\newcommand{\R}{{\mathbb R}}
\newtheorem{Assumption}{Assumption}[section]
\newtheorem{theorem}{Theorem}[section]
\newtheorem{lemma}{Lemma}[section]
\newtheorem{definition}{Definition}[section]
\newtheorem{proposition}{Proposition}[section]
\newtheorem{remark}{Remark}[section]
\begin{document}

	\title{\begin{flushleft} \bf The local Morrey-type space Associated with Ball Quasi-Banach Function Spaces and Application\\[10pt] \rm{\Large Mingwei Shi and Jiang Zhou*} \end{flushleft}}
	\date{\it College of Mathematics and System Sciences, Xinjiang University, Urumqi 830046, China}
	\maketitle
	
	\begin{adjustwidth}{1cm}{1cm}
		\noindent{{\bf Abstract:}{ In this paper, we define for the first time the local Morrey-type space associated with ball quasi-Banach function spaces and show the related series of properties. In addition,  Hardy-Littlewood maximal operator's boundedness is proved.   
	We  investigate nonsmooth decomposition of the local Morrey-type space associated with ball quasi-Banach function spaces via the Hardy local Morrey-type spaces associated with ball quasi-Banach function spaces.
	And we consider  Hardy operator's boundedness. }}\\
		\noindent{{\bf Key Words:} {the local Morrey-type spaces,  ball quasi-Banach function spaces,   Hardy-Littlewood maximal operator, nonsmooth decompositions,  Hardy operator }}	\\
		\noindent{{\bf Mathematics Subject Classification(2010):} 42B20, 42B25, 42B35  }	
	\end{adjustwidth}
	\par

	\baselineskip 15pt
	\section{Introduction}
	\renewcommand{\thefootnote}{}
	\footnotetext{*Corresponding author. The research was supported by National Natural Science Foundation of China (12061069).}
In 1938, Morrey \cite{CB1938} introduced the first study of Morrey spaces $\mathcal{M}_{p}^q(\R^n)$ for partial differential equations. In 1975, D. Adams \cite{DR1975} established that Morrey spaces $\mathcal{L}_{p,\lambda}(\R^n)$ can describe the boundedness property of Riesz potentials. In 2009, Samko \cite{Sn2009} examined the boundedness of  Hardy operator on Morrey spaces $\mathcal{L}_{p,\lambda}(\R^n)$. In  2014, Iida and Sawano \cite{TY2014} studied atomic decomposition for Morrey spaces $\mathcal{M}_{p}^q(\R^n)$. 
\par   In 2004, Burenkov and Guliyev \cite{BV2004} proposed local Morrey-type spaces $LM_{p \theta,w}(\R^n)$, a sufficient and necessary condition for the boundedness of Hardy-littlewood maximal operator was shown on spaces $LM_{p \theta,w}(\R^n)$. In 2010, Burenkov and Nursultanov \cite{BN2010}  description of interpolation for local Morrey spaces $LM_{p, q}^{\lambda}(\R^n)$. In 2011, Burenkov et.al. \cite{BJ2011} studied the boundedness of the Hardy operator on spaces  $LM_{p \theta,w}(\R^n)$. And in 2014,  Batbold and Sawano \cite{BS2014} obtained the decomposition of  spaces $LM_{p, q}^{\lambda}(\R^n)$. In 2017, Guliyev \cite{GH2017} researched the decomposition of  spaces $LM_{p \theta,w}(\R^n)$.
Other related references   \cite{A1,A2,A3,A4,A5,A6}.

The variable exponential Lebesgue spaces $L_{p(\cdot)}(\R)$  first appeared  in 1931 by Orlicz \cite{Wo1931}. In 2008, Kokilashvili and  Meskhi \cite{VA2008} introduced Variable Morrey
Spaces $M_{q(\cdot)}^{p(\cdot)}(X)$  In 2020, the local variable exponential  Morrey space $LM_{u}^{p(\cdot)}(\R^n)$ was proposed by Yee et.al.  \cite{YC2020}. Other related references \cite{ HK2291,HP221}.

Benedek and Panzone \cite{AB1961} proposed mixed Lebesgue spaces $L_{\vec{p}}(\R^n)$ in 1961. Mixed Morrey spaces $M_{\vec{q}}^{p}(\mathbb{R}^n)$ were  introduced by Nogayama \cite{NT2019} in 2019. In 2021, Zhang and Zhou \cite{ZH2021} proposed local mixed Morrey spaces $LM_{\vec{p} \theta,w}(\R^n)$. In 2022,  Shi and Zhou \cite{SJ2022} obtained a sufficient and necessary condition for the Hardy-littlewood maximal operator on spaces $LM_{\vec{p} \theta,w}(\R^n)$. In 2022, Shi and Zhou \cite{SD2022} obtained nonsoomth decompositions of spaces $LM_{\vec{p} \theta,w}(\R^n)$. Other related references \cite{ WM2022}.

In 2017, Sawano et.al. \cite{SS2017}  established Hardy spaces for ball quasi-Banach function spaces $H_X(\R^n)$. In 2019, Zhang et.al. \cite{Z2021} and Wang et.al. \cite{W2021}  established the weak Hardy-type space associated with ball quasi-Banach function space $WH_X(\R^n)$. In 2019, Ho  \cite{HA2019}  proposed Morrey-Banach spaces $M_X^u(\R^n)$. In 2019, Ho  \cite{HK2019} investigated Weak Type Estimates of Singular Integral Operators on spaces $M_X^u(\R^n)$. In 2021, Zhang et.al. \cite{ZYD202} established  Ball Campanato-Type function space $\mathcal{L}_{X,q,d,s}(\R^n)$. In 2022, Ho.\cite{HP2022} obtained the boundedness of some type of  maximal  operators on spaces $M_X^u(\R^n)$.  In 2022, Wang and Zhou \cite{WZ2022} by means of ball quasi-Banach spaces, it is proved that the Calderón-Zygmund singular integral operator is bounded on the generalized Orlicz space $L^{\varphi(\cdot)}(\R^n)$ via a new atomic decomposition. Currently, more and more researchers are solving problems with the help of ball quasi-Banach function spaces. Other more  related references  \cite{H1,H2,H3,H4,H5}.

In this paper, we define the local Morrey-type space associated with ball quasi-Banach function spaces. In Section 2,  some properties of the local Morrey-type space associated with ball quasi-Banach function spaces are derived. And the relationship of local Morrey-type spaces associated with ball quasi-Banach function spaces  to some of the other function  spaces is discussed. In Section 3, the boundedness of Hardy-littlewood maximal operators is obtained on the local Morrey-type space associated with ball quasi-Banach function spaces. In Section 4, an interpolation theorem is proved. In Section 5, vector valued maximal inequalities are obtained on the local Morrey-type space associated with ball quasi-Banach function spaces. In Section 6, predual spaces of  the local Morrey-type space associated with ball quasi-Banach function spaces  are proved. In section 7, the Hardy  local Morrey-type space associated with ball quasi-Banach function spaces are characterized. In section 8, we attain nonsmooth decompositions on local Morrey-type spaces associated with ball quasi-Banach function spaces. In Section 9, the boundedness of Hardy operator is obtained.
\section{Definition and properties}
\begin{definition}\cite{CB1988}
A quasi-Banach space $X \subset \mathcal{M}$ is called a ball quasi-Banach function space if it satisfies

(i) $\|f\|_X=0$ implies that $f=0$ almost everywhere;

(ii) $|g| \leq|f|$ almost everywhere implies that $\|g\|_X \leq\|f\|_X$;

(iii) $0 \leq f_m \uparrow f$ almost everywhere implies that $\left\|f_m\right\|_X \uparrow\|f\|_X$;

(iv) $B \in \mathbb{B}$ implies that $\chi_B \in X$, where $\mathbb{B}:=\{B(x,r): x \in \R^n \text{ and} ~r \in (0, \infty)\}$.

Moreover, a ball quasi-Banach function space $X$ is called a ball Banach function space if the norm of $X$ satisfies the triangle inequality: for all $f, g \in X$,
$$
\|f+g\|_X \leq\|f\|_X+\|g\|_X,
$$
and, for any $B \in \mathbb{B}$, there exists a positive constant $C_{(B)}$, depending on $B$, such that, for all $f \in X$,
$$
\int_B|f(x)| d x \leq C_{(B)}\|f\|_X
$$
\end{definition}

\begin{definition}\cite{CB1988}
	For any ball quasi-Banach function space $X$, the associate space (Köthe dual) $X^{\prime}$ is defined by
	$$
	X^{\prime}:=\left\{f \in \mathcal{M}:\|f\|_{X^{\prime}}:=\sup \left\{\|f g\|_{L^1\left(\mathbb{R}^n\right)}: g \in X,\|g\|_X=1\right\}<\infty\right\}
	$$
\end{definition}
\begin{definition}
	($\mathcal{F}_{N}, \mathcal{M} f(x)$) \cite{GH2017}
	$(i)$ The topology spaces of   $\mathcal{S}\left(\mathbb{R}^{n}\right) $ is defined by the norms  $\left\{\rho_{N}\right\}_{N \in \mathbb{N}} $,
	where $
	\rho_{N}(\varphi) := \sum_{|\alpha| \leq N} \sup _{x \in \mathbb{R}^{n}}(1+|x|)^{N}\left|\partial^{\alpha} \varphi(x)\right| \quad\left(\varphi \in \mathcal{S}\left(\mathbb{R}^{n}\right)\right) .
	$\par
	$$ \mathcal{F}_{N} := \left\{\varphi \in \mathcal{S}\left(\mathbb{R}^{n}\right): \rho_{N}(\varphi) \leq 1\right\}  ~~~\quad  N \in  \mathbb{N}\cup {\{0\}}. $$
	$(ii)$The space $ \mathcal{S}^{\prime}\left(\mathbb{R}^{n}\right) $ is the topological dual of  $\mathcal{S}\left(\mathbb{R}^{n}\right) $.
	\\(iii)Let  $f \in \mathcal{S}^{\prime}\left(\mathbb{R}^{n}\right) $, the grand maximal operator $ \mathcal{M} f$ of  f  is defined by
	$$
	\mathcal{M} f(x):=\mathcal{M}_{N} f(x) := \sup_{ t>0 ~,~ \varphi \in \mathcal{F}_{N}} \left\{\left|t^{-n} \varphi\left(t^{-1} \cdot\right) * f(x)\right|\right\}~~~~\text{for}~~ x \in \mathbb{R}^{n}. 
	$$

\end{definition}

\par 
 $C_{\text{c}}^{\infty}\left(\mathbb{R}^{n}\right)$ denotes the set of all compactly supported infinitely continously differentiable functions, the set of all polynomials of degree less than or equal to  d  is denoted by $ \mathcal{P}_{d}\left(\mathbb{R}^{n}\right) $.
\begin{lemma}\cite{SE1993}
	Let  $f \in \mathcal{S}^{\prime}\left(\mathbb{R}^{n}\right)\cap L^{\mathrm{loc}}_{1}\left(\mathbb{R}^{n}\right), d \in \mathbb{N}\cup {\{0\}}$  and  $j \in \mathbb{Z} $. Then there exist an index set  $K_{j}$ , collections of cubes $ \left\{Q_{j, k}\right\}_{k \in K_{j}}$  and functions  $\left\{\eta_{j, k}\right\}_{k \in K_{j}} \subset C_{\text {c }}^{\infty}\left(\mathbb{R}^{n}\right) $, which are all indexed by $ K_{j}$  for every  j, and a decomposition
	$$
	f=g_{j}+b_{j}, \quad b_{j}=\sum_{k \in K_{j}} b_{j, k},
	$$
	such that the following properties hold.
	\begin{itemize}
		\item[(\romannumeral1)]
		$  g_{j}, b_{j}, b_{j, k} \in \mathcal{S}^{\prime}\left(\mathbb{R}^{n}\right) $.
		\item[(\romannumeral2)]
		Define  $\mathcal{O}_{j} :=\left\{y \in \mathbb{R}^{n}: \mathcal{M} f(y)>2^{j}\right\}$  and consider its Whitney decomposition. Then the cubes  $\left\{Q_{j, k}\right\}_{k \in K_{j}}$  have the bounded intersection property, and
		\begin{equation}\label{88}
		\mathcal{O}_{j}=\bigcup_{k \in K_{j}} Q_{j, k}
		\end{equation}
		\item[(\romannumeral3)]
		Consider the partition of unity  $\left\{\eta_{j, k}\right\}_{k \in K_{j}} $ with respect to  $\left\{Q_{j, k}\right\}_{k \in K_{j}} $. Then each function $ \eta_{j, k}$  is supported in $ Q_{j, k}$  and
		$$
		\sum_{k \in K_{j}} \eta_{j, k}=\chi_{\left\{y \in \mathbb{R}^{n}: \mathcal{M} f(y)>2^{j}\right\}}, \quad 0 \leq \eta_{j, k} \leq 1 .
		$$
		\item[(\romannumeral4)] 
		$ g_{j}$  is an  $L^{\infty}\left(\mathbb{R}^{n}\right)$-function satisfying  $\left\|g_{j}\right\|_{L^{\infty}} \leq 2^{-j}.$
		\item[(\romannumeral5)]
		Each distribution $ b_{j, k}$  is given by  $b_{j, k}=\left(f-c_{j, k}\right) \eta_{j, k}$  with a certain polynomial $ c_{j, k} \in \mathcal{P}_{d}\left(\mathbb{R}^{n}\right) $ satisfying
		$$
		\left\langle f-c_{j, k}, \eta_{j, k} \cdot P\right\rangle=0 \text { for all } q \in \mathcal{P}_{d}\left(\mathbb{R}^{n}\right)
		$$
		and
		$$
		\mathcal{M} b_{j, k}(x) \lesssim \mathcal{M} f(x) \chi Q_{j, k}(x)+2^{j} \frac{\ell_{j, k}^{n+d+1}}{\left|x-x_{j, k}\right|^{n+d+1}} \chi_{\mathbb{R}^{n} \backslash Q_{j, k}}(x)
		$$
		$\text { for all } x \in \mathbb{R}^{n} \text {. }$
	\end{itemize}
	In the above, $ x_{j, k}$  and $ \ell_{j, k}$  denote the center and the edge-length of $ Q_{j, k} $.
\end{lemma}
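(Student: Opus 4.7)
The plan is to follow the classical Calder\'on--Zygmund--Stein scheme relative to a level set of the grand maximal operator, adapted to tempered distributions. First I would set $\mathcal{O}_j:=\{y\in\R^n:\mathcal{M}f(y)>2^j\}$; since $\mathcal{M}f$ is lower semicontinuous, $\mathcal{O}_j$ is open. If it is empty, take $g_j=f$ and $b_j=0$; otherwise I would apply the Whitney cube decomposition to $\mathcal{O}_j$, producing $\{Q_{j,k}\}_{k\in K_j}$ with $\mathcal{O}_j=\bigcup_k Q_{j,k}$, $\ell_{j,k}\approx\operatorname{dist}(Q_{j,k},\mathcal{O}_j^c)$, and bounded overlap after a mild dilation. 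A standard smooth partition of unity subordinate to these cubes yields $\{\eta_{j,k}\}\subset C_c^\infty(\R^n)$ with $\sum_k\eta_{j,k}=\chi_{\mathcal{O}_j}$ and $0\le\eta_{j,k}\le 1$, so (ii) and (iii) are immediate from the construction.

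Next I would define $c_{j,k}\in\mathcal{P}_d(\R^n)$ by the orthogonality condition. Since $\mathcal{P}_d$ is finite-dimensional and $\langle P,Q\rangle_{j,k}:=\int P\overline{Q}\,\eta_{j,k}\,dx$ is a genuine inner product on it (because $\eta_{j,k}>0$ on an open set of positive measure), and since $P\mapsto\langle f,\eta_{j,k}P\rangle$ is a well-defined linear functional on $\mathcal{P}_d$ (using $f\in L^1_{\mathrm{loc}}$ together with the compact support of $\eta_{j,k}$), the Riesz representation theorem produces a unique $c_{j,k}$ satisfying $\langle f-c_{j,k},\eta_{j,k}P\rangle=0$ for every $P\in\mathcal{P}_d$. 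I then set $b_{j,k}:=(f-c_{j,k})\eta_{j,k}$, $b_j:=\sum_k b_{j,k}$ and $g_j:=f-b_j$; the sum converges in $\mathcal{S}'(\R^n)$ thanks to the bounded overlap of supports combined with the uniform bound $\|c_{j,k}\|_{L^\infty(Q_{j,k})}\lesssim 2^j$, which I would obtain by choosing a comparison point in a slight dilate of $Q_{j,k}$ lying outside $\mathcal{O}_j$ and at which $\mathcal{M}f\le 2^j$. For (iv), on $\R^n\setminus\mathcal{O}_j$ one has $g_j=f$ and Lebesgue differentiation gives $|f|\le\mathcal{M}f\le 2^j$ almost everywhere, while on $\mathcal{O}_j$ the representation $g_j=\sum_\ell c_{j,\ell}\eta_{j,\ell}$ together with bounded overlap reduces the bound to $\max_\ell\|c_{j,\ell}\|_\infty\lesssim 2^j$.

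The technical heart of the argument, and the step I expect to be the main obstacle, is the pointwise decay of $\mathcal{M}b_{j,k}$ in (v). For $x\in Q_{j,k}$ the estimate $\mathcal{M}b_{j,k}(x)\lesssim\mathcal{M}f(x)$ follows from the splitting $b_{j,k}=f\eta_{j,k}-c_{j,k}\eta_{j,k}$ together with the uniform bound on $c_{j,k}$ and standard pointwise control of $\mathcal{M}$. For $x\notin Q_{j,k}$ I would fix $t>0$ and $\varphi\in\mathcal{F}_N$ with $N\ge n+d+1$, then Taylor-expand $\psi_{t,x}(y):=t^{-n}\varphi(t^{-1}(x-y))$ in the variable $y$ about $x_{j,k}$ to order $d$. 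The polynomial part of this expansion is annihilated by $(f-c_{j,k})\eta_{j,k}$ thanks to the vanishing-moment property built into $c_{j,k}$, so only the Taylor remainder contributes; estimating that remainder by $\rho_N(\varphi)\,\ell_{j,k}^{d+1}|x-x_{j,k}|^{-n-d-1}$ (accounting for the size of the derivatives of $\psi_{t,x}$), and combining with $\int\eta_{j,k}\lesssim\ell_{j,k}^n$ and $|c_{j,k}|\lesssim 2^j$, then yields the claimed tail bound. The delicate point is to control both the small-$t$ and large-$t$ regimes uniformly in $\varphi\in\mathcal{F}_N$, which is exactly where the Whitney relation $\ell_{j,k}\lesssim|x-x_{j,k}|$ and the finite order of the expansion interact to produce the stated decay rate.
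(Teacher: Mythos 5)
The paper does not prove this lemma at all: it is imported verbatim, with citation, from Stein's book \cite{SE1993} (it is the Calder\'on--Zygmund decomposition of a tempered distribution relative to a level set of the grand maximal function, used later in Section 8), so there is no in-paper argument to compare yours against. Your sketch is the standard Stein construction and is correct in outline: Whitney decomposition of the open set $\mathcal{O}_j$, subordinate partition of unity, definition of $c_{j,k}$ as the orthogonal projection of $f$ onto $\mathcal{P}_d(\mathbb{R}^n)$ in the inner product weighted by $\eta_{j,k}$, and the Taylor-expansion/moment-cancellation argument for the tail of $\mathcal{M}b_{j,k}$. (You also silently correct the misprint in item (iv): the bound should read $\|g_j\|_{L^\infty}\lesssim 2^{j}$, not $\le 2^{-j}$.)

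One step of your outline is stated in a way that would not survive being written out. In the tail estimate for $\mathcal{M}b_{j,k}$ you propose to bound the contribution of the Taylor remainder by combining a derivative bound on $\psi_{t,x}$ with ``$\int\eta_{j,k}\lesssim\ell_{j,k}^n$ and $|c_{j,k}|\lesssim 2^j$''. That disposes of the $c_{j,k}\eta_{j,k}$ part but not of the $f\eta_{j,k}$ part: $f$ is merely locally integrable, and inside $\mathcal{O}_j$ you have no pointwise or averaged bound of the form $\int_{Q_{j,k}}|f|\,\eta_{j,k}\,dx\lesssim 2^j\ell_{j,k}^n$ at your disposal. The correct (and standard) move is the same device you already invoke to bound $c_{j,k}$: after subtracting the Taylor polynomial, the function $y\mapsto\eta_{j,k}(y)\bigl(\psi_{t,x}(y)-T^{d}_{x_{j,k}}\psi_{t,x}(y)\bigr)$, suitably normalized, lies in $\mathcal{F}_N$ rescaled to a ball of radius comparable to $|x-x_{j,k}|$ centered near $Q_{j,k}$, so its pairing with $f$ is controlled by $\mathcal{M}f(\bar y)\le 2^j$ at a point $\bar y\in\mathcal{O}_j^{c}$ within distance $\approx\ell_{j,k}$ of $Q_{j,k}$. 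The same routing is needed for the interior bound $\mathcal{M}(f\eta_{j,k})(x)\lesssim\mathcal{M}f(x)$ on $Q_{j,k}$, which you also pass over as ``standard pointwise control of $\mathcal{M}$''. With those two steps made explicit, your outline coincides with the proof in \cite{SE1993}.
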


\begin{definition} \cite{AC1961} (Lebesgue spaces)
	Let $ 0<p<\infty,$ the lebesgue space $L_p$ is defined to be the set of all measurable function f on $\R^n$ such that 
	$$
	\|f\|_{L_p(\R^n)}:=\left(\int_{\R^n} f(x)^p dx\right)^{\frac{1}{p}}
	$$
\end{definition}
\begin{definition}\cite{AB1961}
	(Mixed Lebesgue spaces) Let $\vec{p}=\left(p_1, \ldots, p_n\right) \in(0, \infty]^n$. Then define the mixed Lebesgue norm $\|\cdot\|_{L_{\vec{p}}}$ by
	$$
	\begin{aligned}
	\|f\|_{L_{\vec{p}}} := \left(\int_{\mathbb{R}} \cdots\left(\int_{\mathbb{R}}\left(\int_{\mathbb{R}}\left|f\left(x_1, x_2, \ldots, x_n\right)\right|^{p_1} \mathrm{~d} x_1\right)^{\frac{p_2}{p_1}} \mathrm{~d} x_2\right)^{\frac{p_3}{p_2}} \cdots \mathrm{d} x_n\right)^{\frac{1}{p_n}}
	\end{aligned}
	$$
\end{definition}

\begin{definition}\cite{Wo1931} (Variable Lebesgue spaces)
	Let $p(\cdot): \mathbb{R}^n \rightarrow[0, \infty)$ be a measurable function. Then the variable Lebesgue space $L_{p(\cdot)}\left(\mathbb{R}^n\right)$ is defined to be the set of all measurable functions $f$ on $\mathbb{R}^n$ such that
	$$
	\|f\|_{L_{p(\cdot)}\left(\mathbb{R}^n\right)}:=\inf \left\{\lambda \in(0, \infty): \int_{\mathbb{R}^n}[|f(x)| / \lambda]^{p(x)} d x \leq 1\right\}<\infty
	$$
\end{definition}

\begin{definition} (the local Morrey-type space associated with ball quasi-Banach function spaces)
	Let $\lambda \geq 0  $ and $ 0 <q \leq \infty .$ We denote the local Morrey space associated with ball quasi-Banach function spaces $LM_{X,q}^{\lambda}$, where X is the  ball quasi-Banach function space. For any functions $f \in L^{loc}_{1}$, we say $f\in LM^{\lambda}_{X,q}$ when the quasi-norms
$$
\|f\|_{L M_{X, q}^{\lambda}}=\left(\int_{0}^{\infty}\left(r^{-\lambda}\|f \chi_{B(0,r)}\|_{X}\right)^{q} \frac{d r}{r}\right)^{\frac{1}{q}}<\infty.
$$
\end{definition}
\begin{definition}(Morrey-Banach spaces) \cite{HA2019}
 Let $X$ be a rearrangement-invariant Banach function
 space and $u:(0, \infty) \rightarrow(0, \infty)$ be a Lebesgue measurable function. A measurable function f   belongs to the rearrangement-invariant Morrey spaces (Morrey-Banach spaces) $\mathcal{M}_X^u$ if it satisfies
$$
\|f\|_{\mathcal{M}_X^u}=\sup _{x_0 \in \mathbb{R}^n, r>0} \frac{1}{u(r)}\left\|\chi_{B\left(x_0, r\right)} f\right\|_X<\infty
$$
\end{definition}
\begin{remark} 
	Let  $q=\infty$  and   $\lambda \geq 0  $ , return to  the Morrey-Banach spaces.
$$
\|f\|_{M_{X}^{\lambda}}=\mathop{sup}\limits_{x \in \R^n} \| f(\cdot + x) \|_{LM_{X,\infty}^\lambda}=\mathop{sup}\limits_{x \in \R^n} \mathop{sup}\limits_{r>0}  r^{-\lambda} \|f \chi_{B(x,r)}\|_{X}
<\infty.
$$
\end{remark}

\begin{definition}(quasinorm derived from the  ball quasi-Banach function space)
For $f \in L^{loc}_{1}$, we consider the quasinorm where X is the  ball quasi-Banach function space.
$$
\begin{aligned}
\|f\|_{\widetilde{X}}&=|B(0,r)| \mathop{sup}\limits_{\rho \geq r} |B(0,\rho)|^{-1} \|f \chi_{B(0,\rho)}\|_{X}\\
&=r^n \mathop{sup}\limits_{\rho \geq r} \rho ^{-n} \|f \chi_{B(0,\rho)}\|_{X}.
\end{aligned}
$$
\end{definition}
\begin{definition}(quasinorm derived from the local Morrey-type space associated with ball quasi-Banach function spaces)
	Let   $\lambda \geq 0  $, $ 0 <q \leq \infty ,$ and $f \in L^{loc}_{1}$, we consider quasinorm where X is the  ball quasi-Banach function space.
	$$
	\|f\|_{\widetilde{L M}_{X, q}^{\lambda}}=\left(\int_{0}^{\infty}\left(r^{-\lambda} \|f \chi_{B(0,\rho)}\|_{\widetilde{X}}\right)^{q} \frac{d r}{r}\right)^{\frac{1}{q}}<\infty.
	$$
\end{definition}
\begin{definition}\cite{BS2014} (the heat kernel)
	Let $ t>0 $ and  $f \in \mathcal{S}^{\prime}\left(\mathbb{R}^{n}\right) $. The heat kernel is defined by  
	$$
	e^{t \Delta} f(x) :=\left\langle f, \frac{1}{\sqrt{(4 \pi t)^{n}}} \exp \left(-\frac{|x-\cdot|^{2}}{4 t}\right)\right\rangle \quad x \in \mathbb{R}^{n} .
	$$
\end{definition}
\begin{definition}
	(the Hardy local Morrey-type spaces associated with ball quasi-Banach function spaces) Let  $\lambda \geq 0  $, $ 0 <q \leq \infty .$ and X is the  ball quasi-Banach function space.
	The Hardy local Morrey-type spaces associated with ball quasi-Banach function spaces $H  L M_{X, q}^{\lambda}\left(\mathbb{R}^{n}\right)$ collects all $f \in \mathcal{S}^{\prime}\left(\mathbb{R}^{n}\right)$ such that $\sup _{t>0}\left|e^{t \Delta} f\right| \in  L M_{X, q}^{\lambda}\left(\mathbb{R}^{n}\right)$.
	$$
	\|f\|_{H  L M_{X, q}^{\lambda}} :=\left\|\sup _{t>0}\left|e^{t \Delta} f\right|\right\|_{ L M_{X, q}^{\lambda}} < \infty.
	$$
\end{definition}
\begin{proposition}
	For  $\lambda \geq 0  $ , $ 0 <q,q_0,q_1 \leq \infty ,$ $f \in L^{loc}_{1}$ and X is the  ball quasi-Banach function space. If $q_0<q_1$, then $L M_{X, q_0}^{\lambda}\subset L M_{X, q_1}^{\lambda}$ and $\widetilde{L M}_{X, q_0}^{\lambda}\subset \widetilde{L M}_{X, q_1}^{\lambda}$.
\end{proposition}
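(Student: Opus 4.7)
The approach is to isolate the scalar function $F(r):=r^{-\lambda}\|f\chi_{B(0,r)}\|_X$, so that $\|f\|_{LM_{X,q}^\lambda}=\|F\|_{L^q((0,\infty),dr/r)}$, and to exploit the monotonicity in $r$ forced by property (ii) of the ball quasi-Banach function space $X$. Specifically, $r\mapsto\|f\chi_{B(0,r)}\|_X$ is non-decreasing, so for every $\rho\geq r>0$,
$$F(\rho)=\rho^{-\lambda}\|f\chi_{B(0,\rho)}\|_X\geq \rho^{-\lambda}\|f\chi_{B(0,r)}\|_X=(r/\rho)^{\lambda}F(r).$$
Assuming first $q_0<\infty$, raising to the power $q_0$, integrating in $\rho$ over the dyadic interval $[r,2r]$ against $d\rho/\rho$, and pulling the positive absolute constant $\int_r^{2r}(r/\rho)^{\lambda q_0}\,d\rho/\rho$ (which equals $(1-2^{-\lambda q_0})/(\lambda q_0)$ when $\lambda>0$ and $\log 2$ when $\lambda=0$) to the other side yields the uniform-in-$r$ bound
$$\sup_{r>0}F(r)\leq C(\lambda,q_0)\,\|f\|_{LM_{X,q_0}^\lambda}.$$

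This already settles the embedding when $q_1=\infty$. For $q_0<q_1<\infty$ I would split $F^{q_1}=F^{q_0}\cdot F^{q_1-q_0}$ and pull out the supremum, obtaining
$$\|f\|_{LM_{X,q_1}^\lambda}^{q_1}\leq \Bigl(\sup_{r>0}F(r)\Bigr)^{q_1-q_0}\|f\|_{LM_{X,q_0}^\lambda}^{q_0}\leq C^{q_1-q_0}\|f\|_{LM_{X,q_0}^\lambda}^{q_1},$$
which is the desired inclusion $LM_{X,q_0}^\lambda\subset LM_{X,q_1}^\lambda$.

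The tilde version follows the same scheme after one preliminary identity. Writing $G(r):=\sup_{\rho\geq r}\rho^{-n}\|f\chi_{B(0,\rho)}\|_X$, which is clearly non-increasing in $r$, we have $\|f\chi_{B(0,r)}\|_{\widetilde X}=r^{n}G(r)$, and hence $\widetilde F(r):=r^{-\lambda}\|f\chi_{B(0,r)}\|_{\widetilde X}=r^{n-\lambda}G(r)$. For $s\in[r/2,r]$ the monotonicity of $G$ gives $G(s)\geq G(r)$, and a short check in the two cases $n\geq\lambda$ and $n<\lambda$ produces a pointwise bound $\widetilde F(s)\geq c_{n,\lambda}\,\widetilde F(r)$ with $c_{n,\lambda}:=2^{-\max(0,\,n-\lambda)}>0$. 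Integrating against $ds/s$ over $[r/2,r]$ then yields $\sup_{r>0}\widetilde F(r)\lesssim\|f\|_{\widetilde{LM}_{X,q_0}^\lambda}$, after which the same splitting as above completes the proof of $\widetilde{LM}_{X,q_0}^\lambda\subset\widetilde{LM}_{X,q_1}^\lambda$.

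The main obstacle, as I see it, is that the weights $r^{-\lambda}$ (respectively $r^{n-\lambda}$) obscure the monotonicity of the underlying norm (respectively of $G$); they introduce a competing factor whose sign depends on that of $\lambda$ (respectively of $n-\lambda$), and in general $F$ and $\widetilde F$ are not monotone in $r$. Restricting to a single dyadic scale $[r,2r]$ or $[r/2,r]$ is what decouples the weight from the monotone factor, and reduces everything to the purely scalar embedding $L^{q_0}((0,\infty),dr/r)\cap L^\infty\subset L^{q_1}((0,\infty),dr/r)$.
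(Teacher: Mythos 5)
Your proposal is correct and follows essentially the same route as the paper: both proofs first establish the embedding into $LM_{X,\infty}^{\lambda}$ by exploiting the monotonicity of $r\mapsto\|f\chi_{B(0,r)}\|_X$ (resp.\ of $\sup_{\rho\ge r}\rho^{-n}\|f\chi_{B(0,\rho)}\|_X$), and then pass to finite $q_1$ via the splitting $F^{q_1}=F^{q_0}F^{q_1-q_0}$, which is exactly the paper's interpolation inequality. The only difference is technical: you integrate the monotone quantity over a single dyadic interval rather than over $(r,\infty)$ (resp.\ $(0,r)$), which has the minor advantage of also covering the endpoint $\lambda=0$ (and, in the tilde case, avoiding the implicit restriction $\lambda<n$) where the paper's normalizing constants $(\lambda q_0)^{1/q_0}$ and $((n-\lambda)q_0)^{1/q_0}$ degenerate.
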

\begin{proof}
1. First, suppose that  $q_{1}=\infty $, then
	$$
	\begin{aligned}
	\|f\|_{L M_{X, \infty}^{\lambda}} &=\sup _{r>0} r^{-\lambda} \|f \chi_{B(0,r)}\|_{X}\\
	&=\left(\lambda q_{0}\right)^{\frac{1}{q_{0}}} \sup _{r>0}\left(\int_{r}^{\infty} t^{-\lambda q_{0}} \frac{d t}{t}\right)^{\frac{1}{q_{0}}} \|f \chi_{B(0,r)}\|_{X} \\
	&\leq\left(\lambda q_{0}\right)^{\frac{1}{q_{0}}} \sup _{r>0}\left(\int_{r}^{\infty}\left(t^{-\lambda} \|f \chi_{B(0,t)}\|_{X}\right)^{q_{0}} \frac{d t}{t}\right)^{\frac{1}{q_{0}}}\\&=\left(\lambda q_{0}\right)^{\frac{1}{q_{0}}}\|f\|_{L M^{\lambda}_{X, q_{0}}} 
	\end{aligned}
	$$
	If  $q_{1}<\infty $, then it suffices to apply the interpolation inequality
	$$
	\begin{aligned}
	\|f\|_{L M_{X, q_{1}}^{\lambda}} &\lesssim\|f\|_{L M_{X, \infty}^{\lambda}}^{1-\frac{q_{0}}{q_{1}}}\|f\|_{L M_{X, q_{0}}^{\lambda}}^{\frac{q_{0}}{q_{1}}}\\
	& \leq \left(\left(\lambda q_{0}\right)^{\frac{1}{q_{0}}}\|f\|_{L M_{X, q_{0}}} \right)^{1-\frac{q_{0}}{q_{1}}} \|f\|_{L M_{X, q_{0}}^{\lambda}}^{\frac{q_{0}}{q_{1}}}\\
	&\leq \left(\lambda q_{0}\right)^{\frac{1}{q_{0}}-\frac{1}{q_{1}}}\|f\|_{L M_{X, q_{0}}^{\lambda}}
	\end{aligned}
	$$
	2. First, suppose that  $q_{1}=\infty $, then	
	$$
	\begin{aligned}
	\|f\|_{{\widetilde{L M}}_{X, \infty}^\lambda}
	&=\sup_{r>0} r^{-\lambda} r^n\sup _{\rho \geq r} \rho^{-n}  \|f \chi_{B(0,t)}\|_{X}\quad ~~~\quad~~~\quad~~~  \quad~~~ \quad~~~
			\end{aligned}
	$$
	$$
	\begin{aligned}
	\quad~~~ \quad~~~ \quad~~~  \quad \quad ~~~
	&=\left( (n-\lambda) q_0 \right)^{\frac{1}{q_0}}
		V_n \sup_{r>0} 
		\left( \int_0^r s^{(n-\lambda)q_0}\frac{ds}{s}\right)^{\frac{1}{q_0}} \sup _{\rho \geq r}\rho^{-n}  \|f \chi_{B(0,\rho)}\|_{X}
	\\&\leq \left( (n-\lambda) q_0 \right)^{\frac{1}{q_0}}
	V_n \sup_{r>0} \left( \int_0^r \left( s^{(n-\lambda)} \sup _{\rho \geq r}\rho^{-n}  \|f \chi_{B(0,t)}\|_{X}\right)^{q_0}\frac{ds}{s}\right)^{\frac{1}{q_0}}
	\\&=\left( (n-\lambda) q_0 \right)^{\frac{1}{q_0}} 	\|f\|_{{\widetilde{L M}}_{X, q_0}^\lambda}
	\end{aligned}
	$$
	If $  q_{1}<\infty $, it is proved in a similar way as above.	
\end{proof}	

\begin{remark} \cite{W2021}
	$L_p(\R^n)$, $L_{\vec{p}}\left(\mathbb{R}^n\right)$, $L_{p(\cdot)}\left(\mathbb{R}^n\right)$ are proven to be  ball quasi-Banach function spaces
\end{remark}

\begin{remark}

In the rest of the article, since the properties and applications of $L M_{X, q_{0}}^{\lambda}(\R^n)$ will be discussed when X  is $L_p(\R^n)$, $L_{\vec{p}}(\R^n)$, $L_{p(\cdot)}(\R^n)$, all satisfied $\sigma:= log_r  \|\chi_{B(0,r)}\|_{X} $ and X is the  ball quasi-Banach function space. 

\end{remark}
\begin{remark}
When X is $L_p$, for  $L M_{X, q}^{\lambda}$ back to  local Morrey-type spaces \cite{BN2010} $L M_{p, q}^{\lambda}$. In particular, because of Remark 2.5, we also obtain the norm equivalence of $L M_{p, q}^{\lambda}$ and homogeneous  Herz spaces  \cite{LS2008} $\dot{K}_{q}^{\alpha, p}(\mathbb{R}^n)$.

When X is $L_{\vec{p}}$, for  $L M_{X, q}^{\lambda}$ back to  local mixed Morrey Spaces \cite{ZH2021} $L M_{\vec{p}, q}^{\lambda}$. In particular, because of Remark 2.5, we also obtain the norm equivalence of $L M_{\vec{p}, q}^{\lambda}$ and homogeneous  Mixed Herz spaces  \cite{MW2021} $\dot{K}_{\vec{q}}^{\alpha, p}(\mathbb{R}^n)$.

When X is $L_{p(\cdot)}$, for  $L M_{X, q}^{\lambda}$ obtains local Morrey-type spaces with variable exponents $L M_{{p(\cdot)}, q}^{\lambda}$.
In particular, because of Remark 2.5, we also obtain the norm equivalence of $L M_{{p(\cdot)}, q}^{\lambda}$ and homogeneous  variable exponents Herz spaces  \cite{AA2012} $\dot{K}_{q(\cdot)}^{\alpha, p}(\mathbb{R}^n) ~~(\alpha(\cdot)=\alpha)$.
\end{remark}
\begin{proposition}
	Let $  0<q \leq \infty $,  $0 \leq \lambda <\sigma   $. Then  $L M_{X, q}^{\lambda}\left(\mathbb{R}^{n}\right) \hookrightarrow   \mathcal{S}^{\prime}\left(\mathbb{R}^{n}\right)$  in the sense of continuous embedding.
\end{proposition}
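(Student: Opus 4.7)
The plan is to show that any $f\in LM_{X,q}^\lambda$, which is by definition locally integrable, defines a tempered distribution via the pairing $\langle f,\varphi\rangle:=\int_{\mathbb{R}^n} f\varphi\,dx$, and that this assignment is continuous. Concretely I will produce, for a sufficiently large $N\in\mathbb{N}$, an estimate of the form
$$|\langle f,\varphi\rangle|\lesssim \rho_N(\varphi)\,\|f\|_{LM_{X,q}^\lambda}\qquad (\varphi\in\mathcal{S}(\mathbb{R}^n)),$$
from which both well-definedness in $\mathcal{S}'(\mathbb{R}^n)$ and the continuity of the embedding follow.

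The first step is a pointwise bound on the local norm: I claim that for every $r>0$,
$$\|f\chi_{B(0,r)}\|_X\lesssim r^{\lambda}\,\|f\|_{LM_{X,q}^\lambda}.$$
For $q=\infty$ this is immediate from the definition. For $q<\infty$ I use monotonicity of $r\mapsto\|f\chi_{B(0,r)}\|_X$ (which is a consequence of property (ii) in the definition of a ball quasi-Banach function space), restricting the defining integral to $[r,2r]$:
$$\|f\|_{LM_{X,q}^\lambda}^q\geq \|f\chi_{B(0,r)}\|_X^q\int_r^{2r}t^{-\lambda q-1}\,dt,$$
and the remaining integral is a constant multiple of $r^{-\lambda q}$ (or $\log 2$ when $\lambda=0$).

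The second step converts this into an $L^1$ bound on balls. Using the generalized H\"older inequality $\|fg\|_{L^1}\leq\|f\|_X\|g\|_{X'}$ built into the definition of the associate space $X'$, together with the fact (provided by Remark~2.5 applied to the concrete spaces $L^p$, $L^{\vec p}$, $L^{p(\cdot)}$ under consideration) that $\|\chi_{B(0,r)}\|_X\sim r^{\sigma}$ and, dually, $\|\chi_{B(0,r)}\|_{X'}\sim r^{n-\sigma}$, I obtain
$$\int_{B(0,r)}|f(x)|\,dx\leq \|f\chi_{B(0,r)}\|_X\|\chi_{B(0,r)}\|_{X'}\lesssim r^{\,n+\lambda-\sigma}\,\|f\|_{LM_{X,q}^\lambda}.$$

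For the final step I use the standard Schwartz decay $|\varphi(x)|\leq \rho_N(\varphi)(1+|x|)^{-N}$ and split the integral $\int_{\mathbb{R}^n}|f\varphi|\,dx$ into $B(0,1)$ and the dyadic annuli $A_k=\{2^k\leq |x|<2^{k+1}\}$, $k\geq 0$. On $B(0,1)$ the previous step gives an absolute bound by $\rho_N(\varphi)\|f\|_{LM_{X,q}^\lambda}$, while on $A_k\subset B(0,2^{k+1})$ it yields
$$\int_{A_k}|f\varphi|\,dx\lesssim \rho_N(\varphi)\,2^{-kN}\cdot 2^{(k+1)(n+\lambda-\sigma)}\,\|f\|_{LM_{X,q}^\lambda}\lesssim \rho_N(\varphi)\,2^{k(n+\lambda-\sigma-N)}\,\|f\|_{LM_{X,q}^\lambda}.$$
Summability over $k\geq 0$ holds precisely when $N>n+\lambda-\sigma$, and since the hypothesis $\lambda<\sigma$ gives $n+\lambda-\sigma<n$, any sufficiently large $N$ works. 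The main obstacle I anticipate is the second step: ensuring the Hölder-type inequality and the $X'$-measurement of $\chi_{B(0,r)}$ are justified in the ball quasi-Banach setting; the strict gap $\lambda<\sigma$ is exactly what allows the geometric series in the final step to converge, so if either the exponent $\sigma$ were smaller or $\chi_{B(0,r)}\notin X'$, the argument would break down.
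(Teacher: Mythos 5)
Your argument is correct under the hypotheses you flag, but it takes a genuinely different route from the paper. The paper does not touch the associate space at all: it invokes the boundedness of the Hardy--Littlewood maximal operator on $LM_{X,q}^{\lambda}$ (a forward reference to Section 3), deduces from $\left\|\chi_{B(0,1)}Mf\right\|_{LM_{X,q}^{\lambda}}\lesssim\|f\|_{LM_{X,q}^{\lambda}}$ the crude bound $\int_{B(0,R)}|f|\lesssim R^{n}\|f\|_{LM_{X,q}^{\lambda}}$, and then closes the annulus sum using the full Schwartz weight $(1+|x|)^{2n+1}$, so that $\sum_{j}j^{-2n-1}(j+1)^{n}<\infty$ needs no information about $\lambda$ or $\sigma$ beyond what makes $M$ bounded. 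You instead prove the sharper local bound $\int_{B(0,r)}|f|\lesssim r^{\,n+\lambda-\sigma}\|f\|_{LM_{X,q}^{\lambda}}$ directly from the monotonicity trick in Step 1 plus H\"older against $X'$, which is more elementary and avoids the circular ordering of relying on Section 3 before it is proved. The trade-off is exactly the one you identify: your Step 2 needs $\chi_{B(0,r)}\in X'$ with $\|\chi_{B(0,r)}\|_{X'}\sim r^{\,n-\sigma}$, and for genuinely quasi-Banach examples such as $X=L^{p}$ with $0<p<1$ the associate space is trivial and $\|\chi_{B(0,r)}\|_{X'}=\infty$, so your proof only covers the Banach-scale instances of Remark~2.6; the paper's proof instead silently requires $X\in\mathbb{M}$ (Assumption~2.1), which is likewise absent from the statement. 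Neither route is strictly more general, and within the regime where the duality $\|fg\|_{L^{1}}\leq\|f\|_{X}\|g\|_{X'}$ and the norm asymptotics of $\chi_{B(0,r)}$ in $X'$ hold, your estimate chain (Steps 1--3, with $N>n+\lambda-\sigma$) is complete and correct.
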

\begin{proof}
	Denote by $\mathcal{B}_{x}$ the set of all open balls in $\mathbb{R}^{n}$ which contain $x$. 
	The Hardy-Littlewood maximal operator $M$ is bounded on $L M_{X, q}^{\lambda}\left(\mathbb{R}^{n}\right)$ (As demonstrated in Section 3 of this paper). Therefore,
	$$
	\frac{\alpha}{|B(R)|} \int_{B(R)}|f(y)| d y \leq\left\|\chi_{B(1)} M f\right\|_{L M_{X, q}^{\lambda}\left(\mathbb{R}^{n}\right)} \leq\|M f\|_{L M_{X, q}^{\lambda}\left(\mathbb{R}^{n}\right)} \lesssim \|f\|_{L M_{X, q}^{\lambda}\left(\mathbb{R}^{n}\right)},
	$$
	where $\alpha \equiv\left\|\chi_{B(1)}\right\|_{L M_{X, q}^{\lambda}\left(\mathbb{R}^{n}\right)}$. Then for all $\kappa \in \mathcal{S} (\R^n)$ and $f \in L M_{X, q}^{\lambda}\left(\mathbb{R}^{n}\right) $
	$$
	\begin{aligned}
	\int_{\mathbb{R}^{n}}|\kappa(x) f(x)| d x &=\int_{B(1)}|\kappa(x) f(x)| d x+\sum_{j=1}^{\infty} \int_{B(j+1) \backslash B(j)}|\kappa(x) f(x)| d x \\
	& \leq\|\kappa\|_{L^{\infty}(B(1))}\|f\|_{L^{1}(B(1))}+\sum_{j=1}^{\infty} \int_{B(j+1) \backslash B(j)} \frac{|x|^{2 n+1}}{j^{2 n+1}}|\kappa(x) f(x)| d x \\
	& \lesssim \|f\|_{L M_{X, q}^{\lambda}\left(\mathbb{R}^{n}\right)}\left(\sup _{x \in \mathbb{R}^{n}}(1+|x|)^{2 n+1}|\kappa(x)|\right)
	\end{aligned}
	$$
\end{proof}

\begin{Assumption}
If the Hardy-Littlewood maximal operator  $M$ 
is bounded from $X \left(\mathbb{R}^{n}\right)$ to $X \left(\mathbb{R}^{n}\right)$, then $X \in \mathbb{M}$.
\end{Assumption}
\begin{proposition}
The local Morrey space associated with ball quasi-Banach function spaces $L M_{X, q}$ is called a ball quasi-Banach function space.
\end{proposition}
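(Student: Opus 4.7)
The plan is to verify in turn the four axioms of Definition 2.1 for the space $LM_{X,q}^{\lambda}$, assuming the range $0\le\lambda<\sigma$, $0<q\le\infty$ used elsewhere in the paper. The quasi-Banach structure (the quasi-triangle inequality and completeness) will follow routinely from the corresponding properties of $X$ together with those of the weighted Lebesgue space $L^{q}((0,\infty),dr/r)$, so the real work sits in the four lattice-type axioms; of these, only the last is substantive.

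Axioms (i), (ii), (iii) should reduce almost mechanically to the matching axioms of $X$. For (i), if $\|f\|_{LM_{X,q}^{\lambda}}=0$ then $\|f\chi_{B(0,r)}\|_{X}=0$ for almost every $r>0$, so axiom (i) of $X$ gives $f\chi_{B(0,r)}=0$ a.e.\ for such $r$, and letting $r\to\infty$ along an exhausting sequence yields $f=0$ a.e. For (ii), the pointwise inequality $|g|\le|f|$ localises to each ball $B(0,r)$ and is pushed through the $X$-norm by axiom (ii) of $X$; the resulting pointwise (in $r$) inequality between norms is preserved by the outer $L^{q}(dr/r)$-integral. For (iii), $0\le f_m\uparrow f$ restricted to $B(0,r)$ combined with axiom (iii) of $X$ gives pointwise monotone convergence of $r\mapsto\|f_m\chi_{B(0,r)}\|_{X}$ to $r\mapsto\|f\chi_{B(0,r)}\|_{X}$, and a second application of the monotone convergence theorem (or a straightforward supremum computation when $q=\infty$) in the outer variable delivers $\|f_m\|_{LM_{X,q}^{\lambda}}\uparrow\|f\|_{LM_{X,q}^{\lambda}}$.

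The main obstacle will be axiom (iv): one must prove $\chi_{B}\in LM_{X,q}^{\lambda}$ for every $B=B(x_{0},R)\in\mathbb{B}$. The approach is to split the defining integral at the scale $R_{0}:=|x_{0}|+R$, separating the regime $B\cap B(0,r)\subsetneq B$ (small $r$) from $B\subset B(0,r)$ (large $r$). For $r\ge R_{0}$ the inner norm collapses to the constant $\|\chi_{B}\|_{X}$, so finiteness of that contribution reduces to integrability of $r^{-\lambda q}$ at infinity, which holds because $\lambda>0$. For $r<R_{0}$ I would dominate $\|\chi_{B}\chi_{B(0,r)}\|_{X}\le\|\chi_{B(0,r)}\|_{X}$ by axiom (ii) of $X$ and then invoke the scaling convention $\|\chi_{B(0,r)}\|_{X}\sim r^{\sigma}$ from Remark 2.5; finiteness of this contribution then follows from integrability of $r^{(\sigma-\lambda)q}$ at the origin, which is guaranteed by $\lambda<\sigma$. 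Summing the two estimates and extracting the $q$-th root (with the obvious supremum variant when $q=\infty$) will give $\|\chi_{B}\|_{LM_{X,q}^{\lambda}}<\infty$, completing the verification.
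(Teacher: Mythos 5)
The paper offers no argument here --- it merely remarks that ``the proof is simple'' and leaves it to the reader --- so there is no proof of record to compare against; your write-up is in effect supplying the missing verification, and the route you take (checking the four axioms of Definition 2.1 one by one, with (i)--(iii) inherited from $X$ and the outer $L^{q}(dr/r)$ quasi-norm, and the real content in axiom (iv)) is the natural and essentially correct one. Your treatment of (iv) is also the right computation: splitting at $R_{0}=|x_{0}|+R$, using $\|\chi_{B}\chi_{B(0,r)}\|_{X}\le\|\chi_{B(0,r)}\|_{X}\sim r^{\sigma}$ near the origin and the constancy of the inner norm at infinity.

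The one point you should not gloss over is the endpoint $\lambda=0$. You open by assuming the paper's standing range $0\le\lambda<\sigma$, but your own estimate of the large-$r$ contribution is $\|\chi_{B}\|_{X}^{q}\int_{R_{0}}^{\infty}r^{-\lambda q-1}\,dr$, and this diverges when $\lambda=0$ and $q<\infty$ (since $\|\chi_{B}\|_{X}>0$ by axioms (i) and (iv) for $X$). So axiom (iv) genuinely fails for $LM_{X,q}^{0}$ with finite $q$: the proposition, read under the hypotheses you adopted, is false at that endpoint, and your parenthetical ``which holds because $\lambda>0$'' silently contradicts the assumption $\lambda\ge 0$ made two paragraphs earlier. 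The fix is simply to state the correct hypothesis --- $0<\lambda<\sigma$ when $q<\infty$, or $0\le\lambda<\sigma$ when $q=\infty$ --- which your computation in fact identifies as both sufficient and necessary. With that restriction made explicit, and granting the routine quasi-triangle and completeness arguments you defer, the proof is sound.
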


The proof is simple, interested readers can prove it among themselves.\par

 \begin{proposition}
 	Let $  1 \leq q <\infty $ and $0\leq \lambda<\sigma$ and X is the  ball quasi-Banach function space. Then for any $ w \subset \mathbb{R}^{n}$ 
 	$$
 	\|f\|_{L M^\lambda _{X q }} \sim
 	\left(\sum_{j=-\infty}^{\infty}\left(2^{-\lambda j} \|f\chi_{ B(0,  2^{j})}\|_{X } \right)^{q}\right)^{1 / q} 
 	$$
 \end{proposition}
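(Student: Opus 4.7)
The plan is a standard dyadic discretization of the integral defining $\|f\|_{LM^\lambda_{X,q}}$, using the monotonicity of $r \mapsto \|f\chi_{B(0,r)}\|_X$ that is built into any ball quasi-Banach function space.

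First I would split the defining integral over the dyadic annuli of scales:
\[
\|f\|_{LM^\lambda_{X,q}}^q \;=\; \int_0^\infty \bigl(r^{-\lambda}\|f\chi_{B(0,r)}\|_X\bigr)^q\frac{dr}{r} \;=\; \sum_{j\in\mathbb{Z}}\int_{2^j}^{2^{j+1}}\bigl(r^{-\lambda}\|f\chi_{B(0,r)}\|_X\bigr)^q\frac{dr}{r}.
\]
On each interval $[2^j,2^{j+1}]$ I would control the two factors separately. For the weight, since $\lambda\ge 0$, one has $2^{-\lambda(j+1)}\le r^{-\lambda}\le 2^{-\lambda j}$, so $r^{-\lambda}\sim 2^{-\lambda j}$ uniformly in $j$. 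For the $X$-norm factor, note that whenever $r_1\le r_2$ one has $|f|\chi_{B(0,r_1)}\le |f|\chi_{B(0,r_2)}$ pointwise, so property (ii) in the definition of a ball quasi-Banach function space gives $\|f\chi_{B(0,r_1)}\|_X\le \|f\chi_{B(0,r_2)}\|_X$. Thus for $r\in[2^j,2^{j+1}]$,
\[
\|f\chi_{B(0,2^j)}\|_X \;\le\; \|f\chi_{B(0,r)}\|_X \;\le\; \|f\chi_{B(0,2^{j+1})}\|_X,
\]
while $\int_{2^j}^{2^{j+1}} dr/r = \log 2$.

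Combining these, each dyadic piece is pinched between two explicit discrete terms:
\[
c\,\bigl(2^{-\lambda j}\|f\chi_{B(0,2^j)}\|_X\bigr)^q \;\le\; \int_{2^j}^{2^{j+1}}\bigl(r^{-\lambda}\|f\chi_{B(0,r)}\|_X\bigr)^q\frac{dr}{r} \;\le\; C\,\bigl(2^{-\lambda j}\|f\chi_{B(0,2^{j+1})}\|_X\bigr)^q,
\]
with constants depending only on $\lambda$ and $q$. Summing over $j\in\mathbb{Z}$ produces the lower bound with $\sum_j (2^{-\lambda j}\|f\chi_{B(0,2^j)}\|_X)^q$ directly, and the upper bound is the same sum after the trivial reindexing $j\mapsto j-1$, which only contributes the factor $2^{\lambda q}$.

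The argument is essentially bookkeeping; the only point that could be mistaken is the direction of monotonicity, which is why I would explicitly invoke property (ii) of Definition 2.1 rather than leave it implicit. No step involves the structure of $X$ beyond the lattice property, so the same proof adapts unchanged to the cases $X=L_p, L_{\vec p}, L_{p(\cdot)}$ mentioned in Remark 2.6; this is exactly what is needed to pass from the continuous Morrey-type characterization to the Herz-type discrete characterization referenced there.
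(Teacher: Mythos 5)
Your proposal is correct and follows essentially the same route as the paper: a dyadic splitting of the defining integral, comparison of $r^{-\lambda}$ with $2^{-\lambda j}$ on each interval $[2^j,2^{j+1}]$, and the monotonicity of $r\mapsto\|f\chi_{B(0,r)}\|_X$ from property (ii) of Definition 2.1. Your write-up is in fact slightly more careful than the paper's (which leaves the replacement of $\|f\chi_{B(0,t)}\|_X$ by $\|f\chi_{B(0,2^j)}\|_X$ implicit), but there is no substantive difference in method.
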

 \begin{proof}
 	We start with the equality
 	$$	
 	\begin{aligned}
 	\|f\|_{L M^\lambda _{X q }} &=\left(\int_{0}^{\infty}\left(t^{-\lambda} \|f\chi_{ B(0,  2^{j})}\|_{X }\right)^{q} \frac{\mathrm{d} t}{t}\right)^{1 / q} \\
 	&=\left(\sum_{j=-\infty}^{\infty} \int_{2^{j}}^{2^{j+1}}\left(t^{-\lambda} \|f\chi_{ B(0,  2^{j})}\|_{X }\right)^{q} \frac{\mathrm{d} t}{t}\right)^{1 / q}.
 	\end{aligned}
 	$$
 	On one hand
 	$$
 	\begin{array}{l}
 	\left(\sum_{j=-\infty}^{\infty} \int_{2^{j}}^{2^{j+1}}\left(t^{-\lambda} \|f\chi_{ B(0,  2^{j})}\|_{X }\right)^{q} \frac{\mathrm{d} t}{t}\right)^{1 / q} \\
 	\leq 2^{\lambda}(\ln 2)^{1 / q}\left(\sum_{j=-\infty}^{\infty}\left(2^{-\lambda(j+1)}\|f\chi_{ B(0,  2^{j})}\|_{X }\right)^{q}\right)^{1 / q}
 	\end{array}
 	$$
 	On the other hand
 	$$
 	\begin{array}{l}
 	\left(\sum_{j=-\infty}^{\infty} \int_{2^{j}}^{2^{j+1}}\left(t^{-\lambda}\|f\chi_{ B(0,  2^{j})}\|_{X } \right)^{q} \frac{\mathrm{d} t}{t}\right)^{1 / q} \\
 	\geq 2^{-\lambda}(\ln 2)^{1 / q}\left(\sum_{j=-\infty}^{\infty}\left(2^{-\lambda j}\|f\chi_{ B(0,  2^{j})}\|_{X }\right)^{q}\right)^{1 / q}
 	\end{array}
 	$$
 	and we obtain the required equivalence.
 	
 \end{proof}
Let $B_j=\{ x\in \mathbb{R}^n : |x| \leq 2^j\}$ and $A_j=B_j \setminus B_{j-1} $ for any $k \in \mathbb{Z}$. Denote $ \chi_j=\chi_{A_j}$, where $\chi_E$ is the charcteristic function of set E.
\begin{remark}
	Let $ 1<q\leq \infty$, $0\leq \lambda < \sigma$ and 	for all measurable functions $f: \mathbb{R}^{n} \rightarrow \mathbb{C}$.
	Then
	$$
	\|f\|_{L M_{X, q}^{\lambda}} \sim \left(\sum_{j=-\infty}^{\infty}2^{-j \lambda }\|f\chi_j\|_{X}^q\right)^{\frac{1}{q}}.
	$$
	
\end{remark}

\begin{proof}
	It is clear from that
	$$
	\|f\|_{L M_{\vec{p} \theta}^{\lambda}} \gtrsim\left\{\sum_{j=-\infty}^{\infty}\left(2^{-\lambda j}\|f\chi_j\|_{X}\right)^{q}\right\}^{\frac{1}{q}}.
	$$
	To prove the reverse estimate, 
	$$\begin{aligned}
	\|f\|_{L M_{\vec{p} \theta}^{\lambda}}& \sim 	\left(\sum_{j=-\infty}^{\infty}\left(2^{-\lambda j} \|f\chi_{ B(0,  2^{j})}\|_{X } \right)^{q}\right)^{1 / q} \\
	&=\left\{\sum_{j=-\infty}^{\infty}\left(\sum_{k=-\infty}^{j} 2^{-\lambda j}\|f\chi_j\|_{X}\right)^{\theta}\right\}^{\frac{1}{\theta}}\\
	&=\left\{\sum_{j=-\infty}^{\infty}\left(\sum_{k=-\infty}^{\infty} \chi_{(-\infty, j]}(k) 2^{-\lambda j}\|f\chi_j\|_{X}\right)^{q}\right\}^{\frac{1}{q}}\\
	&\leq \sum_{k=-\infty}^{\infty}\left\{\sum_{j=-\infty}^{\infty}\left(\chi_{(-\infty, j]}(k) 2^{-\lambda j}\|f\chi_j\|_{X}\right)^{q}\right\}^{\frac{1}{q}}\\
	&=\left\{\sum_{k=-\infty}^{\infty}\left(\frac{1}{1-2^{-\lambda}} \cdot 2^{-\lambda k}\|f\chi_j\|_{X}\right)^{q}\right\}^{\frac{1}{q}}.
	\end{aligned}
	$$
	
\end{proof}
\section{Boundedness of Hardy-littlewood Operators on the local Morrey space associated with ball quasi-Banach function spaces}
The Hardy operator $H$  and its dual operators $ H^{*} $, given by:
$$
H g(r)=\int_{0}^{r} g(t) d t \text ,~~\quad ~ H^{*} g(r)=\int_{r}^{\infty} g(t) d t .
$$
Since the following article requires, we must need the following relationship.
For  $1 \leq q <\infty $ and a measurable function $ v:(0, \infty) \rightarrow(0, \infty) $, whose norm is given by
$$
\|f\|_{L_{q, v}(0, \infty)} := \| v f\|_{L_{q}(0, \infty)}.
$$

	Consider first the following "partial" maximum function.
$$
\underline{M}f = \mathop{sup}\limits_{0<t \leq r} \frac{1}{|B(x,r)|} \int_{B(x,r)} |f(y)| dy;$$
$$
\overline{M}f = \mathop{sup}\limits_{t > r} \frac{1}{|B(x,r)|} \int_{B(x,r)} |f(y)| dy.\\
$$
\begin{lemma}
	Let  $f\in L_{1}^{loc}(\mathbb{R}^{n})$, then for  $B(y,2r)$ in $\mathbb{R}^n$
	$$\| 	M\left(f \chi_{\complement_{B(x, 2 r)}} \right) \chi_{B(0,t)}\|_{X} \gtrsim  r^{\sigma}  \overline {M} f(x).
	$$
\end{lemma}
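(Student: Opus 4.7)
The plan is to reduce the claim to a pointwise lower bound of the form $M\bigl(f\chi_{\complement_{B(x,2r)}}\bigr)(z)\gtrsim \overline{M}f(x)$ valid for $z$ in a sub-ball comparable to $B(0,t)$, and then close by the monotonicity of the $X$-quasinorm and the convention $\|\chi_{B(0,t)}\|_X = t^\sigma$ recorded in Remark~2.2. In this way the problem splits cleanly into a geometric/covering estimate on the maximal function and a one-line norm computation.

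For the pointwise step, I would fix $z$ in the target ball and, for every scale $s>2r$, use the containment $B(x,s)\subset B(z,s+|z-x|)$ together with $B(x,s)\setminus B(x,2r)\subset B(z,s+|z-x|)\cap\complement_{B(x,2r)}$. Since the two radii $s$ and $s+|z-x|$ are comparable in the relevant range ($s>2r$ and $|z-x|$ bounded by a multiple of $r$), the doubling of Lebesgue measure produces
\[
M\bigl(f\chi_{\complement_{B(x,2r)}}\bigr)(z)\ \gtrsim\ \frac{1}{|B(x,s)|}\int_{B(x,s)\setminus B(x,2r)}|f(y)|\,dy,
\]
uniformly in $z$ and in $s>2r$. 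Taking the supremum over $s>2r$ on the right, I would compare it to $\overline{M}f(x)=\sup_{t>r}|B(x,t)|^{-1}\int_{B(x,t)}|f|$: the interior contribution $|B(x,s)|^{-1}\int_{B(x,2r)}|f|$ is controlled by the factor $(2r/s)^n$ and can be absorbed on the left, while radii $t\in(r,2r]$ are handled via $|B(x,t)|^{-1}\int_{B(x,t)}|f|\lesssim|B(x,4r)|^{-1}\int_{B(x,4r)}|f|$, i.e.\ via doubling at the borderline scale $s=4r$ which is already captured by the annular supremum. This yields the desired scale-invariant pointwise bound.

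With the pointwise inequality in hand, the monotonicity of $\|\cdot\|_X$ delivers
\[
\|M(f\chi_{\complement_{B(x,2r)}})\chi_{B(0,t)}\|_X\ \gtrsim\ \overline{M}f(x)\cdot\|\chi_{B(0,t)}\|_X\ =\ t^{\sigma}\,\overline{M}f(x),
\]
and, reading the parameter $t$ as comparable to $r$ (the natural normalization suggested by the statement and by the intended use of this lemma in the boundedness proof of $M$ on $LM_{X,q}^{\lambda}$), one recovers the claimed $r^{\sigma}\overline{M}f(x)$. The main obstacle is the absorption argument: one must verify that the annular supremum $\sup_{s>2r}|B(x,s)|^{-1}\int_{B(x,s)\setminus B(x,2r)}|f|$ truly dominates $\overline{M}f(x)$, in particular at the degenerate scales $t\in(r,2r]$ where the annulus is empty and the truncation $f\chi_{\complement_{B(x,2r)}}$ removes most of the mass of $f$. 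Everything else is a routine doubling and quasi-norm manipulation, so the care in this absorption step is where the proof stands or falls.
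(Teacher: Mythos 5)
Your overall architecture --- a pointwise lower bound for the maximal function on a ball of radius comparable to $r$, followed by monotonicity of $\|\cdot\|_X$ and the normalization $\|\chi_{B}\|_X\sim r^{\sigma}$ --- is exactly the paper's strategy. The difference is that the paper simply asserts the pointwise bound via the inclusion $B(x,t/2)\subset B(y,t)\cap\complement_{B(x,2r)}$ (which is false: $B(x,t/2)$ meets $B(x,2r)$), whereas you try to justify it honestly and correctly isolate the absorption step as the crux. Unfortunately that step cannot be carried out, and this is a genuine gap rather than a technicality. After discarding the truncation you are left with
\[
\sup_{s>2r}\frac{1}{|B(x,s)|}\int_{B(x,s)\setminus B(x,2r)}|f(y)|\,dy ,
\]
and this does \emph{not} dominate $\overline{M}f(x)$. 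Take $f=\chi_{B(x,2r)}$: then $f\chi_{\complement_{B(x,2r)}}=0$ almost everywhere, so the left-hand side of the lemma vanishes, while $\overline{M}f(x)\geq 1$ under any reading of the definition, so the right-hand side is comparable to $r^{\sigma}>0$. The interior contribution $|B(x,s)|^{-1}\int_{B(x,2r)}|f|$ is indeed of size $(2r/s)^{n}\cdot|B(x,2r)|^{-1}\int_{B(x,2r)}|f|$, but there is nothing on the left to absorb it into when all the mass of $f$ sits inside $B(x,2r)$; the borderline scales $t\in(r,2r]$ cannot be recovered from the annular supremum for the same reason.

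What is true, and what the later arguments (Lemmas 3.2 and 3.3) actually need, is the lower bound for the \emph{untruncated} maximal function: for $y\in B(x,r)$ and $t>r$ one has $B(x,t)\subset B(y,2t)$, hence $Mf(y)\geq 2^{-n}\,\overline{M}f(x)$ on all of $B(x,r)$, and therefore $\|Mf\,\chi_{B(x,r)}\|_X\gtrsim r^{\sigma}\,\overline{M}f(x)$. The truncated operator $M(f\chi_{\complement_{B(x,2r)}})$ admits only the \emph{upper} bound $\lesssim\overline{M}f(x)$, which is the content of Lemma 3.2. So the statement as printed misplaces the truncation, the paper's own proof of it rests on a false set inclusion, and your proposal --- to your credit --- runs aground exactly where you predicted it might: the absorption cannot be made to work, because the inequality being absorbed into is the false one.
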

\begin{proof}
	If $y \in B(x,r), B(x,\frac{t}{2}) \subset B(y,t) \cap \complement B(x,2r) .$
	$$
	\begin{aligned}
	M\left(f \chi_{\complement_{B(x, 2 r)}}\right)(y)&=\sup _{t>0} \frac{1}{|B(y, t)|} \int_{B(y, t) \cap{ }^{\complement  B(x,2r)}}|f(z)| \mathrm{d} z\\
	&\gtrsim \sup _{t \geq 2r} \frac{1}{|B(x, 2 t)|} \int_{B(x, 2 t)}|f(y)| \mathrm{d} y=\overline {M} f(x).
	\end{aligned}
	$$
	$$
	\| 	M\left(f \chi_{\complement_{B(x, 2 r)}} \right) \chi_{B(0,t)}\|_{X} \gtrsim  r^{\sigma}  \overline {M} f(x).
	$$
\end{proof}
\begin{lemma}
	Let   $f\in L_{1}^{loc}(\mathbb{R}^{n})$, then
	$$
	\| Mf \chi_{B(0,t)}\| _{X} = \|M (f \chi_{B(x,2r)}) \chi_{B(0,t)}\| _{X} + r^{\sigma} \overline{M} f(x).
	$$
\end{lemma}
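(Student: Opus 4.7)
The plan is to read the ``='' in the statement as an equivalence ``$\sim$'' and prove the two one-sided estimates separately. The lower bound will follow almost immediately from Lemma~3.1 together with the monotonicity of the maximal operator, while the upper bound requires a matching pointwise upper bound for the ``far'' piece $M(f\chi_{\complement B(x,2r)})$.

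For the ``$\gtrsim$'' direction, I would start from the pointwise domination: since $|f\chi_{B(x,2r)}|\le|f|$ and $|f\chi_{\complement B(x,2r)}|\le|f|$, monotonicity of $M$ yields $M(f\chi_{B(x,2r)})\le Mf$ and $M(f\chi_{\complement B(x,2r)})\le Mf$. The lattice property of $X$ then gives
\[
\|M(f\chi_{B(x,2r)})\chi_{B(0,t)}\|_X \le \|Mf\chi_{B(0,t)}\|_X,
\]
and combining the second pointwise inequality with Lemma~3.1 produces $r^{\sigma}\overline{M}f(x) \lesssim \|M(f\chi_{\complement B(x,2r)})\chi_{B(0,t)}\|_X \le \|Mf\chi_{B(0,t)}\|_X$. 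Adding these two bounds finishes this direction.

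For the ``$\lesssim$'' direction, I would first use sublinearity of $M$ on $f=f\chi_{B(x,2r)}+f\chi_{\complement B(x,2r)}$ to obtain the pointwise inequality $Mf(y)\le M(f\chi_{B(x,2r)})(y)+M(f\chi_{\complement B(x,2r)})(y)$, and then apply the quasi-triangle inequality of $X$ to deduce
\[
\|Mf\chi_{B(0,t)}\|_X \lesssim \|M(f\chi_{B(x,2r)})\chi_{B(0,t)}\|_X + \|M(f\chi_{\complement B(x,2r)})\chi_{B(0,t)}\|_X.
\]
The first term is already present on the right-hand side of the lemma, so the remaining task is to show $\|M(f\chi_{\complement B(x,2r)})\chi_{B(0,t)}\|_X \lesssim r^{\sigma}\overline{M}f(x)$.

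This last estimate is where I expect the main obstacle, since it is the matching upper bound to the lower bound of Lemma~3.1. The strategy is to establish a pointwise bound $M(f\chi_{\complement B(x,2r)})(y)\lesssim \overline{M}f(x)$ on the relevant support (implicitly $y$ is close to $x$ under the scale relation $t\sim r$): for such $y$ and any $s>0$, the integral over $B(y,s)\cap \complement B(x,2r)$ is nonzero only when $s\gtrsim r$, and in that range $B(y,s)\subset B(x,2s)$, so
\[
\frac{1}{|B(y,s)|}\int_{B(y,s)\cap\complement B(x,2r)}|f(z)|\,dz \;\lesssim\; \frac{1}{|B(x,2s)|}\int_{B(x,2s)}|f(z)|\,dz \;\le\; \overline{M}f(x).
\]
Taking the supremum in $s$ delivers the pointwise bound, and then the norm estimate $\|\chi_{B(0,t)}\|_X \lesssim r^{\sigma}$ (via $\sigma=\log_r\|\chi_{B(0,r)}\|_X$ from Remark~2.5, combined with the implicit geometric setup that links $t$ and $r$) converts the pointwise bound into the required norm bound. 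The delicate point is precisely this reconciliation of the two scales $t$ and $r$, which is forced on us by the way Lemma~3.1 has been formulated.
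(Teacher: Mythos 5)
Your proposal is correct and follows essentially the same route as the paper: both split $f$ into $f\chi_{B(x,2r)}$ and $f\chi_{\complement B(x,2r)}$, prove the pointwise bound $M(f\chi_{\complement B(x,2r)})(y)\lesssim \overline{M}f(x)$ for $y$ near $x$ via the same geometric observations (nonempty intersection forces $s\gtrsim r$ and $B(y,s)\cap\complement B(x,2r)\subset B(x,2s)$), convert to norms using $\|\chi_{B(0,t)}\|_X\sim r^{\sigma}$, and obtain the reverse inequality from monotonicity of $M$ together with Lemma~3.1. You are also right to read the ``$=$'' as a two-sided equivalence and to flag the unresolved mixing of the scales $t$ and $r$ and of the centers $0$ and $x$ --- the paper's own proof carries exactly the same imprecision.
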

\begin{proof}
	It is obvious that for  $B(x,r)$,
	$$
	\| Mf \chi_{B(0,t)}\| _{X} \leq \|M (f \chi_{B(x,2r)}) \chi_{B(0,t)}\| _{X} +  \|M (f \chi_{\complement B(x,2r)}) \chi_{B(0,t)} \| _{X}.
	$$
	\par
	Let $y \in B(x,r)$. If $B(y,t) \cap \complement B (x,2r) \ne \emptyset $, then $t>r$. Indeed $z \in B(y,t) \cap \complement B(x,2r) \ne \emptyset$, then $t > |z-y| \ge |z-x|-|x-y| >2r -r <r $.\par Anothers, $B(y,t) \cap \complement B(x,2r) \subset  B(x,2t)$. Indeed , if $z \in B(y,t) \cap \complement B(x,2r)$ , then $|z-x| \leq |z-y|+|y-x| <t+r<2t.$\par
	Hence
	$$
	\begin{aligned}
	M\left(f \chi_{\complement_{B(x, 2 r)}}\right)(y)&=\sup _{t>0} \frac{1}{|B(y, t)|} \int_{B(y, t) \cap{ }^{\complement  B(x,2r)}}|f(z)| \mathrm{d} z\\
	&	\lesssim \sup _{t \geq r} \frac{1}{|B(x, 2 t)|} \int_{B(x, 2 t)}|f(y)| \mathrm{d} y=\overline {M} f(x).
	\end{aligned}
	$$
	
	$$
	\| 	M\left(f \chi_{\complement_{B(x, 2 r)}}\right) \chi_{B(0,t)}\|_{X} \lesssim  r^{\sigma}  \overline {M} f(x).
	$$\par
	On the one hand,
	$$
	\| M\left(f \chi_{{B(x, 2 r)}}\right) \chi_{B(0,t)}\|_{X} \leq \| Mf \chi_{B(0,t)}\|_{X}.
	$$
	\par 
	On the other hand, if $y\in B(x,r)  , ~z \in  B(y,t) \cap \complement B(x,2r)  $ and Lemma 3.1 then
	$$
	\| Mf \chi_{B(0,t)}\|_{X} \gtrsim \| M(f \chi_{\complement_B(x,2r)}) \chi_{B(0,t)}\| _{X}\gtrsim   r^{\sigma} \overline {M} f(x).
	$$
	The proof is complete.
\end{proof}
\begin{lemma}
	Let  $f\in L_{1}^{loc}(\mathbb{R}^{n})$, $X \in \mathbb{M}$. Then for any  $B(x,r)$ in $\mathbb{R}^n,$
	$$\|Mf \chi_{B(0,t)}\|_{X}\lesssim r^{\sigma}\int_{r}^{\infty}\|f \chi_{B(0,t)}\|_{X}\frac{dt}{t^{\sigma+1}}.$$
	
\end{lemma}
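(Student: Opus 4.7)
The plan is to decompose $Mf$ into a local and a tail contribution via Lemma 3.2, then bound each part against the integral on the right. Reading the statement with $B(0,r)$ in place of $B(0,t)$ on the left-hand side (the $t$ on the LHS appears to be a typo, as $t$ is also the integration variable on the RHS), Lemma 3.2 immediately gives
$$\|Mf\,\chi_{B(0,r)}\|_X \lesssim \|M(f\chi_{B(0,2r)})\,\chi_{B(0,r)}\|_X + r^\sigma\,\overline{M}f(0),$$
so it suffices to dominate each of the two summands by $r^{\sigma}\int_r^\infty\|f\chi_{B(0,t)}\|_X\,t^{-\sigma-1}\,dt$.

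For the local term I would use the hypothesis $X\in\mathbb{M}$, which yields $\|M(f\chi_{B(0,2r)})\|_X \lesssim \|f\chi_{B(0,2r)}\|_X$. By the lattice property in Definition 2.1(ii), the function $\rho\mapsto\|f\chi_{B(0,\rho)}\|_X$ is non-decreasing, so for every $t\ge 2r$ we have $\|f\chi_{B(0,2r)}\|_X\le\|f\chi_{B(0,t)}\|_X$. Multiplying by $t^{-\sigma-1}$ and integrating over $[2r,\infty)$ gives
$$\|f\chi_{B(0,2r)}\|_X \;=\; \sigma(2r)^\sigma\,\|f\chi_{B(0,2r)}\|_X \int_{2r}^\infty \frac{dt}{t^{\sigma+1}} \;\lesssim\; r^\sigma\int_r^\infty \|f\chi_{B(0,t)}\|_X\,\frac{dt}{t^{\sigma+1}},$$
which is exactly the bound required for the local piece.

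For the tail term, the plan is first to apply the generalized Hölder inequality $\|gh\|_{L^1}\le\|g\|_X\|h\|_{X'}$ together with the identity $\|\chi_{B(0,t)}\|_X\,\|\chi_{B(0,t)}\|_{X'}\sim|B(0,t)|\sim t^n$ (using $\|\chi_{B(0,t)}\|_X=t^\sigma$ from the standing convention in Remark~2.6). This gives, for every $t>r$,
$$\frac{1}{|B(0,t)|}\int_{B(0,t)}|f(y)|\,dy \;\lesssim\; t^{-\sigma}\,\|f\chi_{B(0,t)}\|_X,$$
so taking the supremum bounds $\overline{M}f(0)\lesssim \sup_{t>r}t^{-\sigma}\|f\chi_{B(0,t)}\|_X$. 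To pass to the integral form I would use the standard "smearing" argument: setting $\psi(t):=\|f\chi_{B(0,t)}\|_X$, which is non-decreasing, the estimate $\int_{t_0}^{2t_0}\psi(s)s^{-\sigma-1}\,ds\;\ge\;\frac{1-2^{-\sigma}}{\sigma}\,\psi(t_0)\,t_0^{-\sigma}$ (for any $t_0>r$) converts the supremum into $\int_r^\infty\|f\chi_{B(0,t)}\|_X\,t^{-\sigma-1}\,dt$. Multiplying by $r^\sigma$ then finishes the tail estimate, and combining the two pieces completes the proof.

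The main obstacle I expect is the passage from the pointwise supremum to the integral form of the tail, which relies on $\sigma>0$ and on the monotonicity of $t\mapsto\|f\chi_{B(0,t)}\|_X$ to make the dyadic smearing argument go through. A secondary subtlety is that the Hölder-type identity $\|\chi_B\|_X\|\chi_B\|_{X'}\sim|B|$ presupposes that $X$ is a ball Banach (not merely quasi-Banach) function space, so that the duality bracket is genuinely available; this should be in force given the standing assumptions.
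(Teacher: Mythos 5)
Your proposal is correct and follows essentially the same route as the paper: split $Mf$ via Lemma 3.2, control the local piece by the $X$-boundedness of $M$ together with the monotonicity of $t\mapsto\|f\chi_{B(\cdot,t)}\|_X$ and the identity $\sigma(2r)^{\sigma}\int_{2r}^{\infty}t^{-\sigma-1}\,dt=1$, and control the tail piece by converting the supremum defining $\overline{M}f$ into the integral. In fact you supply more detail than the paper does for the tail term $II$ (the Hölder/Köthe-dual step and the dyadic smearing are simply asserted there), and your flagged caveats about $\sigma>0$ and the availability of $\|\chi_B\|_X\|\chi_B\|_{X'}\sim|B|$ are exactly the points the paper glosses over.
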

\begin{proof}
	By Lemma 3.2
	$$\begin{aligned}
	\| Mf \chi_{B(0,t)}\| _{X} \leq \|M (f \chi_{B(x,2r)}) \chi_{B(0,t)}\| _{X} + r^{\sigma} \overline{M} f(x)
	&:=I+II
	\end{aligned}
	$$
	On the one hand,
	$$\begin{aligned}
	I & \leq \|M (f \chi_{B(x,r)}) \| _{X } \leq \|f \chi_{B(y,2r)} \| _{X}
	\leq \| f \chi_{B(y,2r)}\| _{X }  \\
	& \lesssim  r^{\sigma}\int_{2r}^{\infty}\|f \chi_{B(x,2r)}\|_{X }\frac{dt}{t^{\sigma+1}}
	\lesssim  r^{\sigma}\int_{r}^{\infty}\|f \chi_{B(x,t)}\|_{X }\frac{dt}{t^{\sigma+1}}.
	\end{aligned}
	$$
	On the other hand,
	$$
	\begin{aligned}
	II & =r^{\sigma}  \mathop{sup}\limits_{t > r} \frac{1}{|B(x,r)|} \int_{B(x,r)} |f(y)| dy \\
	& \lesssim  r^{\sigma}\int_{r}^{\infty}\|f \chi_{B(x,t)}\|_{X }\frac{dt}{t^{\sigma+1}}.
	\end{aligned}
	$$
	
\end{proof}
	\begin{lemma}
	Let 
	$ 0<q \leq \infty $, $0 \leq \lambda < \sigma$, $X \in \mathbb{M}$,	for all $f \in L^{loc}_{1}$, where
	$$
	g_{X}(t)= \| f \chi_{B(0,t^{-\frac{1}{\sigma}})}\| _{X },
	$$
	$$
	v(r) = r^{\frac{\lambda}{\sigma}-1-\frac{1}{q}}.$$ Then
	$$
	\| Mf \| _{L M_{X, q}^{\lambda}} \lesssim \|Hg_{X}\| _{L_{q,v}(0, \infty)},
	$$

\end{lemma}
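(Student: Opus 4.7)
The plan is to chain Lemma 3.3 into the definition of $\|\cdot\|_{LM_{X,q}^{\lambda}}$ and then reduce everything to $Hg_X$ by two explicit changes of variable. By Lemma 3.3 (applied with centered balls $B(0,r)$), for every $r>0$
$$
\|Mf\,\chi_{B(0,r)}\|_{X}\;\lesssim\; r^{\sigma}\int_{r}^{\infty}\|f\,\chi_{B(0,t)}\|_{X}\,\frac{dt}{t^{\sigma+1}}.
$$
Substituting this into the definition of the local Morrey quasi-norm gives
$$
\|Mf\|_{LM_{X,q}^{\lambda}}\;\lesssim\;\left(\int_{0}^{\infty}\Bigl(r^{\sigma-\lambda}\int_{r}^{\infty}\|f\,\chi_{B(0,t)}\|_{X}\,\frac{dt}{t^{\sigma+1}}\Bigr)^{q}\,\frac{dr}{r}\right)^{\!1/q}.
$$

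The next step is to recognize the inner integral as $Hg_X(r^{-\sigma})$ up to a constant. Setting $s=t^{-\sigma}$, so that $t=s^{-1/\sigma}$ and $\frac{dt}{t^{\sigma+1}}=-\frac{1}{\sigma}\,ds$, one obtains
$$
\int_{r}^{\infty}\|f\,\chi_{B(0,t)}\|_{X}\,\frac{dt}{t^{\sigma+1}}\;=\;\frac{1}{\sigma}\int_{0}^{r^{-\sigma}}g_{X}(s)\,ds\;=\;\frac{1}{\sigma}\,Hg_{X}(r^{-\sigma}).
$$
Plugging this back in, the outer integral becomes
$$
\left(\int_{0}^{\infty}\bigl(r^{\sigma-\lambda}\,Hg_{X}(r^{-\sigma})\bigr)^{q}\,\frac{dr}{r}\right)^{\!1/q}.
$$

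Now the substitution $u=r^{-\sigma}$ (so $\frac{dr}{r}=-\frac{1}{\sigma}\,\frac{du}{u}$ and $r^{\sigma-\lambda}=u^{-(\sigma-\lambda)/\sigma}=u^{\lambda/\sigma-1}$) converts this to
$$
\left(\frac{1}{\sigma}\int_{0}^{\infty} u^{q(\lambda/\sigma-1)}\,\bigl(Hg_{X}(u)\bigr)^{q}\,\frac{du}{u}\right)^{\!1/q}\!=\left(\frac{1}{\sigma}\int_{0}^{\infty}\bigl(u^{\lambda/\sigma-1-1/q}Hg_{X}(u)\bigr)^{q}\,du\right)^{\!1/q}\!\!\lesssim\|Hg_{X}\|_{L_{q,v}(0,\infty)},
$$
since $v(u)=u^{\lambda/\sigma-1-1/q}$ is exactly the weight that appears.

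The case $q=\infty$ is handled in the same way, replacing the $L^{q}(dr/r)$ integral by a supremum in $r>0$ and the $L_{q,v}$ norm by $\sup_{u>0} v(u)\,Hg_{X}(u)$ with $v(u)=u^{\lambda/\sigma}$; both changes of variable go through unchanged. The only genuine issue is making sure the inequality in Lemma 3.3 (stated there with a generic $B(x,r)$ but applied in the proof via the norm $\|\cdot\chi_{B(0,r)}\|_{X}$ and the hypothesis $X\in\mathbb{M}$) is legitimately used with $x=0$; this is the one point that needs a brief justification, after which the two substitutions are routine bookkeeping and the condition $\lambda<\sigma$ guarantees that $v$ is integrable at the relevant endpoint so that no divergence arises.
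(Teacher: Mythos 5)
Your proposal is correct and follows essentially the same route as the paper: apply Lemma 3.3 to bound $\|Mf\,\chi_{B(0,r)}\|_{X}$, recognize the resulting inner integral as $Hg_{X}$ after the substitution $s=t^{-\sigma}$, and then change variables $u=r^{-\sigma}$ in the outer integral to produce the weight $v(u)=u^{\lambda/\sigma-1-1/q}$. Your version is in fact more careful than the paper's (which compresses the two substitutions into a single loosely written line), and your closing remark about applying Lemma 3.3 with $x=0$ correctly identifies the only point needing justification.
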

\begin{proof}
	By Lemma 3.3,
	$$
	\begin{aligned}
	\|Mf\|_{L M_{X, q}^\lambda}
	&=\| r^{-\lambda-\frac{1}{q} }\|Mf \chi_{B(0,r)}\|_{X}\|_{L_{q}(0,\infty)}\\
	&\lesssim\|  r^{-\lambda}r^{\sigma}\int_{r}^{\infty}\|f \chi_{B(0,t)}\|_{X} \frac{dt}{t^{\sigma+1}}\|_{L_{q}(0,\infty)}\\
	&\lesssim\| r^{-\lambda}r^{\sigma}\int_{0}^{r^{-\frac{1}{\sigma}}}\|f \chi_{B(0,t^{-\frac{1}{\sigma}})}\|_{X }dt \|_{L_{q}(0,\infty)}\\
	&\lesssim\| r^{\frac{\lambda}{\sigma}-1-\frac{1}{q}} Hg_{X}(r)\|_{L_{q}(0,\infty)}\\
	&\sim \|Hg_{X}\|_{L{q,\nu}(0,\infty)}.
	\end{aligned}
	$$
\end{proof}
	\begin{lemma}
	Let 
	$ 0<q \leq \infty $, for all $f \in L^{loc}_{1}$,  
	where
$$
	g_{X}(t)= \| f \chi_{B(0,t^{-\frac{1}{\sigma}})}\| _{X },
$$
	$$
	v(r) = r^{\frac{\lambda}{\sigma}-\frac{1}{q}}.$$
	Then
	$$
	\|g_{X}\| _{L_{q,v}(0, \infty)} \lesssim  \| f \| _{L M_{X, q}^{\lambda}},
	$$
\end{lemma}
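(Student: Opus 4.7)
The plan is to prove the bound by a single change of variables that converts the weighted $L_q$-integral defining $\|g_X\|_{L_{q,v}(0,\infty)}$ directly into the integral defining $\|f\|_{LM_{X,q}^\lambda}^q$. In fact, the argument will yield the equivalence of the two quantities, not merely the one-sided estimate.

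First, I would unfold both sides according to the definitions. Since $v(r) = r^{\lambda/\sigma - 1/q}$ and $g_X(r) = \|f\chi_{B(0,r^{-1/\sigma})}\|_X$, we have
$$\|g_X\|_{L_{q,v}(0,\infty)}^q = \int_0^\infty r^{\lambda q/\sigma - 1}\, \|f\chi_{B(0, r^{-1/\sigma})}\|_X^q\, dr,$$
while the target norm, raised to the $q$th power, is
$$\|f\|_{LM_{X,q}^\lambda}^q = \int_0^\infty t^{-\lambda q - 1}\, \|f\chi_{B(0,t)}\|_X^q\, dt.$$
The goal is therefore reduced to matching these two integrals.

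Next, I would perform the substitution $t = r^{-1/\sigma}$, i.e.\ $r = t^{-\sigma}$, so that $dr = -\sigma\, t^{-\sigma - 1}\, dt$ and the limits $0$ and $\infty$ swap. A short exponent computation gives $r^{\lambda q/\sigma - 1}\, dr = -\sigma\, t^{-\lambda q - 1}\, dt$, so that after reversing the limits the left-hand integral equals $\sigma$ times the right-hand one. This yields the claimed bound with implicit constant $\sigma^{1/q}$, and in fact the reverse inequality as well. The endpoint $q = \infty$ is handled by the same substitution applied directly to the supremum, giving $\sup_{r>0} r^{\lambda/\sigma}\, \|f\chi_{B(0, r^{-1/\sigma})}\|_X = \sup_{t>0} t^{-\lambda}\, \|f\chi_{B(0,t)}\|_X$.

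I do not expect any substantive obstacle here; the statement is essentially a change-of-variables identity rather than a genuine analytic inequality, which is why it pairs naturally with Lemma 3.4 (the harder direction, where the Hardy operator intervenes). The only points requiring attention are verifying that the exponent arithmetic produces exactly the weight $t^{-\lambda q - 1}$ appearing in the $LM_{X,q}^\lambda$ norm, and observing that $\sigma > 0$ (automatic under the convention of Remark 2.5) ensures that $r \mapsto r^{-1/\sigma}$ is a smooth bijection of $(0,\infty)$ so the substitution is legitimate.
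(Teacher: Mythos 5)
Your proposal is correct and follows essentially the same route as the paper: the paper's own argument is precisely the substitution $t = r^{-1/\sigma}$ applied to the weighted $L_q$ integral, absorbing the resulting factor $\sigma^{1/q}$ into the implicit constant. Your version is in fact slightly more careful, since you make the exponent arithmetic and the role of $\sigma>0$ explicit where the paper simply writes $\lesssim$.
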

	$$
\begin{aligned}
\|g_{X}\|_{L{q,v}(0,,\infty)}&= \|v(r)\| f \chi_{B(0,r^{-\frac{1}{\sigma}})}\|_{X }\|_{L_{q}(0,\infty)}\\
			\end{aligned}
$$
$$
\begin{aligned}
\quad~~~ \quad~~~\quad~~~ \quad
&=\|r^{\frac{\lambda}{\sigma}-\frac{1}{q}}\|f \chi_{B(0,r^{-\frac{1}{\sigma}})}\|_{X} \|_{L_{q}(0,\infty)}\\
&\lesssim \| r^{-\lambda-\frac{1}{q}}\|f \chi_{B(0,r)}\|_{X }\|_{L_{q}(0,\infty)}\\
&=\|f\| _{L M_{X, q}^{\lambda}}.
\end{aligned}
$$
\begin{lemma}
Let 
$ 0<q \leq \infty $, $0 \leq \lambda < \sigma$, for all $f \in L^{loc}_{1}$, 
$$
g_{X}(t)= \| f \chi_{B(0,t^{-\frac{1}{\sigma}})}\| _{X },
$$
$$
v_2(r) = r^{\frac{\lambda}{\sigma}-1-\frac{1}{q}}$$
$$
v_1(r) = r^{\frac{\lambda}{\sigma}-\frac{1}{q}}.$$\\
Then
$$
\|Hg_{X}\| _{L_{q,v_2}(0, \infty)} \lesssim \|g_{X}\|_{L{q,v_1}(0,,\infty)},
$$
\end{lemma}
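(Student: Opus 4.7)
The plan is to unpack both norms into weighted $L_q$ integrals on the half-line,
\begin{align*}
\|Hg_X\|_{L_{q,v_2}(0,\infty)}^q &= \int_0^\infty \bigl(Hg_X(r)\bigr)^q r^{\lambda q/\sigma - q - 1}\,dr, \\
\|g_X\|_{L_{q,v_1}(0,\infty)}^q &= \int_0^\infty g_X(r)^q r^{\lambda q/\sigma - 1}\,dr,
\end{align*}
so that the claim becomes a weighted Hardy inequality with power weights. The condition $0 \le \lambda < \sigma$ is exactly what makes the weight pair admissible; for instance $\int_t^\infty r^{\lambda q/\sigma - q - 1}\,dr$ converges iff $\lambda < \sigma$. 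A secondary observation, used only in the small-$q$ regime, is that $g_X(t) = \|f\chi_{B(0,t^{-1/\sigma})}\|_X$ is non-increasing on $(0,\infty)$, because the ball $B(0,t^{-1/\sigma})$ shrinks as $t$ grows and $\|\cdot\|_X$ respects the pointwise order.

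For $1 \le q < \infty$, I would apply the classical weighted Hardy inequality
$$\int_0^\infty (Hg)(r)^q\, r^{-\alpha}\,dr \lesssim \int_0^\infty g(r)^q\, r^{q-\alpha}\,dr, \qquad \alpha > 1,$$
with $\alpha = q + 1 - \lambda q/\sigma$; the constraint $\alpha > 1$ is exactly $\lambda < \sigma$, and the weights on the two sides then match our expressions. The standard derivation writes $Hg(r) = \int_0^r g(t)\,t^{\beta} \cdot t^{-\beta}\,dt$ for a suitable $0 < \beta < 1/q'$, applies H\"older in $t$ and Fubini in $r$; for $q = 1$ it reduces to Minkowski's integral inequality. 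The case $q = \infty$ is direct: with $M := \sup_t t^{\lambda/\sigma} g_X(t)$ one has $g_X(t) \le M t^{-\lambda/\sigma}$, hence $Hg_X(r) \le M r^{1-\lambda/\sigma}/(1-\lambda/\sigma)$, the integral near $0$ converging precisely because $\lambda/\sigma < 1$.

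The main obstacle is the range $0 < q < 1$, where neither H\"older nor Minkowski is available. Here I would exploit the monotonicity of $g_X$ through a dyadic splitting
$$Hg_X(r) = \sum_{k \ge 0} \int_{r 2^{-k-1}}^{r 2^{-k}} g_X(t)\,dt \le \sum_{k \ge 0} r\, 2^{-k-1}\, g_X\!\bigl(r 2^{-k-1}\bigr),$$
followed by the $q$-subadditivity $(\sum_k a_k)^q \le \sum_k a_k^q$ valid for $0 < q \le 1$. Integrating each resulting summand against $r^{\lambda q/\sigma - q - 1}\,dr$ and changing variable $s = r 2^{-k-1}$, the $k$-th piece collapses to $2^{-(k+1)(q - \lambda q/\sigma)} \int_0^\infty s^{\lambda q/\sigma - 1} g_X(s)^q\,ds$; the geometric factor is summable in $k$ because $q - \lambda q/\sigma > 0$, again using $\lambda < \sigma$, and the desired bound follows.
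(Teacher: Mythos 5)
Your proof is correct and complete, and it is in fact more careful than the paper's own argument. Both proofs reduce the claim to a power-weighted Hardy inequality on $(0,\infty)$: after unpacking the norms exactly as you do, the paper formally interchanges the $r$- and $t$-integrations and evaluates $\int_0^t r^{-1-1/q+\lambda/\sigma}\,dr=(\lambda/\sigma-1/q)^{-1}t^{\lambda/\sigma-1/q}$, which is morally the same computation as your classical weighted Hardy inequality with $\alpha=q+1-\lambda q/\sigma>1$. The differences are real, though. First, the paper's interchange step is only a legitimate Minkowski/Fubini-type manipulation when $q\ge 1$, and its inner integral converges only when $\lambda/\sigma>1/q$ --- a condition not among the hypotheses and one that fails, for instance, when $\lambda=0$; your H\"older--Fubini derivation needs only $\lambda<\sigma$, exactly as stated. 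Second, and this is the genuinely new ingredient in your proposal, you cover the range $0<q<1$ (which the lemma asserts but the paper's method cannot reach) by observing that $g_X$ is non-increasing --- a consequence of property (ii) in the definition of a ball quasi-Banach function space, since $B(0,t^{-1/\sigma})$ shrinks as $t$ grows --- and then combining a dyadic splitting of $\int_0^r$ with $q$-subadditivity; the resulting geometric factor $2^{-(k+1)q(1-\lambda/\sigma)}$ is summable precisely because $\lambda<\sigma$. Your $q=\infty$ case is also handled cleanly and directly. In short, your argument proves the lemma on the full stated range $0<q\le\infty$ under exactly the stated hypotheses, which the paper's proof, as written, does not.
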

\begin{proof}
$$
\begin{aligned}
\|Hg_{X}\| _{L_{q,v_2}(0, \infty)}&
=\left( \int_{0}^{\infty} \left( r^{-1-\frac{1}{q}+\frac{\lambda}{\sigma}} \int_{0}^r \|f \chi_{B(0,t^{-\frac{1}{\sigma}})}\|_{X } dt \right)^q dr\right)^{\frac{1}{q}}
\\&\leq \left( \int_{0}^{\infty} \left( \|f \chi_{B(0,t^{-\frac{1}{\sigma}})}\|_{X } \int_{0}^t r^{-1-\frac{1}{q}+\frac{\lambda}{\sigma}}  dr \right)^q dt\right)^{\frac{1}{q}}
\\& \leq \left(-\frac{1}{q}+\frac{\lambda}{\sigma}\right)^{-1}
\left( \int_{0}^{\infty} \left(\|f \chi_{B(0,t^{-\frac{1}{\sigma}})}\|_{X }   t^{-\frac{1}{q}+\frac{\lambda}{\sigma}}   \right)^q dt\right)^{\frac{1}{q}}
\\&\lesssim \|g_{X}\|_{L{q,v_1}(0,,\infty)}.
\end{aligned}
$$
\end{proof}
	\begin{theorem}
	Let  $0 \leq \lambda< \sigma, 0<q\leq \infty$, $X \in \mathbb{M}$. 
	Then the operator $ M $ is bounded from $ L M_{X, q}^\lambda $ to $ L M_{X, q}^\lambda $.
\end{theorem}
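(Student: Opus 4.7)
The plan is to exploit the chain of lemmas already established in this section to reduce the boundedness of $M$ on $LM_{X,q}^\lambda$ to a weighted Hardy-type inequality on $(0,\infty)$. The idea, standard in the Burenkov--Guliyev program, is to pass from the ball quasi-Banach function space side to a one-variable weighted $L_q$ problem via the auxiliary function $g_X(t) = \|f \chi_{B(0, t^{-1/\sigma})}\|_X$.

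First, I would invoke Lemma 3.4 to dominate the left-hand side by a weighted norm of the Hardy transform applied to $g_X$:
$$
\|Mf\|_{LM_{X,q}^\lambda} \lesssim \|H g_X\|_{L_{q,v_2}(0,\infty)}, \qquad v_2(r)= r^{\lambda/\sigma - 1 - 1/q}.
$$
Next, I would apply Lemma 3.6 (the weighted Hardy inequality on $(0,\infty)$, valid precisely because the condition $0 \leq \lambda < \sigma$ forces $-\tfrac{1}{q}+\tfrac{\lambda}{\sigma}<0$, so that the inner integral $\int_0^t r^{-1-1/q+\lambda/\sigma}\, dr$ converges) to move from $Hg_X$ to $g_X$ itself:
$$
\|H g_X\|_{L_{q,v_2}(0,\infty)} \lesssim \|g_X\|_{L_{q,v_1}(0,\infty)}, \qquad v_1(r)=r^{\lambda/\sigma - 1/q}.
$$
Finally, Lemma 3.5 identifies this weighted norm with $\|f\|_{LM_{X,q}^\lambda}$ up to a constant, closing the chain.

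Putting the three estimates together yields
$$
\|Mf\|_{LM_{X,q}^\lambda} \lesssim \|H g_X\|_{L_{q,v_2}(0,\infty)} \lesssim \|g_X\|_{L_{q,v_1}(0,\infty)} \lesssim \|f\|_{LM_{X,q}^\lambda},
$$
which is the desired boundedness. The only non-mechanical step is checking that all the hypotheses of Lemmas 3.4--3.6 are simultaneously in force: the assumption $X \in \mathbb{M}$ is used inside Lemma 3.3 (and hence Lemma 3.4) to control $M(f \chi_{B(x,2r)})$, while the strict inequality $\lambda < \sigma$ is exactly what makes the weighted Hardy step in Lemma 3.6 valid. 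The case $q=\infty$ should be handled by the obvious convention $L_{\infty,v}$-interpretation and the $\sup$ version of Lemma 3.6, which follows from the same $\lambda<\sigma$ condition. I expect no genuine obstacle beyond bookkeeping; the real content has been placed into the preparatory lemmas.
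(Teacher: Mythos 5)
Your proposal is correct and follows exactly the paper's own argument: the paper proves Theorem 3.1 by the same chain $\|Mf\|_{LM_{X,q}^\lambda}\lesssim\|Hg_X\|_{L_{q,v_2}(0,\infty)}\lesssim\|g_X\|_{L_{q,v_1}(0,\infty)}\lesssim\|f\|_{LM_{X,q}^\lambda}$ obtained by concatenating Lemmas 3.4, 3.6, and 3.5 in that order. Your added remarks on where $X\in\mathbb{M}$ and $\lambda<\sigma$ enter are consistent with how those hypotheses are used in the preparatory lemmas.
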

\begin{proof}
By Lemma 3.4, 3.5 and 3.6.
$$
	\| Mf \| _{L M_{X, q}^{\lambda}} \lesssim \|Hg_{X}\| _{L_{q,v}(0, \infty)} \lesssim \|g_{X}\|_{L{q,v_1}(0,,\infty)}\lesssim  \| f \| _{L M_{X, q}^{\lambda}}
$$
\end{proof}
\section{Interpolation theorem}
\begin{lemma}
	Let $ 0< q \leq \infty, 0 \leq \lambda < \sigma$. Then 
	
	$$
	\widetilde{L M}_{X, q}^{\lambda}=L M_{X, q}^{\lambda}$$
	Moreover, for  $ q<\infty $, 
	$$
	\left(\frac{nq-\lambda q}{n}\right)^{\frac{1}{q}}\|f\|_{\widetilde{L M}_{X, q}^{\lambda}} \leq\|f\|_{L M_{X, q}^{\lambda}} \leq\|f\|_{\widetilde{L M}_{X, q}^{\lambda}}.
	$$
	For $ q=\infty $,
	$$
	\|f\|_{\widetilde{L M}_{X, \infty}^{\lambda}} =\|f\|_{L M_{X,\infty}^{\lambda}}. 
	$$ 
\end{lemma}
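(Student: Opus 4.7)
The argument rests on the algebraic identity
\[
r^{-\lambda}\|f\|_{\widetilde{X}}=r^{n-\lambda}\sup_{\rho\ge r}\rho^{-n}\|f\chi_{B(0,\rho)}\|_X=\sup_{\rho\ge r}\bigl(r/\rho\bigr)^{n-\lambda}\phi(\rho),
\]
where $\phi(\rho):=\rho^{-\lambda}\|f\chi_{B(0,\rho)}\|_X$ is precisely the integrand defining $\|f\|_{LM_{X,q}^{\lambda}}$. The right-hand side is a weighted right-maximal quantity, so the whole lemma reduces to a Hardy-type comparison between $\phi$ and this maximal quantity. The upper inequality $\|f\|_{LM_{X,q}^{\lambda}}\le\|f\|_{\widetilde{LM}_{X,q}^{\lambda}}$ then drops out at once (with constant $1$ for every $q\in(0,\infty]$) by taking $\rho=r$ inside the supremum, which gives $\sup_{\rho\ge r}(r/\rho)^{n-\lambda}\phi(\rho)\ge\phi(r)$.

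For $q=\infty$ I will exchange the two suprema: since $(r/\rho)^{n-\lambda}\le 1$ whenever $r\le\rho$, with equality at $r=\rho$,
\[
\sup_{r>0}\sup_{\rho\ge r}\bigl(r/\rho\bigr)^{n-\lambda}\phi(\rho)=\sup_{\rho>0}\phi(\rho)\sup_{0<r\le\rho}\bigl(r/\rho\bigr)^{n-\lambda}=\sup_{\rho>0}\phi(\rho),
\]
so the two norms coincide exactly. For $q<\infty$ the decisive step is a pointwise bound on $H(r):=\sup_{\rho\ge r}g(\rho)$, where $g(\rho):=\rho^{-n}\|f\chi_{B(0,\rho)}\|_X$. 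Axiom (ii) of a ball quasi-Banach function space applied to nested balls forces $\rho\mapsto\|f\chi_{B(0,\rho)}\|_X$ to be non-decreasing, hence $\rho^n g(\rho)$ is non-decreasing. Picking $\rho^{*}\ge r$ with $g(\rho^{*})$ arbitrarily close to $H(r)$, this monotonicity yields $g(\rho)\ge(\rho^{*}/\rho)^n g(\rho^{*})$ for $\rho\ge\rho^{*}$; raising to the $q$-th power and integrating against $d\rho/\rho$ on $[\rho^{*},\infty)$ produces
\[
H(r)^q\le nq\int_{r}^\infty g(\rho)^q\,\frac{d\rho}{\rho}.
\]

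Inserting this into $\|f\|_{\widetilde{LM}_{X,q}^{\lambda}}^q=\int_0^\infty r^{(n-\lambda)q-1}H(r)^q\,dr$ and swapping the $r$- and $\rho$-integrals by Fubini collapses the inner $r$-integral to $\int_0^\rho r^{(n-\lambda)q-1}dr=\rho^{(n-\lambda)q}/((n-\lambda)q)$, leaving $\|f\|_{\widetilde{LM}_{X,q}^{\lambda}}^q\le\tfrac{n}{n-\lambda}\int_0^\infty\rho^{(n-\lambda)q-1}g(\rho)^q\,d\rho=\tfrac{n}{n-\lambda}\|f\|_{LM_{X,q}^{\lambda}}^q$. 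Taking $q$-th roots and combining with the trivial bound yields the identification of the two spaces together with explicit constants of the form announced. The one non-routine step is the pointwise estimate $H(r)^q\le nq\int_r^\infty g^q\,d\rho/\rho$: it is exactly the non-decreasing monotonicity of $\|f\chi_{B(0,\rho)}\|_X$---unavailable for a generic quasi-Banach norm---that lets one trade a supremum for an integral, and this is the main obstacle. The remaining ingredients (the sup-swap for $q=\infty$, the Fubini interchange, and the final $q$-th root) are purely routine.
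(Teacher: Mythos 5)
Your argument is correct and follows essentially the same route as the paper: both proofs exploit the monotonicity of $\rho\mapsto\|f\chi_{B(0,\rho)}\|_X$ (axiom (ii) of a ball quasi-Banach function space) to trade the supremum in $\|\cdot\|_{\widetilde{X}}$ for an integral, then finish with Fubini, and the trivial direction ($\rho=r$) and the $q=\infty$ sup-swap are handled identically. The only discrepancy is in the explicit constant: your pointwise bound $H(r)^q\le nq\int_r^\infty g^q\,d\rho/\rho$ followed by Fubini yields $\left(\tfrac{n-\lambda}{n}\right)^{1/q}$ rather than the stated $\left(\tfrac{(n-\lambda)q}{n}\right)^{1/q}$ (a factor of $q^{1/q}$), which is immaterial for the identification of the two spaces and for every subsequent use of the lemma.
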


\begin{proof}
	For $ q<\infty $,
	$$
	\begin{aligned}
	\|f\|_{\widetilde{L M}_{X, q}^{\lambda}}^{q}&=\int_{0}^{\infty}\left(r^{-\lambda+n}\sup _{\rho \geq r}\rho^{-n} \|f \chi_{ B(0,\rho)}\|_{X}\right)^{q} \frac{d r}{r} \\
	&=n \int_{0}^{\infty}\left(r^{-\lambda+n} \sup _{\rho \geq r}  \int_{\rho}^{\infty} t^{-n} \frac{d t}{t}  \|f \chi_{ B(0,\rho)}\|_{X} \right)^{q} \frac{d r}{r} \\&
	\leq n \int_{0}^{\infty} r^{-\lambda q+q n} \sup _{\rho \geq r}\left(\int_{\rho}^{\infty} t^{-qn}
	\|f \chi_{ B(0, t)}\|_{X }^q \frac{d t}{t}\right) \frac{d r}{r} \\
	&=n\int_{0}^{\infty} r^{\left(-\lambda+n\right) q}\left(\int_{r}^{\infty} t^{-qn}	\|f \chi_{ B(0, t)}\|_{X }^q \frac{d t}{t}\right) \frac{d r}{r} \\
	&=n \int_{0}^{\infty} t^{-qn}	\|f \chi_{ B(0, t)}\|_{X }^q\left(\int_{0}^{t} r^{\left(-\lambda+n\right) q} \frac{d r}{r}\right) \frac{d t}{t} \\
	&=\frac{n}{nq-\lambda q}\|f\|_{L M_{X, q}^{\lambda}}^{q}.
	\end{aligned}$$
	If  $q=\infty $, then	
	$$\begin{aligned}
	\|f\|_{\widetilde{L M}_{X, \infty}^{\lambda}}&=\sup_{r>0} r^{-\lambda} \left\| f \chi_{B(0, t)}\right\|_{\widetilde{X}} \\
	&=\sup _{r>0} r^{-\lambda} r^{n} \sup_{r\leq t} t^{-n}\|f \chi_{ B(0, t)}\|_{X} \\
	&=\sup _{t>0} t^{-\lambda}\|f \chi_{ B(0, t)}\|_{X}\\
	&=\|f\|_{L M_{X, \infty}}^\lambda.
	\end{aligned}$$
\end{proof}
\begin{theorem}
	Let $ 0<q_{0}, q_{1}, q \leq \infty $, $0 \leq \lambda_{0},\lambda_{1},\lambda < \sigma$,  $0<\theta<1 $ and  $ \lambda=(1-\theta) \lambda_{0}+\theta \lambda_{1} $. Then
	
	$$\left(L M_{X, q_{0}}^{\lambda_{0}}, L M_{X, q_{1}}^{\lambda_{1}}\right)_{\theta, q}=L M_{X, q}^{\lambda}$$

\end{theorem}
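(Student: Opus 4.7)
My plan is to reduce this interpolation identity to a weighted $L^q$-interpolation in the radial variable, using the equivalent quasinorm from Lemma 4.1.

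First, by Lemma 4.1 it suffices to prove the identity with each $LM_{X,q_i}^{\lambda_i}$ replaced by the equivalent $\widetilde{LM}_{X,q_i}^{\lambda_i}$. The reason for this reduction is that
$$\phi_f(r):=\sup_{\rho\geq r}\rho^{-n}\|f\chi_{B(0,\rho)}\|_X$$
is non-increasing in $r$, and
$$\|f\|_{\widetilde{LM}_{X,q}^\lambda}=\left(\int_0^\infty\bigl(r^{n-\lambda}\phi_f(r)\bigr)^q\frac{dr}{r}\right)^{1/q}.$$
Thus $f\mapsto\phi_f$ embeds $\widetilde{LM}_{X,q_i}^{\lambda_i}$ isometrically into the weighted Lebesgue space $L^{q_i}((0,\infty),w_i(r)\,dr)$ with $w_i(r)=r^{(n-\lambda_i)q_i-1}$, and the image lies in the cone $\mathcal{C}$ of non-increasing functions on $(0,\infty)$.

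Second, I would establish the K-functional equivalence
$$K\bigl(t,f;\widetilde{LM}_{X,q_0}^{\lambda_0},\widetilde{LM}_{X,q_1}^{\lambda_1}\bigr)\sim K\bigl(t,\phi_f;L^{q_0}(w_0),L^{q_1}(w_1)\bigr).$$
The upper bound is immediate from the sublinearity $\phi_{f_0+f_1}\leq\phi_{f_0}+\phi_{f_1}$. For the lower bound, starting from a near-optimal splitting $\phi_f=\psi_0+\psi_1$ on $\mathcal{C}$, I would produce a corresponding spatial decomposition $f=f\chi_{B(0,R)}+f\chi_{\mathbb{R}^n\setminus B(0,R)}$ with $R=R(t)$ determined by the crossover level of $\psi_0$ and $\psi_1$; property (ii) of ball quasi-Banach function spaces, together with the monotonicity of $\phi_f$ and the hypothesis $\lambda_i<\sigma$, then lets one control the two $\widetilde{LM}_{X,q_i}^{\lambda_i}$-quasinorms by the weighted $L^{q_i}$-norms of $\psi_0$ and $\psi_1$.

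Third, I would apply the classical interpolation for weighted Lebesgue spaces restricted to $\mathcal{C}$ (the Holmstedt-type formula on the cone of monotone functions due to Sagher and refined by Sinnamon and Ari\~{n}o--Muckenhoupt) to obtain
$$\bigl(L^{q_0}(w_0),L^{q_1}(w_1)\bigr)_{\theta,q}\big|_{\mathcal{C}}=L^q(w)\big|_{\mathcal{C}},\qquad w(r)=r^{(n-\lambda)q-1}.$$
What allows the third parameter $q$ to be free is precisely the sharp K-functional available on the monotone cone. Combining this with the K-functional equivalence from the previous step and Lemma 4.1 yields the desired identity.

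The main obstacle is the lower bound in the K-functional step. Because $f\mapsto\phi_f$ is only sublinear, no standard retract argument applies, and the spatial truncation has to be carried out so that the $\widetilde{X}$-estimates of the two pieces match the prescribed pieces $\psi_0,\psi_1$ of the decomposition of $\phi_f$ up to constants depending only on $\lambda_0,\lambda_1$ and $n$.
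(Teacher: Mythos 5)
Your plan is viable but takes a genuinely different route from the paper. The paper's proof is a direct, hands-on argument in the style of Burenkov--Nursultanov: for the inclusion $(LM_{X,q_0}^{\lambda_0},LM_{X,q_1}^{\lambda_1})_{\theta,q}\subset LM_{X,q}^{\lambda}$ it shows that \emph{every} decomposition $f=\varphi+\psi$ gives the pointwise bound $r^{-\lambda}\|f\chi_{B(0,r)}\|_{\widetilde X}\lesssim r^{-\theta(\lambda_1-\lambda_0)}K(r^{\lambda_1-\lambda_0},f)$ (this is the role your ``sublinearity'' remark plays); for the reverse inclusion it takes the explicit spatial truncation $\varphi_t=f\chi_{B(0,t)}$, $\psi_t=f-\varphi_t$, computes $\|\varphi_t\|_{\widetilde{LM}_{X,q_0}^{\lambda_0}}$ and $\|\psi_t\|_{\widetilde{LM}_{X,q_1}^{\lambda_1}}$ using the explicit formula for $\|\cdot\|_{\widetilde X}$ on truncated functions, and finishes with Hardy's inequality applied to the resulting double integrals $J_1,J_2$. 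You instead propose to transfer the whole problem, via $f\mapsto\phi_f$, to interpolation of weighted Lebesgue spaces on the cone of non-increasing functions and invoke a Holmstedt/Sagher/Sinnamon-type result there. What your approach buys is conceptual clarity (the free third parameter $q$ is explained by the sharp $K$-functional on the monotone cone) and potential reusability; what the paper's approach buys is self-containedness and explicit constants, with no need for cone-interpolation machinery (which must anyway be checked for the quasi-Banach range $0<q_i<1$ and $q_i=\infty$ permitted here).

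Two caveats before I would accept your version as a proof. First, the step you yourself flag as the main obstacle --- showing that truncation at the crossover radius $R(t)$ turns a near-optimal splitting of $\phi_f$ into a near-optimal splitting of $f$ --- is exactly where all the content is: carrying it out requires the identities $\|\psi_t\chi_{B(0,s)}\|_{\widetilde X}=(s/t)^n\|f\chi_{B(0,t)}\|_X$ for $s<t$ and $\|\varphi_t\chi_{B(0,s)}\|_{\widetilde X}=\|f\chi_{B(0,t)}\|_X$ for $s>t$, together with the hypotheses $\lambda_i<\sigma$ (so that the powers $s^{(n-\lambda_i)q_i}$ integrate near $0$); this is precisely the paper's computation in steps 1.2.2--1.2.3, so your route does not avoid it. Second, there are small unaddressed points: the sublinearity direction only gives $\phi_f\leq\phi_{f_0}+\phi_{f_1}$, and the induced splitting $\psi_1=\max(0,\phi_f-\phi_{f_0})$ need not be monotone, so you must justify that the $K$-functional on the cone is comparable to the unrestricted one for this lattice couple; and the case $\lambda_0=\lambda_1$ (where the change of variables $s=t^{\lambda_1-\lambda_0}$ degenerates) should be treated separately, as should $q=\infty$.
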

\begin{proof}
	1. If $q<\infty$, 1.1 first, let us prove that
	$$
	\left(L M_{X, q_{0}}^{\lambda_{0}}, L M_{X, q_{1}}^{\lambda_{1}}\right)_{\theta, q} \subset L M_{X, q}^{\lambda}	
	$$
	for $ \lambda_{0}<\lambda_{1} $. Let $ f \in\left(L M_{X, q_{0}}^{\lambda_{0}}, L M_{X, q_{1}}^{\lambda_{1}}\right)_{\theta, q} $ and $  f=\varphi+\psi $ with  $\varphi \in L M_{X, q_{0}}^{\lambda_{0}}  $ and $ \psi \in L M_{X, q_{1}}^{\lambda_{1}} $. According to Proposition 2.1 and Lemmas 4.1, then 
	$$
	\begin{aligned}
	r^{-\lambda}\|f \chi_{ B(0, r)}\| _{\widetilde{X}}&\lesssim r^{-\lambda}\left(\|\varphi \chi_{ B(0, r)}\|_{\widetilde{X}}+\|\psi \chi_{ B(0, r)}\|_{\widetilde{X}}\right)\\
	& = r^{\lambda_{0}-\lambda}\left( r^{-\lambda_0}\|\varphi \chi_{ B(0, r)}\|_{\widetilde{X}}+r^{\lambda_{1}-\lambda_{0}} r^{-\lambda_{1}}\|\psi \chi_{ B(0, r)}\|_{\widetilde{X}}\right) \\
	&\lesssim  r^{\lambda_{0}-\lambda}\left( \mathop{sup}\limits_{s>0} s^{-\lambda_0}\|\varphi \chi_{ B(0, r)}\|_{\widetilde{X}}+r^{\lambda_{1}-\lambda_{0}}\mathop{sup}\limits_{s>0} s^{-\lambda_{1}}\|\psi \chi_{ B(0, r)}\|_{\widetilde{X}}\right)\\
	&=r^{\lambda_{0}-\lambda}\left(\|\varphi\|_{\widetilde{L M}_{X, \infty}^{\lambda_{0}}}+r^{\lambda_{1}-\lambda_{0}}\|\psi\|_{\widetilde{L M}_{X, \infty}^{\lambda}}\right)\\
				\end{aligned}
	$$
	$$
	\begin{aligned}
	\quad~~~ \quad~~~\quad~
	&\lesssim  r^{\lambda_{0}-\lambda}\left(\|\varphi\|_{L M_{X, \infty}^{\lambda_{0}}}+r^{\lambda_{1}-\lambda_{0}}\|\psi\|_{L M_{X, \infty}^{\lambda}}\right)\\
&\lesssim r^{\lambda_{0}-\lambda}\left((\lambda_{0} q_0)^{\frac{1}{q_0}}\|\varphi\|_{L M_{X, q_0}^{\lambda_{0}}}+(\lambda_{1} q_1)^{\frac{1}{q_1}}r^{\lambda_{1}-\lambda_{0}}\|\psi\|_{L M_{X, q_1}^{\lambda_1}}\right)\\
	&\lesssim r^{-\theta (\lambda_{1}-\lambda_{0})}\left(\|\varphi\|_{L M_{X,q_0}^{\lambda_{0}}}+r^{\lambda_{1}-\lambda_{0}}\|\psi\|_{L M_{X, q_1}^{\lambda_1}}\right)
	\end{aligned}
	$$
	Since the representation $ f=\varphi+\psi$  is arbitrary, 
	$$
	r^{-\lambda}\|f\|_{\widetilde{X}(B(0, r))} \lesssim  r^{-\theta \left(\lambda_{1}-\lambda_{0}\right)} K\left(r^{\lambda_{1}-\lambda_{0}}, f\right),
	$$
	where \cite{JE1972}
	$$
	K(t,f)=\inf _{\substack{f=\varphi+\psi \\ \varphi \in L M_{X, q_{0}}^{\lambda_{0}}, \psi \in L M_{X, q_{1}}^{\lambda_{1}}}}\left(\|\varphi\|_{L M_{X, q_{0}}^{\lambda_{0}}}+t\|\psi\|_{L M_{X, q_{1}}^{\lambda_{1}}}\right), \quad t>0 .
	$$
	Hence,	
	$$
	\begin{aligned}
	\|f\|_{L M_{X, q}^{\lambda}} &\leq\|f\|_{\widetilde{L M}_{X, q}^{\lambda}}=\left(\int_{0}^{\infty}\left(r^{-\lambda}\|f\chi_{ B(0, r)}\|_{\widetilde{X}}\right)^{q} \frac{d r}{r}\right)^{\frac{1}{q}} \lesssim \left(\int_{0}^{\infty}\left(r^{-\theta\left(\lambda_{1}-\lambda_{0}\right)} K\left(r^{\lambda_{1}-\lambda_{0}}, f\right)\right)^{q} \frac{d r}{r}\right)^{\frac{1}{q}}\\ 
&\sim \left(\lambda_{1}-\lambda_{0}\right)^{-\frac{1}{q}}\left(\int_{0}^{\infty}\left(t^{-\theta} K(t, f)\right)^{q} \frac{d t}{t}\right)^{\frac{1}{q}}\sim \left(\lambda_{1}-\lambda_{0}\right)^{-\frac{1}{q}}\|f\|_{\left(L M_{X, q_{0}}^{\lambda_{0}}, L M_{X, q_{1}}^{\lambda_{1}}\right)_{\theta, q}}.
\end{aligned} $$
	
	1.2. On the contrary, let us prove that
	$$
	L M_{X, q}^{\lambda} \subset\left(L M_{X, q_{0}}^{\lambda_{0}}, L M_{X, q_{1}}^{\lambda_{1}}\right)_{\theta, q}
	$$
	for  $\lambda_{0}<\lambda_{1} .$ Set  $\widetilde{q}_{0}=\min \left\{q_{0}, q\right\} $ and $ \widetilde{q}_{1}=\min \left\{q_{1}, q\right\}$. It suffices to prove that
	$$
	L M_{X, q}^{\lambda} \subset\left(L M_{X, \widetilde{q}_{0}}^{\lambda_{0}}, L M_{X, \widetilde{q}_{1}}^{\lambda_{1}}\right)_{\theta, q},
	$$
	because
	$$
	\left(L M_{X, \widetilde{q_{0}}}^{\lambda_{0}}, L M_{X, \widetilde{q_{1}}}^{\lambda_{1}}\right)_{\theta, q} \subset\left(L M_{X, q_{0}}^{\lambda_{0}}, L M_{X, q_{1}}^{\lambda_{1}}\right)_{\theta, q}
	$$
	by Proposition 2.1. Therefore, without loss of generality we assume that $ 0<q_{0}, q_{1} \leq q \leq \infty $.
	\par 
	First, suppose that $ q<\infty$.
	\par 
	1.2.1. Let $ f \in L M_{X, q}^{\lambda} $ and $ t>0 $. For $ x \in \mathbb{R}^{n}$ , set
	
	\begin{equation} \label{222}
	\varphi_{t}(x)=\left\{\begin{array}{ll}
	f(x) & \text { if } x \in B(0, t), \\
	0 & \text { if } x \notin B(0, t)
	\end{array} \quad \text { and } \quad \psi_{t}(x)=f-\varphi_{t}(x) .\right.
	\end{equation}
	Then, applying the change of variables $  t^{\lambda_{1}-\lambda_{0}}=s $, 
	$$
	\begin{aligned}
	\|f\|_{\left(L M_{X, q_{0}}^{\lambda_{0}}, {L M}_{X, q_{1}}^{\lambda_{1}}\right)_{\theta, q}} &\leq\|f\|_{\left(\widetilde{L M}_{X, q_0}^{\lambda{0}}, \widetilde{L M}_{X, q_{1}}^{\lambda_{1}}\right)_{\theta, q} }=\left(\int_{0}^{\infty}\left(s^{-\theta} \widetilde{K}(s, f)\right)^{q} \frac{d s}{s}\right)^{\frac{1}{q}}\\
	&=\left(\lambda_{1}-\lambda_{0}\right)^{\frac{1}{q}}\left(\int_{0}^{\infty}\left(t^{-\left(\lambda_{1}-\lambda_{0}\right) \theta} \tilde{K}\left(t^{\lambda_{1}-\lambda_{0}}, f\right)\right)^{q} \frac{d t}{t}\right)^{\frac{1}{q}}.
	\end{aligned}
	$$
	where
	$$
	\begin{aligned}
	\widetilde{K}(t^{\lambda_{1}-\lambda_{0}},f)&=\inf _{\substack{f=\varphi+\psi \\ \varphi \in \widetilde{L M}_{X, q_{0}}^{\lambda_{0}}, \psi \in \widetilde{L M}_{X, q_{1}}^{\lambda_{1}}}}\left(\|\varphi\|_{\widetilde{L M}_{X, q_{0}}^{\lambda_{0}}}+t^{\lambda_{1}-\lambda_{0}}\|\psi\|_{\widetilde{L M}_{X, q_{1}}^{\lambda_{1}}}\right)\\
	&\leq \|\varphi\|_{\widetilde{L M}_{X, q_{0}}^{\lambda_{0}}}+t^{\lambda_{1}-\lambda_{0}}\|\psi\|_{\widetilde{L M}_{X, q_{1}}^{\lambda_{1}}}.
	\end{aligned}
	$$
	\par 
	1.2.2. Let us estimate $  \left\|\psi_{t}\right\|_{\widetilde{L M}_{X, q_{1}}^{\lambda_{1}}} $. Since $ \left|\psi_{t}(x)\right| \leq|f(x)| $, 
	$$
	\begin{aligned}
	\left\|\psi_{t}\right\|_{\widetilde{L M}_{X, q_{1}}^{\lambda_{1}}}&=\left(\int_{0}^{\infty}\left(s^{-\lambda_{1}}\left\|\psi_{t} \chi_{B(0, s)}\right\|_{\tilde{L}_{X}}\right)^{q_{1}} \frac{d s}{s}\right)^{\frac{1}{q_{1}}} \\
	&\lesssim \left(\left(\int_{0}^{t}\left(s^{-\lambda_{1}}\left\|\psi_{t}\chi_{B(0, s)}\right\|_{\tilde{L}_{X}}\right)^{q_{1}} \frac{d s}{s}\right)^{\frac{1}{q_{1}}}+\left(\int_{t}^{\infty}\left(s^{-\lambda_{1}}\|f\chi_{B(0, s)}\|_{\tilde{L}_{X}}\right)^{q_{1}} \frac{d s}{s}\right)^{\frac{1}{q_{1}}}\right) .
	\end{aligned}
	$$
	Since \eqref{222}
	$$
	\left\|\psi_{t}\chi_{B(0, r)}\right\|_{\widetilde{X}}=s^{n} \sup _{\rho \geq s} \rho^{-n} \|  f \chi_{ B(0, \rho)}\|_{X}=\left( \frac{s}{t}\right)^n\|f \chi_{ B(0, t)}\|_{X} ~~~~~~0<s<t.
	$$
 Therefore, \\
	$$\begin{aligned}
	\left(\int_{0}^{t}\left(s^{-\lambda_{1}}\left\|\psi_{t} \chi_{B(0, s)}\right\|_{\widetilde{X}}\right)^{q_{1}} \frac{d s}{s}\right)^{\frac{1}{q_{1}}} &\leq t^{-n}\|f \chi_{ B(0, t)}\|_{\widetilde{X}}\left(\int_{0}^{t} s^{\left(n-\lambda_{1}\right) q_{1}} \frac{d s}{s}\right)^{\frac{1}{q_{1}}}\\
	&=\left(\left(n-\lambda_{1}\right) q_{1}\right)^{-\frac{1}{q_{1}}} t^{-\lambda_{1}}\|f \chi_{ B(0, t)}\|_{\widetilde{X}}\\
	&=C_{1} t^{\lambda_{0}-\lambda_{1}-n}\|f \chi_{ B(0, t)}\|_{\widetilde{X}}\left(\int_{0}^{t} s^{\left(n -\lambda_{0}\right) q_{0}} \frac{d s}{s}\right)^{\frac{1}{q_{0}}}\\
	&=C_{1} t^{\lambda_{0}-\lambda_{1}}\left(\int_{0}^{t}\left(s^{-\lambda_{0}} s^{n} \mathop{sup}\limits_{\rho \geq t} \| \rho^{n } f\chi_{ B(0, \rho)} \|_{X }\right)^{q_{0}} \frac{d s}{s}\right)^{\frac{1}{q_{0}}}\\
	&\leq C_{1} t^{\lambda_{0}-\lambda_{1}}\left(\int_{0}^{t}\left(s^{-\lambda_{0}}\|f\chi_{ B(0, s)}\|_{\widetilde{X}}\right)^{q_{0}} \frac{d s}{s}\right)^{\frac{1}{q_{0}}},\\
	\end{aligned}$$	
	$
	\text { where } C_{1}=\left(\left(n-\lambda_{1}\right) q_{1}\right)^{-1 / q_{1}}\left(\left(n-\lambda_{0}\right) q_{0}\right)^{1 / q_{0}} \text {. }
	$\par 
	Thus,
	
	$$
	\begin{aligned}
	\left\|\psi_{t}\right\|_{\widetilde{L M}_{X, q_{1}}^{\lambda}} 
&\lesssim \left( t^{\lambda_{0}-\lambda_{1}}\left(\int_{0}^{t}\left(s^{-\lambda_{0}}\|f\|_{\widetilde{X}(B(0, s))}\right)^{q_{0}} \frac{d s}{s}\right)^{\frac{1}{q_{0}}}+\left(\int_{t}^{\infty}\left(s^{-\lambda_{1}}\|f\|_{\widetilde{X}(B(0, s))}\right)^{q_{1}} \frac{d s}{s}\right)^{\frac{1}{q_{1}}}\right)\\
&\lesssim C_{2}\left(I_{1}(t)+I_{2}(t)\right),
	\end{aligned}
	$$
	where  $C_{2}=\max \left\{C_{1}, 1\right\}$ 
	\par 
	1.2.3. Let us estimate 
	$ \left\|\varphi_{t}\right\|_{\widetilde{L M}_{X, q_{0}}^{\lambda_{0}}} $. Since \eqref{222},
	$$
	\begin{aligned}
	\left\|\varphi_{t}\right\|_{\widetilde{L M}_{X, q_{0}}^{\lambda_{0}}}&=\left(\int_{0}^{\infty}\left(s^{-\lambda_{0}}\left\|\varphi_{t} \chi_{ B(0, s)}\right\|_{\widetilde{X}}\right)^{q_{0}} \frac{d s}{s}\right)^{\frac{1}{q_{0}}}	\quad~~~ \quad~~~\quad~~~ \quad	~~~ \quad\quad~~~ \quad~~~\quad~~~ \quad\\ 
				\end{aligned}
	$$
	$$
	\begin{aligned}
	\quad~~~ \quad~~~\quad~~~ \quad
	&\lesssim \left(\left(\int_{0}^{t}\left(s^{-\lambda_{0}}\|f\chi_{ B(0, s)}\|_{\widetilde{X}}\right)^{q_{0}} \frac{d s}{s}\right)^{\frac{1}{q_{0}}}+\left(\int_{t}^{\infty}\left(s^{-\lambda_{0}}\left\|\varphi_{t}\chi_{ B(0, s)}\right\|_{\widetilde{X}}\right)^{q_{0}} \frac{d s}{s}\right)^{\frac{1}{q_{0}}}\right) .
	\end{aligned} 
	$$
and
	$$
\left\|\psi_{t}\chi_{ B(0, s)}\right\|_{\tilde{X}}=s^{n} \sup _{\rho \geq s} \rho^{-n} \|  f \chi_{ B(0,  \rho)}\|_{X}=\|f\chi_{ B(0, t)}\|_{X }~~~~s>t.
$$
Therefore,
	$$
	\begin{aligned}
	\left(\int_{t}^{\infty}\left(s^{-\lambda_{0}}\left\|\varphi_{t} \chi_{ B(0, s)}\right\|_{\tilde{L}_{X}}\right)^{q_{0}} \frac{d s}{s}\right)^{\frac{1}{q_{0}}}&=\left(\lambda_{0} q_{0}\right)^{-\frac{1}{q_{0}}} t^{-\lambda_{0}}\|f\chi_{ B(0,  t)}\|_{X } \\
	&=C_{3}\left(\int_{0}^{t}\left(s^{n-\lambda_{0}} t^{-n}\|f\chi_{ B(0, t)}\|_{X }\right)^{q_{0}} \frac{d s}{s}\right)^{\frac{1}{q_{0}}} \\
	&\leq C_{3}\left(\int_{0}^{t}\left(s^{-\lambda_{0}} s^{n} \mathop{sup}\limits_{\rho \geq s} \rho^{-n}\|f\chi_{ B(0, \rho)}\|_{X }\right)^{q_{0}} \frac{d s}{s}\right)^{\frac{1}{q_{0}}}\\
	&=C_{3}\left(\int_{0}^{t}\left(s^{-\lambda_{0}}\|f \chi_{ B(0, s)}\|_{\tilde{X}}\right)^{q_{0}} \frac{d s}{s}\right)^{\frac{1}{q_{0}}}:=I_{3},
	\end{aligned}
	$$
	where  $C_{3}=\left(\frac{n-\lambda_{0}}{\lambda_{0} p_0}\right)^{\frac{1}{q_{0}}} $.\par
	Hence,$$
	\left\|\varphi_{t}\right\|_{\widetilde{L M}_{X, q_{0}}^{\lambda_{0}}} \leq C_{4} I_{3}(t), ~~~~C_{4}=1+C_{3} $$.

	1.3. So,
	$$
	\widetilde{K}\left(t^{\lambda_{1}-\lambda_{0}}, f\right) \leq C_{5}\left(I_{3}(t)+t^{\lambda_{1}-\lambda_{0}} I_{2}(t)\right),
	$$
	where $ C_{5}=C_{2}+C_{4} $, and
	$$
	\begin{aligned}
	\|f\|_{\left(L M_{X, q 0}^{\lambda_{0}} L M_{X, q_{1}}^{\lambda_{1}}\right)_{\theta, q}} 
	&\lesssim \left( \left(\int_{0}^{\infty}\left(t^{-q\left(\lambda_{1}-\lambda_{0}\right)} I_{3}(t)\right)^{q} \frac{d t}{t}\right)^{\frac{1}{q}} + \left(\int_{0}^{\infty}\left(t^{(1-q)\left(\lambda_{1}-\lambda_{0}\right)} I_{2}(t)\right)^{q} \frac{d t}{t}\right)^{\frac{1}{q}}\right)
	\\
	&\lesssim \left(C_{5}\left(J_{1}+J_{2}\right)  \right).
		\end{aligned}
	$$
	Note that
	$$
	J_{1}^{q_{0}}=\left(\int_{0}^{\infty}\left(t^{-\theta\left(\lambda_{1}-\lambda_{0}\right) q_{0}} \int_{0}^{t} s^{-\lambda_{0} q_{0}-1}\|f \chi_{B(0, s)}\|_{\tilde{X}}^{q_{0}} d s\right)^{\frac{q}{q_{0}}} \frac{d t}{t}\right)^{\frac{q_{0}}{q}}
	$$$$
	J_{2}^{q_{1}}=\left(\int_{0}^{\infty}\left(t^{(1-\theta)\left(\lambda_{1}-\lambda_{0}\right) q_{1}} \int_{t}^{\infty} s^{\lambda_{1} q_{1}-1}\|f\chi_{B(0, s)}\|_{\tilde{X}}^{q_{1}} d s\right)^{\frac{q}{q_{1}}} \frac{d t}{t}\right)^{\frac{q_{1}}{q}}
$$
	1.4. Applying the Hardy inequality in the form
	$$
	\left(\int_{0}^{\infty}\left(t^{-\alpha} \int_{0}^{t} g(s) d s\right)^{\sigma} \frac{d t}{t}\right)^{\frac{1}{\sigma}} \leq \frac{1}{\alpha}\left(\int_{0}^{\infty}\left(t^{1-\alpha} g(t)\right)^{\sigma} \frac{d t}{t}\right)^{\frac{1}{\sigma}},
	$$
	where $ \sigma \geq 1$,$ \alpha>0 $, and $ g $ is a nonnegative measurable function on  $(0, \infty) $,
	$$
	J_{1}^{q_{0}} \leq\left(\theta\left(\lambda_{1}-\lambda_{0}\right) q_{0}\right)^{-1}\left(\int_{0}^{\infty}\left(t^{-\lambda}\|f\chi_{ B(0, s)}\|_{\widetilde{X}}\right)^{q} \frac{d t}{t}\right)^{\frac{q_{0}}{q}}
	$$
	which implies, according to Lemma 4.1, that
	$$
	J_{1} \leq\left(\theta\left(\lambda_{1}-\lambda_{0}\right) q_{0}\right)^{-\frac{1}{q_{0}}}\|f\|_{\widetilde{L M}_{X, q}^{\lambda}} \leq\left(\theta\left(\lambda_{1}-\lambda_{0}\right) q_{0}\right)^{-\frac{1}{q_{0}}}\left(\frac{n p}{n-\lambda p}\right)^{\frac{1}{q}}\|f\|_{L M^{\lambda}_{X, q}}
	$$
	Similar considerations yield
	$$
	J_{2} \leq\left((1-\theta)\left(\lambda_{1}-\lambda_{0}\right) q_{1}\right)^{-\frac{1}{q_{1}}}\left(\frac{n p}{n-\lambda p}\right)^{\frac{1}{q}}\|f\|_{L M_{X, q}^{\lambda}}
	$$
	Thus, the theorem is proved under the additional assumptions $ \lambda_{0}<\lambda_{1}$  and  $q<\infty $.
	\par 
	 2. If  $q=\infty$,  the proof follows the same lines in which the integrals should be replaced by the corresponding upper bounds.	
	
	If  $\lambda_{1}<\lambda_{0}$ , then one should take into account that$$
	\|f\|_{\left(L M_{X, q_{0}}^{\lambda_{0}}, L M_{X, \tilde{q}_{1}}^{\lambda_{1}}\right)_{\theta, q}}=\left(\int_{0}^{\infty}\left(t^{-\theta} \inf _{\substack{f=\varphi+\psi \\ \varphi \in L M_{X, q_{0}}^{\lambda}, \psi \in L M_{X, q_{1}}^{\lambda}}}\left(\|\varphi\|_{L M_{X, q_{0}}^{\lambda_{0}}}+t\|\psi\|_{L M_{X, q_{1}}^{\lambda_{1}}}\right)\right)^{q} \frac{d t}{t}\right)^{\frac{1}{q}}.$$ 
	According to what has been proved above, this expression is equivalent (with constants depending only on the parameters  $q_{0}$,$ q_{1}$, $X$, $\lambda_{0}$, $\lambda_{1} $, and $ q $) to the quasinorm  $\|f\|_{L M_{X, q}^{\mu}}$, where
	$$
	\mu=(1-(1-\theta)) \lambda_{1}+(1-\theta) \lambda_{0}=\lambda,
	$$
	to the quasinorm  $\|f\|_{L M_{X, q}^{\lambda}}$. \end{proof}
\section{Vector valued maximal inequalities}
\begin{Assumption} 
	Let $ 1<u \leq \infty $, for every sequence  $\left\{f_{j}\right\}_{j=1}^{\infty} \subset L^{loc}_1\left(\mathbb{R}^{n}\right) $,  $X \in \mathbb{M}$. If
	$$
	\left\|\left(\sum_{j=1}^{\infty}\left[M f_{j}\right]^{u}\right)^{\frac{1}{u}}\right\|_{X} \lesssim\left\|\left(\sum_{j=1}^{\infty}\left|f_{j}\right|^{u}\right)^{\frac{1}{u}}\right\|_{X}.
	$$
	Especially $u = \infty$,
$$
	\left\|M\left[
	\mathop{sup}\limits_{j \in \mathbb{N}}|f_j|\right]\right\|_{X} \lesssim\left\|\mathop{sup}\limits_{j \in \mathbb{N}}|f_j|\right\|_{X}.
$$
	Then $X \in \mathbb{M}'$.
\end{Assumption}
\begin{theorem}
	Let $ 1<q \leq \infty,0\leq \lambda < \sigma,  1<v<\infty, X \in \mathbb{M}'$ . Then 
	\begin{equation}\label{3}
	\left\|\left(\sum_{j=1}^{\infty}\left(M f_{j}\right)^{v}\right)^{\frac{1}{v}}\right\|_{L M_{X, q}^{\lambda}} \lesssim\left\|\left(\sum_{j=1}^{\infty}\left|f_{j}\right|^{v}\right)^{\frac{1}{v}}\right\|_{L M_{X, q}^{\lambda}}.
	\end{equation}
	In particular,
	\begin{equation}\label{4}
	\left\|M\left[\sup _{j \in \mathbb{N}}\left|f_{j}\right|\right]\right\|_{L M_{X, q}^{\lambda}} \lesssim \left\|\sup _{j \in \mathbb{N}}\left|f_{j}\right|\right\|_{L M_{X, q}^{\lambda}}.
	\end{equation}
\end{theorem}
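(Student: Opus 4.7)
The plan is to follow the blueprint of Theorem 3.1, but carried out at the level of the Herz-type decomposition supplied by Remark 2.8. Write $\mathcal{M}F:=(\sum_{i}(Mf_{i})^{v})^{1/v}$ and $F:=(\sum_{i}|f_{i}|^{v})^{1/v}$, and let $A_{j}=B(0,2^{j})\setminus B(0,2^{j-1})$. By Remark 2.8,
\begin{equation*}
\|\mathcal{M}F\|_{LM_{X,q}^{\lambda}}^{q}\sim\sum_{j\in\mathbb{Z}}2^{-j\lambda q}\|\mathcal{M}F\,\chi_{A_{j}}\|_{X}^{q},\qquad\|F\|_{LM_{X,q}^{\lambda}}^{q}\sim\sum_{j\in\mathbb{Z}}2^{-j\lambda q}\|F\,\chi_{A_{j}}\|_{X}^{q}.
\end{equation*}
The goal is to exhibit $(\|\mathcal{M}F\chi_{A_{j}}\|_{X})_{j}$ as a discrete convolution of $(\|F\chi_{A_{j}}\|_{X})_{j}$ against a summable kernel, after which weighted Young closes the argument.

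The decomposition $f_{i}=\sum_{k}f_{i}\chi_{A_{k}}$, together with sublinearity of $M$ and the triangle inequality in $\ell^{v}$ (valid since $v>1$), yields $\mathcal{M}F\le\sum_{k}\mathcal{M}[F_{k}]$ with $F_{k}:=(f_{i}\chi_{A_{k}})_{i}$. For adjacent indices $|j-k|\le 1$, I plan to use the hypothesis $X\in\mathbb{M}'$ directly:
\begin{equation*}
\|\mathcal{M}[F_{k}]\chi_{A_{j}}\|_{X}\le\|\mathcal{M}[F_{k}]\|_{X}\lesssim\|F\chi_{A_{k}}\|_{X}.
\end{equation*}
For well-separated indices, say $j\ge k+2$, the distance between $A_{j}$ and $A_{k}$ is $\gtrsim 2^{j}$, giving the pointwise estimate $M(f_{i}\chi_{A_{k}})(y)\lesssim 2^{-jn}\int_{A_{k}}|f_{i}|$ for $y\in A_{j}$. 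The decisive step is then Minkowski's integral inequality, which because of the plain $L^{1}$ functional allows the $\ell^{v}(i)$-sum to pass inside and land on $F$: $\mathcal{M}[F_{k}](y)\lesssim 2^{-jn}\int_{A_{k}}F$. Hölder's inequality on the ball Banach function space, together with $\|\chi_{A_{k}}\|_{X'}\sim 2^{k(n-\sigma)}$ (which in turn follows from $\|\chi_{B(0,r)}\|_{X}=r^{\sigma}$ in Remark 2.6 via the standard duality pairing), converts this into $2^{-jn}\cdot 2^{k(n-\sigma)}\|F\chi_{A_{k}}\|_{X}$. Multiplying by $\|\chi_{A_{j}}\|_{X}\sim 2^{j\sigma}$ gives $\|\mathcal{M}[F_{k}]\chi_{A_{j}}\|_{X}\lesssim 2^{-(j-k)(n-\sigma)}\|F\chi_{A_{k}}\|_{X}$. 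The symmetric case $k\ge j+2$ produces the factor $2^{-(k-j)\sigma}$ by the same procedure.

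Assembling the three cases gives $\|\mathcal{M}F\chi_{A_{j}}\|_{X}\lesssim\sum_{k}d_{j-k}\|F\chi_{A_{k}}\|_{X}$ with kernel $d_{l}=2^{l\sigma}$ for $l\le -2$, $d_{l}\sim 1$ for $|l|\le 1$, and $d_{l}=2^{-l(n-\sigma)}$ for $l\ge 2$. Under the hypothesis $0\le\lambda<\sigma\le n$, the weighted kernel $(2^{-l\lambda}d_{l})_{l\in\mathbb{Z}}$ lies in $\ell^{1}$, so Young's convolution inequality in weighted $\ell^{q}$ delivers \eqref{3}. The endpoint \eqref{4} for $v=\infty$ is essentially free: the pointwise comparison $\sup_{i}Mf_{i}\le M[\sup_{k}|f_{k}|]$ followed by Theorem 3.1 applied to the scalar function $\sup_{k}|f_{k}|$ closes that case.

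The main obstacle I anticipate is the far-annulus estimate, because for a general ball Banach function space $X$ the quantities $(\sum_{i}\|f_{i}\chi_{A_{k}}\|_{X}^{v})^{1/v}$ and $\|F\chi_{A_{k}}\|_{X}$ need not be comparable, and the naive route through the vector-valued maximal bound becomes circular. The key is precisely the order of operations described above: Minkowski's integral inequality must be applied at the level of the $L^{1}$-averaged functions \emph{before} any $X$-norm appears, so that the $\ell^{v}$ sum is absorbed into $F$ at the integrand level rather than the norm level. A secondary technical point is that summing $\sum_{k}\mathcal{M}[F_{k}]$ uses the triangle inequality in $X$; this is supplied by the ball Banach structure of the concrete examples $L^{p}$, $L^{\vec{p}}$, $L^{p(\cdot)}$ identified in Remark 2.5, while the quasi-Banach case would require the analogous $u$-concavity of the norm.
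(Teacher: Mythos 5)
Your proof is essentially correct and rests on the same two pillars as the paper's argument --- the hypothesis $X\in\mathbb{M}'$ for the near part, and a size estimate obtained by applying Minkowski's integral inequality \emph{before} any $X$-norm is taken for the far part --- but it is implemented by a genuinely different device. The paper simply reruns Section 3 in vector-valued form: it splits each $f_j$ over $B(x,2r)$ and its complement, asserts the continuous estimate $\bigl\|\chi_{B(0,r)}(\sum_j(Mf_j)^v)^{1/v}\bigr\|_X\lesssim r^{\sigma}\int_{2r}^{\infty}t^{-\sigma-1}\bigl\|\chi_{B(0,t)}(\sum_j|f_j|^v)^{1/v}\bigr\|_X\,dt$ (the exact analogue of Lemma 3.3), and closes with the weighted Hardy-operator Lemmas 3.4--3.6. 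You instead discretize through Proposition 2.4/Remark 2.8 and finish with Young's inequality for a geometrically decaying kernel; this is more explicit than the paper's sketch and isolates the one step where circularity threatens, namely that Minkowski must absorb the $\ell^v$-sum into $F$ at the integrand level. Two prices are paid. First, the bound $\|\sum_k\mathcal{M}[F_k]\chi_{A_j}\|_X\le\sum_k\|\mathcal{M}[F_k]\chi_{A_j}\|_X$ uses countable subadditivity of $\|\cdot\|_X$, which a ball quasi-Banach space need not have; the paper's two-term splitting needs only the quasi-triangle inequality, though your geometric kernel decay would survive an Aoki--Rolewicz renorming, so this is repairable and you flag it. Second, summability of the $l\ge2$ branch of your kernel needs $n-\sigma+\lambda>0$, a condition absent from the statement; it holds for all of the paper's examples ($\sigma=n/p\le n$, $p>1$), and it can be removed altogether by measuring $\mathcal{M}F$ on the balls $B(0,2^j)$ of Proposition 2.4 and splitting $F$ only into $F\chi_{B(0,2^{j+1})}$ plus the outer annuli --- at which point your argument collapses back onto the paper's Lemma 3.3.
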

\begin{proof}
	In Section 3, we prove that the Hardy-Littlewood maximal operator  the operator $ M $ is bounded from $ L M_{X, q}^\lambda $ to $ L M_{X, q}^\lambda $.
	Refer to Section 3 to obtain \eqref{3}, 
we only suppose  $\theta<\infty $; the case $ \theta=\infty$  can be dealt similarly.
	$$
	\left\|\chi_{B(0, r)}\left(\sum_{j=1}^{\infty} |M f_{j}|^{v}\right)^{\frac{1}{v}}\right\|_{X} \lesssim r^{\sigma} \int_{2 r}^{\infty} t^{-\sigma-1}\left\|\chi_{B(0, t)}\left(\sum_{j=1}^{\infty}\left|f_{j}\right|^{v}\right)^{\frac{1}{v}}\right\|_{X} d t .
	$$
	Referring to the method of Section 3,  with the help of the boundedness of  $H^{*} $, we obtain the desired result.
\end{proof}	

\section{Predual spaces of the local Morrey space associated with ball quasi-Banach function spaces}
Let$ 1<q \leq \infty $.  If $supp(A)\subset B(R)$ and $\| A \|_{X} \leq R^{-\lambda-\frac{1}{q}}$, which we call the function A is a $(X , R)$-block. 
The local block space $LH_{X',q'}^{\lambda}  (\mathbb{R}^n)$ is the set of all measurable functions g. There exists a decomposition $$g(x)= \sum_{j=-\infty}^{\infty} \lambda_j A_j (x).$$
Where each of A is a $(X , 2^j)$-block   and $\{ \lambda _{j}  \}^{\infty}_ {j=-\infty} \in  l^{q '}$ and the convergence of almost all $x\in (\mathbb{R}^n)$, the norm of g is given by:
$$
\|g\| _{LH_{X',q'}^{\lambda} } := \text{inf} \left(\sum_{j=-\infty}^{\infty} |\lambda_j|^{q'}\right) ^{\frac{1}{q '}},
$$
where $\{ \lambda _{j}  \}^{\infty}_ {j=-\infty}$ covers all the above admissible expressions.
\begin{theorem}
	Let  $1<q \leq \infty $ and $0\leq \lambda < \sigma$,  then $L M_{X, q}^{\lambda} (\mathbb{R}^n)$ is the dual of $ LH_{X',q'}^{\lambda}   (\mathbb{R}^n)$ in the following sense:
	\begin{itemize}
		\item[(\romannumeral1)]
		Let $ f \in L M_{X, q}^{\lambda}  (\mathbb{R}^n)$, for any  $g \in LH_{X',q'}^{\lambda}   (\mathbb{R}^n)$, then $f g \in   L^{1}\left(\mathbb{R}^{n}\right) $, the mapping
		$$
		g \in LH_{X',q'}^{\lambda}  (\mathbb{R}^n)  \mapsto \int_{\mathbb{R}^{n}} f(x) g(x) d x \in \mathbb{C}
		$$
		may	define a continuous linear functional $ L_{f} $ on $ LH_{X',q'}^{\lambda}   $.
		\item[(\romannumeral2)]
		Conversely, any continuous linear functional  L  on $ LH_{X',q'}^{\lambda}  (\mathbb{R}^n) $ can be realized as $ L=L_{f}\left(\mathbb{R}^{n}\right) $ with a certain  $f \in L M_{X, q}^{\lambda} (\mathbb{R}^n)$. 
	\end{itemize}
	Futhermore, for all $ f \in  L M_{X, q}^{\lambda}\left(\mathbb{R}^{n}\right) $ the operator norm of $L_f$ is equivalent to $\|f\|_{ L M_{X, q}^{\lambda}}$, scilicet there exists a constant $ C>0$  such that
	$$
	C^{-1}\|f\|_{ L M_{X, q}^{\lambda}} \leq\left\|L_{f}\right\|_{LH_{X',q'}^{\lambda}   \rightarrow \mathbb{C}} \leq C\|f\|_{ L M_{X, q}^{\lambda}.}
	$$
	
\end{theorem}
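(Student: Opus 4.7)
The plan is to prove the two inclusions separately, using the block decomposition for $g$ to get direction (i), and a restriction-extension argument based on the duality between $X$ and $X'$ for direction (ii). Throughout, I will rely on the discrete equivalence from Proposition 2.3, which rewrites the $LM_{X,q}^\lambda$ norm as an $\ell^q$ sum indexed by dyadic scales, so that Hölder's inequality for sequences can pair naturally with the $\ell^{q'}$ condition built into $LH_{X',q'}^\lambda$.

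For direction (i), I would start from an admissible decomposition $g=\sum_{j\in\mathbb{Z}} \lambda_j A_j$ with $A_j$ a block supported in $B(0,2^j)$. The generalized Hölder inequality for the ball (quasi-)Banach function space $X$ and its associate space $X'$ yields
$$\int_{\mathbb{R}^n} |f(x)A_j(x)|\,dx \leq \|f\chi_{B(0,2^j)}\|_X \, \|A_j\|_{X'}.$$
Combining the block size bound with Hölder's inequality in $\ell^q\times\ell^{q'}$ and Proposition 2.3, I obtain
$$\int_{\mathbb{R}^n}|f(x)g(x)|\,dx \leq \sum_{j\in\mathbb{Z}}|\lambda_j|\,2^{-j(\lambda+1/q)}\|f\chi_{B(0,2^j)}\|_X \lesssim \|g\|_{LH_{X',q'}^\lambda}\,\|f\|_{LM_{X,q}^\lambda}.$$
Taking the infimum over decompositions of $g$ gives the continuity of $L_f$ and the upper estimate $\|L_f\|\leq C\|f\|_{LM_{X,q}^\lambda}$.

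For direction (ii), given a continuous linear functional $L$ on $LH_{X',q'}^\lambda$, I would represent it by a measurable function $f$ constructed scale-by-scale. For each $R>0$, any $h\in X'$ with $\mathrm{supp}(h)\subset B(0,R)$ and $\|h\|_{X'}\leq R^{-\lambda-1/q}$ is a single block, hence belongs to $LH_{X',q'}^\lambda$ with norm at most one. This restricts $L$ to a bounded linear functional on the subspace of $X'$ consisting of functions supported in $B(0,R)$. By the associate-space representation $X=(X')'$ valid for ball Banach function spaces, there exists $f_R\in X$ supported in $B(0,R)$ with $\|f_R\|_X\lesssim R^{\lambda+1/q}\|L\|$ realizing $L$ on that subspace. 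Uniqueness on overlapping supports gives the consistency $f_{R'}=f_R\,\chi_{B(0,R')}$ for $R'<R$, so I may define $f$ as the almost-everywhere limit. For the lower estimate $\|f\|_{LM_{X,q}^\lambda}\leq C\|L_f\|$, I would fix $\varepsilon>0$ and build an extremal $g=\sum_j\lambda_j A_j$ by choosing, on each dyadic annulus, a near-maximizer $h_j\in X'$ of the pairing against $f\chi_{B(0,2^j)\setminus B(0,2^{j-1})}$, then distributing an $\ell^{q'}$ sequence $\{\lambda_j\}$ dual to the extremizing $\ell^q$-sequence of the discrete norm of $f$.

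The main obstacle will be the construction in (ii): the associate-space representation $(X')'=X$ and the sharp scale-by-scale selection of the $h_j$ must be executed carefully, as must the verification that the resulting formal sum $g=\sum_j\lambda_j A_j$ converges almost everywhere and in $LH_{X',q'}^\lambda$ so that $L(g)$ can be computed by interchanging sum and integral. A secondary technical point is the consistency of the functions $f_R$ across scales, which is immediate from the uniqueness part of the associate representation but should be stated explicitly before defining $f$ as their pointwise limit.
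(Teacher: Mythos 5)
Your proposal is correct and follows essentially the same route as the paper: part (i) is the identical chain of the generalized H\"older inequality for $X$ and $X'$, the block size bound, $\ell^{q}$--$\ell^{q'}$ duality, and the discrete dyadic characterization of the $LM_{X,q}^{\lambda}$ norm; part (ii) is the paper's argument as well, namely representing $L$ on functions supported in $B(0,2^{j})$ by an element of $X=(X')'$ and then testing against near-extremizing blocks weighted by a dual $\ell^{q'}$ sequence (the paper works with finitely supported $\{\rho_j\}$ to sidestep the convergence issues you flag). Your write-up is somewhat more explicit than the paper about the consistency of the scale-by-scale representatives and the convergence of the extremal $g$, but the underlying ideas coincide.
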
 
\begin{proof}
	(1)
	Let  g  be such that
	$$g:=\sum_{j=-\infty}^{\infty} \lambda_{j} A_{j},$$
	where each $ A_{j}$  is a  $\left(X^{\prime}, 2^{j}\right) $-block and $ \left\{\lambda_{j}\right\}_{j=1}^{\infty} \in l^{q^{\prime}}$  satisfies
	$$
	\left(\sum_{j=-\infty}^{\infty}\left|\lambda_{j}\right|^{q^{\prime}}\right)^{\frac{1}{q^{\prime}}} \leq 2\|g\|_{LH_{X',q'}^{\lambda} } .$$\\
	Then
	$$
	\begin{aligned}
	\|f   g\|_{L_{1}} & \leq \sum_{j=-\infty}^{\infty}\left|\lambda_{j}\right| \int_{B\left(0,2^{j}\right)}\left|f(x) A_{j}(x)\right| d x \\
	&\leq \sum_{j=-\infty}^{\infty}\left|\lambda_{j}\right|
	\|f \chi_{ B\left(0,2^{j}\right)}\|_{X} \|A_j (x)\chi_{ B\left(0,2^{j}\right)}\|_{X^\prime}\\
	&\leq \sum_{j=-\infty}^{\infty}\left|\lambda_{j}\right| 2^{-j\lambda -\frac{j}{q}}
	\|f\chi_{ B\left(0,2^{j}\right)}\|_{X}\\
	&\leq 
	\left(\sum_{j=-\infty}^{\infty}\left|\lambda_{j}\right| ^{q'}\right) ^{\frac{1}{q ^\prime}}
	\left(\sum_{j=-\infty}^{\infty}\left(2^{-j\lambda -\frac{j}{q}} 
	\|f \chi_{ B\left(0,2^{j}\right)}\|_{X} \right)^{q}\right) ^{\frac{1}{q}}\\	& \lesssim \|f\|_{L M_{X, q}^{\lambda}} \|g\|_{LH_{X',q'}^{\lambda}}.
	\end{aligned}
	$$
	(2) 
	Let L be a bounded linear functional on $ LH_{\vec{p}^\prime \theta^\prime  ,w}(\R^n)$, since the mapping
	$$
	g \in X^{\prime}\left(\mathbb{R}^{n}\right) \mapsto L\left(g \chi_{B\left(0,2^{j}\right)}\right) \in \mathbb{C}
	$$
	is a bounded linear functional, we see that L is realized by an $L^{loc}_{1}
	(\R^n)$-function
	$f$ satisfy
	\begin{equation}\label{10}
	L(g \chi_{B(0,2^j)}) = \int _{B(0,2^j)} g(x)f(x)dx
	\end{equation}
	for all $ g \in X^{\prime}\left(\mathbb{R}^{n}\right) $ and $ j \in \mathbb{Z} $. We have to check  $f \in L M_{X, q}^{\lambda}\left(\mathbb{R}^{n}\right) $, or equivalently (Proposition 2.4),
	$$
	\left(\sum_{j=-\infty}^{\infty}\left(2^{-j\lambda -\frac{j}{q}} 
	\|f \chi_{ B\left(0,2^{j}\right)}\|_{X} \right)^{q}\right) ^{\frac{1}{q}}
	$$
	To this end, choose a nonnegative $ \ell^{q^{\prime}}$ -sequence $ \left\{\rho_{j}\right\}_{j=-\infty}^{\infty} $ arbitrarily so that $ \rho_{j}=0 $ with  $|j| \gg 1$  and estimate
	$$
	\sum_{j=-\infty}^{\infty} 2^{-j\lambda -\frac{j}{q}} \rho_{j}
	\|f \chi_{ B\left(0,2^{j}\right)}\|_{X}
	$$
	Let us set
	$$
	g_{j}(x) :=\left\{\begin{array}{ll}
	P(x)~~~ ( \text{satisfy} ~\|P\|_{X'}=1) , & \text { if }	\|f \chi_{ B\left(0,2^{j}\right)}\|_{X}>0 \\
	0, & \text { otherwise. }
	\end{array}\right.
	$$
	Then each $ g_{j}$  is a $ \left(X^{\prime}, R\right) $-block
	$$
	g := \sum_{j=-\infty}^{\infty} \rho_{j} g_{j}\in LH_{X',q'}^{\lambda}  \left(\mathbb{R}^{n}\right)
	$$
	   and satisfies
	$$
	\int_{\mathbb{R}^{n}}|f(x) g(x)| d x=\sum_{j=-\infty}^{\infty} w\left(2^{j}\right) \rho_{j}	\|f \chi_{ B\left(0,2^{j}\right)}\|_{X}
	$$
	Therefore, by letting $ h(x) := \overline{\operatorname{sgn}(f(x))} g(x) $ for $ x \in \mathbb{R}^{n} $, since  $\operatorname{supp}(h) \subset B\left(0,2^{j}\right) $ for some large  j,
	$
	\int_{\mathbb{R}^{n}}|f(x) g(x)| d x=L(h)
	$
	thanks to \eqref{10} and the fact that $ \rho_{j}=0  $ if $ |j| \gg 1$ . Thus
	$$
	\begin{aligned}
	\sum_{j=-\infty}^{\infty} 2^{-j\lambda -\frac{j}{q}} \rho_{j}	\|f \chi_{ B\left(0,2^{j}\right)}\|_{X}&=\int_{\mathbb{R}^{n}}|f(x) g(x)| d x\\
	&=L(h) \leq\|L\|_{LH_{X',q'}^{\lambda}   \rightarrow \mathbb{C}}\| h\|_{LH_{X',q'}^{\lambda}}
	\\&\leq\|L\|_{LH_{X',q'}^{\lambda}   \rightarrow \mathbb{C}}\left(\sum_{j=-\infty}^{\infty}\left|\rho_{j}\right|^{q^{\prime}}\right)^{\frac{1}{q^{\prime}}} .
	\end{aligned}
	$$
\end{proof}

\section{Charcterization of the Hardy local Morrey-type spaces associated with ball quasi-Banach function spaces in terms of the grand maxiaml operators and the heat kernel}
We characterize the space $ L M_{X, q}^{\lambda}\left(\mathbb{R}^{n}\right)$ in terms of the heat kernel.
Let us show that $ L M_{X, q}^{\lambda}\left(\mathbb{R}^{n}\right)$ and $H  L M_{X, q}^{\lambda}\left(\mathbb{R}^{n}\right)$ are isomorphic.
\begin{theorem}
	Let $ 1<q\leq \infty$, $0\leq \lambda < \sigma$, and $X\in \mathbb{M}'$.
	\begin{itemize}
		\item[(\romannumeral1)]
		If $f \in  L M_{X, q}^{\lambda}\left(\mathbb{R}^{n}\right)$, then $f \in H  L M_{X, q}^{\lambda}\left(\mathbb{R}^{n}\right)$.
		\item[(\romannumeral2)]
		If $f \in H  L M_{X, q}^{\lambda}\left(\mathbb{R}^{n}\right)$, then $f$ is represented by a measurable function $g\in L M_{X, q}^{\lambda}\left(\mathbb{R}^{n}\right)$.
	\end{itemize}
	If $f \in  L M_{X, q}^{\lambda}\left(\mathbb{R}^{n}\right)$, then
	\begin{equation}\label{11}
	\|f\|_{ L M_{X, q}^{\lambda}} \leq\|f\|_{H  L M_{X, q}^{\lambda}} \leq C\|f\|_{ L M_{X, q}^{\lambda}}
	\end{equation}
\end{theorem}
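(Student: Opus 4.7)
The plan is to reduce both statements to the pointwise comparison between the heat maximal operator and the Hardy--Littlewood maximal operator $M$, and then invoke Theorem~3.1 (boundedness of $M$ on $LM_{X,q}^{\lambda}$).

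For part (i), assume $f \in LM_{X,q}^{\lambda}$. The heat kernel $p_t(x) := (4\pi t)^{-n/2}\exp(-|x|^2/(4t))$ is a positive, radial, decreasing approximate identity with $\|p_t\|_{L^1}=1$. By the standard comparison lemma---convolution with a radial, decreasing, integrable kernel is pointwise dominated by $M$ up to a constant depending only on the $L^1$ norm of the kernel---one obtains
\[
\sup_{t>0}\bigl|e^{t\Delta}f(x)\bigr| \lesssim Mf(x) \qquad \text{for a.e.\ } x \in \mathbb{R}^n.
\]
Applying Theorem~3.1 yields $\|\sup_{t>0}|e^{t\Delta}f|\|_{LM_{X,q}^{\lambda}} \lesssim \|Mf\|_{LM_{X,q}^{\lambda}} \lesssim \|f\|_{LM_{X,q}^{\lambda}}$, which proves $f \in HLM_{X,q}^{\lambda}$ together with the right-hand inequality in (7.1). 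The left-hand inequality $\|f\|_{LM_{X,q}^{\lambda}} \leq \|f\|_{HLM_{X,q}^{\lambda}}$ comes from the Lebesgue differentiation theorem: since $p_t$ is an approximate identity and $f \in L^1_{\mathrm{loc}}$, one has $e^{t\Delta}f(x)\to f(x)$ a.e.\ as $t\to 0^+$, hence $|f(x)| \leq \sup_{t>0}|e^{t\Delta}f(x)|$ a.e., and the monotonicity built into property (ii) of the ball quasi-Banach function space (passed through to $LM_{X,q}^{\lambda}$) closes the argument.

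For part (ii), suppose $f \in HLM_{X,q}^{\lambda}$. Then $F := \sup_{t>0}|e^{t\Delta}f|$ lies in $LM_{X,q}^{\lambda} \subset L^1_{\mathrm{loc}}(\mathbb{R}^n)$, so the family $\{e^{t\Delta}f\}_{0<t\leq 1}$ is uniformly dominated a.e.\ by the locally integrable majorant $F$. Choosing a sequence $t_k \downarrow 0$ and applying Lebesgue's dominated convergence theorem on each ball $B(0,R)$ together with a diagonal extraction, I produce an a.e.\ pointwise limit
\[
g(x) := \lim_{k\to\infty}e^{t_k\Delta}f(x),
\]
which is measurable and satisfies $|g(x)| \leq F(x)$ a.e. Monotonicity of the quasi-norm then yields $g \in LM_{X,q}^{\lambda}$ with $\|g\|_{LM_{X,q}^{\lambda}} \leq \|f\|_{HLM_{X,q}^{\lambda}}$, which is the left-hand inequality in (7.1). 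Since $e^{t\Delta}f \to f$ in $\mathcal{S}'(\mathbb{R}^n)$ as $t \to 0^+$ (a standard property of the heat semigroup on tempered distributions), the identification $f = g$ in $\mathcal{S}'$ follows, completing the representation.

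The main obstacle is the rigorous construction of the representative $g$ in (ii): $f$ is only a tempered distribution a priori, so one cannot evaluate it pointwise, and the convergence $e^{t\Delta}f \to f$ as $t \to 0^+$ is only distributional. The crucial leverage is the uniform local integrability supplied by $F \in LM_{X,q}^{\lambda} \subset L^1_{\mathrm{loc}}$, which upgrades the distributional convergence to a.e.\ pointwise convergence along a subsequence via dominated convergence. Once this upgrade is achieved, the remaining verifications---measurability of $g$, the pointwise bound $|g|\leq F$, and the identification in $\mathcal{S}'$---are routine applications of the embedding $LM_{X,q}^{\lambda}\hookrightarrow \mathcal{S}'$ from Proposition~2.2.
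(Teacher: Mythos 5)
Part (i) of your argument is essentially the paper's: the pointwise domination $\sup_{t>0}|e^{t\Delta}f|\lesssim Mf$ plus the $LM_{X,q}^{\lambda}$-boundedness of $M$ from Section~3 gives the right-hand inequality, and the Lebesgue differentiation theorem gives the left-hand one. That part is fine.

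Part (ii) has a genuine gap at exactly the point you flag as the ``main obstacle.'' You claim that the uniform majorant $F=\sup_{t>0}|e^{t\Delta}f|\in L^1_{\mathrm{loc}}$ lets you extract, via dominated convergence and a diagonal argument, a subsequence $t_k\downarrow 0$ along which $e^{t_k\Delta}f$ converges \emph{almost everywhere} to some $g$. Dominated convergence does not produce a pointwise limit; it presupposes one. More to the point, distributional convergence together with a fixed $L^1_{\mathrm{loc}}$ majorant does \emph{not} imply a.e.\ convergence along any subsequence: the family $h_k(x)=\sin(kx)\chi_{[0,1]}(x)$ is dominated by $\chi_{[0,1]}$ and converges to $0$ in $\mathcal{S}'(\mathbb{R})$, yet no subsequence converges a.e. So the ``upgrade'' step is false as stated, and with it the construction of $g$ and the bound $|g|\le F$ collapse. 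What is needed is a weak compactness argument rather than a pointwise one. The paper gets this from its duality theorem (Theorem~6.1): $LM_{X,q}^{\lambda}$ is realized as the dual of the block space $LH_{X',q'}^{\lambda}$, so by Banach--Alaoglu one extracts $t_j\downarrow 0$ with $L_{e^{t_j\Delta}f}\to L_g$ weak-*, obtains $\|g\|_{LM_{X,q}^{\lambda}}\le\liminf_j\|e^{t_j\Delta}f\|_{LM_{X,q}^{\lambda}}\le\|f\|_{HLM_{X,q}^{\lambda}}$ by lower semicontinuity, and identifies $f=g$ from the convergence $e^{t_j\Delta}f\to f$ in $\mathcal{S}'$. (A Dunford--Pettis argument on each ball, using that domination by a fixed $L^1$ function gives uniform integrability, would also yield a weak $L^1_{\mathrm{loc}}$ limit and could replace the duality; but some such compactness input is indispensable, and your proposal supplies none.)
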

\begin{proof}
	(1)
	Proposition 2.2 has proved that $ L M_{X, q}^{\lambda}\left(\mathbb{R}^{n}\right) \hookrightarrow \mathcal{S}^{\prime}\left(\mathbb{R}^{n}\right)$. Also\cite{GH2017}
	$
	\sup _{t>0}\left|e^{t \Delta} f\right| \leq M f .
	$
	\par 
	In Section 3, the $ L M_{X, q}^{\lambda}\left(\mathbb{R}^{n}\right)$-boundedness of the Hardy-Littlewood maximal operator, we see that $f \in H  L M_{X, q}^{\lambda}\left(\mathbb{R}^{n}\right)$ and that the right inequality in \eqref{11} follows.\par
	(2) Due to Theorem 6.1, the dual of $L H_{X^{\prime},q^{\prime}}^{\lambda} \left(\mathbb{R}^{n}\right)$ is isomorphic to $ L M_{X, q}^{\lambda}\left(\mathbb{R}^{n}\right)$. Let $L: h \in  L M_{X, q}^{\lambda}\left(\mathbb{R}^{n}\right) \mapsto L_{h} \in\left(L H_{X', q'}^{\lambda}\left(\mathbb{R}^{n}\right)\right)^{*}$ be an isomorphism in Theorem 6.1. By the Banach-Alaoglu theorem, there exists a positive decreasing sequence $\left\{t_{j}\right\}_{j=1}^{\infty} \subset(0,1)$ such that $L_{e^{t} \Delta_{f}}$ is convergent to $G=L_{g} \in$ $\left(L H_{X', q'}^{\lambda}\left(\mathbb{R}^{n}\right)\right)^{*}$ for some $g \in  L M_{X, q}^{\lambda}\left(\mathbb{R}^{n}\right)$ in the weak-* sense. Observe that
	$$
	\begin{aligned}
	\|g\|_{L M_{X, q}^{\lambda}} &\sim\left\|L_{g}\right\|_{\left(L H_{X', q'}^{\lambda}\right)^{*}}\\
	&\leq \liminf _{j \rightarrow \infty}\left\|L_{e^{t} \Delta^{\Delta}}\right\|_{\left(L H_{X', q'}^{\lambda}\right)^{*}}\\
	&\sim \liminf _{j \rightarrow \infty}\left\|e^{t_{j} \Delta} f\right\|_{ L M_{X, q}^{\lambda}} \leq\|f\|_{H  L M_{X, q}^{\lambda}} .
	\end{aligned}
	$$
	Meanwhile, since $f \in \mathcal{S}^{\prime}\left(\mathbb{R}^{n}\right), e^{t_{j} \Delta} f$ is convergent to $f \in \mathcal{S}^{\prime}\left(\mathbb{R}^{n}\right)$. Thus, we conclude $\mathcal{S}^{\prime}\left(\mathbb{R}^{n}\right) \ni f=g \in  L M_{X, q}^{\lambda}\left(\mathbb{R}^{n}\right)$.
	The left inequality in \eqref{11} follows since the spaces $ L M_{X, q}^{\lambda}\left(\mathbb{R}^{n}\right)$ is isomorphic to the dual of $L H_{X', q'}^{\lambda}\left(\mathbb{R}^{n}\right)$. Thus, from Lebesgue's differentiation theorem,
	$$
	\|f\|_{ L M_{X, q}^{\lambda}} \leq\left\|\sup _{t>0}\left|e^{t \Delta} f\right|\right\|_{ L M_{X, q}^{\lambda}}=\|f\|_{H  L M_{X, q}^{\lambda}}.
	$$
\end{proof}
In terms of the grand maximal opetator in Definition 2.3, can rephrase Theorem $7.1$ as follows:

\begin{theorem}
	Let $ 1<q\leq \infty$, $0\leq \lambda < \sigma$, and $X\in \mathbb{M}'$.
	\begin{itemize}
		\item[(\romannumeral1)]
		If $f \in  L M_{X, q}^{\lambda}\left(\mathbb{R}^{n}\right)$, then $\mathcal{M} f \in  L M_{X, q}^{\lambda}\left(\mathbb{R}^{n}\right)$.
		\item[(\romannumeral2)]
		Let $f \in \mathcal{S}^{\prime}\left(\mathbb{R}^{n}\right)$, if $\mathcal{M} f \in  L M_{X, q}^{\lambda}\left(\mathbb{R}^{n}\right)$, then $f$ is represented by a measurable function $g\in  L M_{X, q}^{\lambda}\left(\mathbb{R}^{n}\right)$.
	\end{itemize}
	If $f \in  L M_{X, q}^{\lambda}\left(\mathbb{R}^{n}\right)$, then $C^{-1}\|f\|_{ L M_{X, q}^{\lambda}} \leq\|\mathcal{M} f\|_{ L M_{X, q}^{\lambda}} \leq C\|f\|_{L M_{X, q}^{\lambda}} .$
\end{theorem}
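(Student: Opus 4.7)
The plan is to reduce Theorem 7.2 entirely to Theorem 7.1 and Theorem 3.1 via two classical pointwise comparisons: one between the heat-kernel supremum and the grand maximal function, and one between the grand maximal function and the Hardy--Littlewood maximal function. These two inequalities sandwich $\mathcal{M}f$ and permit us to transfer Theorem 7.1 to the present setting.

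First I would record the key pointwise bounds. For any $f \in L^1_{\mathrm{loc}}(\mathbb{R}^n)$ one has $\mathcal{M}f(x) \lesssim Mf(x)$; this is a standard fact, obtained by dominating an arbitrary $\varphi \in \mathcal{F}_N$ by a constant multiple of $\sum_k 2^{-kN} \chi_{B(0,2^k)}$ and averaging. For any $f \in \mathcal{S}'(\mathbb{R}^n)$, writing the heat kernel as $t^{-n/2}\varphi_0(t^{-1/2}\,\cdot\,)$ with $\varphi_0(y) = (4\pi)^{-n/2}\exp(-|y|^2/4)$, one has $c\,\varphi_0 \in \mathcal{F}_N$ for a fixed $c = c(N) > 0$ and all sufficiently large $N$, so $\sup_{t > 0}|e^{t\Delta}f(x)| \lesssim \mathcal{M}_N f(x) = \mathcal{M}f(x)$ pointwise.

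For part (i), assume $f \in LM_{X,q}^{\lambda}(\mathbb{R}^n)$. Proposition 2.2 gives $f \in \mathcal{S}'(\mathbb{R}^n)$ and the integrability argument therein shows $f \in L^1_{\mathrm{loc}}(\mathbb{R}^n)$, so the first pointwise bound applies. Invoking Theorem 3.1, under the hypothesis $X \in \mathbb{M}'$ (which in particular implies $X \in \mathbb{M}$), the Hardy--Littlewood maximal operator is bounded on $LM_{X,q}^{\lambda}$, hence
\[
\|\mathcal{M}f\|_{LM_{X,q}^{\lambda}} \lesssim \|Mf\|_{LM_{X,q}^{\lambda}} \lesssim \|f\|_{LM_{X,q}^{\lambda}},
\]
which yields the right-hand inequality in the claimed norm equivalence. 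For part (ii), assume $f \in \mathcal{S}'(\mathbb{R}^n)$ with $\mathcal{M}f \in LM_{X,q}^{\lambda}$. The second pointwise bound gives $\sup_{t>0}|e^{t\Delta}f| \lesssim \mathcal{M}f$, so $\sup_{t>0}|e^{t\Delta}f| \in LM_{X,q}^{\lambda}$, i.e. $f \in HLM_{X,q}^{\lambda}$ with $\|f\|_{HLM_{X,q}^{\lambda}} \lesssim \|\mathcal{M}f\|_{LM_{X,q}^{\lambda}}$. Applying Theorem 7.1(ii) produces $g \in LM_{X,q}^{\lambda}(\mathbb{R}^n)$ representing $f$. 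The left estimate in \eqref{11} of Theorem 7.1 then gives
\[
\|f\|_{LM_{X,q}^{\lambda}} \leq \|f\|_{HLM_{X,q}^{\lambda}} \lesssim \|\mathcal{M}f\|_{LM_{X,q}^{\lambda}},
\]
completing the norm equivalence.

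The main technical delicacies are the two pointwise comparisons. The heat-kernel bound requires a careful identification of a rescaled Gaussian as an element of $\mathcal{F}_N$, which is uniform in the dilation parameter $t$ precisely because $\varphi_0$ is a fixed Schwartz function. The estimate $\mathcal{M}f \lesssim Mf$ requires $f \in L^1_{\mathrm{loc}}$, which we obtain from Proposition 2.2; without this embedding one only has $f \in \mathcal{S}'$ and the pointwise inequality may fail. Once these two ingredients are in place the rest is a direct appeal to the previously established boundedness of $M$ on $LM_{X,q}^{\lambda}$ (Theorem 3.1) and the heat-kernel characterisation (Theorem 7.1), so no further estimates at the level of the function space $X$ are needed.
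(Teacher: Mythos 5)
Your proposal is correct and follows essentially the same route as the paper: both arguments rest on the two pointwise comparisons $\mathcal{M}f \lesssim Mf$ (combined with the boundedness of $M$ on $LM_{X,q}^{\lambda}$ from Section~3) and $\sup_{t>0}|e^{t\Delta}f| \lesssim \mathcal{M}f$ (combined with Theorem~7.1). Your write-up merely supplies more detail on why those pointwise bounds hold, which the paper takes as standard.
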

\begin{proof}
	The implication ($i$) $\Longrightarrow(ii)$ immediately follows from the pointwise inequality $\mathcal{M} f(x) \lesssim  M f(x)$. The converse implication ($ii$) $\Longrightarrow$ ($i$) follows from the pointwise estimatee $\left|e^{t \Delta} f(x)\right| \lesssim \mathcal{M} f(x)$. Indeed, from this pointwise estimate, we conclude $
	\mathop{sup}\limits_{t>0}\left|e^{t \Delta} f\right| \in  L M_{X, q}^{\lambda}\left(\mathbb{R}^{n}\right)$. Thus, we are in the position of applying Theorem $7.1$ to receive $f \in  L M_{X, q}^{\lambda}\left(\mathbb{R}^{n}\right)$.
\end{proof}
\begin{remark}\cite{SY2017}
	$H  L M_{X, q}^{\lambda}$ returne to the norm of Hardy-type spaces for ball quasi-Banach function spaces $H_{L M_{X, q}^{\lambda}}$.
	$$
	\|f\|_{H_{L M_{X, q}^{\lambda}}}\left(\mathbb{R}^n\right) \sim\left\|\sup _{t \in(0, \infty)} \mid e^{t \Delta} f| \right\| _{L M_{X, q}^{\lambda}}
	$$
\end{remark}
\section{Atomic decomposition of the local Morrey space associated with ball quasi-Banach function spaces}
\begin{theorem}
	Let $ 1<q\leq \infty$, $0\leq \lambda < \sigma$, $X\in \mathbb{M}'$ and
	\begin{equation}\label{12}
\sigma_1<\sigma -\lambda
	\end{equation}
	And $ \left\{Q_{j}\right\}_{j=1}^{\infty} \subset \mathcal{Q}\left(\mathbb{R}^{n}\right),\left\{a_{j}\right\}_{j=1}^{\infty} \subset X_1 \left(\mathbb{R}^{n}\right) ,  \left\{\lambda_{j}\right\}_{j=1}^{\infty} \subset[0, \infty)$  satisfying
	$$
	\left\|a_{j}\right\|_{X_1} \leq\left\|\chi_{Q_j}\right\|_{X_1}=r^{\sigma_1}, \quad \operatorname{supp}\left(a_{j}\right) \subset Q_{j}, \quad \sum_{j=1}^{\infty} \| \lambda_{j} \chi_{Q_{j}} \|_{ L M_{X, q}^{\lambda}}<\infty.
	$$
	Then the series  $f := \sum_{j=1}^{\infty} \lambda_{j} a_{j}$  converges in $ L_{\mathrm{loc}}^{1}\left(\mathbb{R}^{n}\right) $ and in the Schwartz space $ \mathcal{S}^{\prime}\left(\mathbb{R}^{n}\right) $ of tempered distributions and satisfies the estimate
	\begin{equation}\label{13}
	\|f\|_{ L M_{X, q}^{\lambda}} \lesssim \left\|\sum_{j=1}^{\infty} \lambda_{j} \chi Q_{j}\right\|_{ L M_{X, q}^{\lambda}.}
	\end{equation}
	
\end{theorem}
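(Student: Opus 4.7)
The proof proceeds in two stages: first, convergence of the series in $L^1_{\mathrm{loc}}(\R^n)$ and $\mathcal{S}'(\R^n)$, and second, the norm estimate \eqref{13}.

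\textbf{Stage 1 (Convergence).} The plan is to control each atom in $L^1$ by using the generalized Hölder inequality on the ball quasi-Banach function space $X_1$ and its associate $X_1'$. Combined with the atom condition $\|a_j\|_{X_1}\le\|\chi_{Q_j}\|_{X_1}$ and $\operatorname{supp}(a_j)\subset Q_j$, this yields
$$
\|a_j\|_{L^1(\R^n)}\le\|a_j\|_{X_1}\,\|\chi_{Q_j}\|_{X_1'}\lesssim\|\chi_{Q_j}\|_{X_1}\,\|\chi_{Q_j}\|_{X_1'}\sim|Q_j|.
$$
Comparing $|Q_j|$ with $\|\chi_{Q_j}\|_{LM_{X,q}^\lambda}$ for those cubes that intersect a given compact set $K$, and invoking the finiteness hypothesis $\sum_j\|\lambda_j\chi_{Q_j}\|_{LM_{X,q}^\lambda}<\infty$, I obtain absolute convergence of $\sum_j\lambda_j a_j$ in $L^1_{\mathrm{loc}}(\R^n)$. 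Convergence in $\mathcal{S}'(\R^n)$ then follows from Proposition 2.2.

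\textbf{Stage 2 (Norm estimate).} Here I would pass to the dyadic description in Remark 2.6,
$$
\|f\|_{LM_{X,q}^\lambda}\sim\left(\sum_{k\in\mathbb{Z}}\bigl(2^{-\lambda k}\|f\chi_{A_k}\|_X\bigr)^q\right)^{1/q},
$$
so that the task reduces to estimating $\|f\chi_{A_k}\|_X$ annulus by annulus. For each $k$, I split the index set $\{j\}$ according to whether the edge length $\ell(Q_j)$ of the supporting cube satisfies $\ell(Q_j)\le 2^k$ (the ``small'' atoms) or $\ell(Q_j)>2^k$ (the ``large'' atoms). For small atoms, I bound $\|a_j\chi_{A_k}\|_X$ directly by $\|\chi_{Q_j}\|_X$, using the associate-space duality together with the $X_1$-normalization of the atom. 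For large atoms, the geometric relation between $A_k$ and the much bigger $Q_j$, coupled with $\|a_j\|_{X_1}\le\|\chi_{Q_j}\|_{X_1}=\ell(Q_j)^{\sigma_1}$, extracts an extra decay factor of the form $\bigl(2^k/\ell(Q_j)\bigr)^{\sigma-\sigma_1-\lambda}$.

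\textbf{Conclusion and main obstacle.} Inserting these two bounds into the dyadic sum and applying Young's convolution inequality for $\ell^q$ sequences (to handle the discrete convolution in the scale variable $k$), the right-hand side is dominated by the dyadic expansion of $\bigl\|\sum_j\lambda_j\chi_{Q_j}\bigr\|_{LM_{X,q}^\lambda}$, giving \eqref{13}. The principal obstacle is the large-atom regime: one must leverage both the support constraint $\operatorname{supp}(a_j)\subset Q_j$ and the $X_1$-normalization to produce a decay rate strictly beating the Morrey weight $2^{-\lambda k}$ in the opposite direction, and the summability of the resulting geometric series is controlled by the exponent $\sigma-\sigma_1-\lambda$. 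This is precisely why the hypothesis \eqref{12}, $\sigma_1<\sigma-\lambda$, is the sharp condition driving the theorem, and verifying this decay rigorously in the abstract ball quasi-Banach setting is the heart of the argument.
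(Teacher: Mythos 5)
Your strategy --- the dyadic characterization of Remark 2.6 plus a small/large split of the atoms relative to each annulus --- is genuinely different from the paper's. The paper never estimates $\|f\chi_{A_k}\|_X$ directly: it invokes the duality of Theorem 6.1, $\|f\|_{LM_{X,q}^{\lambda}}=\sup\{|\int_{\mathbb{R}^n}fg\,dx| : \|g\|_{LH_{X',q'}^{\lambda}}=1\}$, expands $g=\sum_k\rho_k A_k$ into $(X',2^k)$-blocks, and controls the double sum $\sum_{j,k}\lambda_j|\rho_k|\int_{B(0,2^k)\cap Q_j}a_j|A_k|\,dx$ by the $L^1$-pairing $\|a_j\|_{X_1}\|A_k\|_{X_1'}$ and then by the maximal function of the block; the hypothesis $\sigma_1<\sigma-\lambda$ is consumed inside Lemma 8.1 to make the block-side series converge in $LH_{X',q'}^{\lambda}$.

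The gap in your plan is exactly the step you yourself defer to ``the heart of the argument.'' In Theorem 8.1 the only quantitative information about an atom is $\|a_j\|_{X_1}\le\|\chi_{Q_j}\|_{X_1}$ together with its support; there is no pointwise bound $|a_j|\le\chi_{Q_j}$ (that is the hypothesis of Theorem 8.2, not of 8.1). From an $X_1$-norm bound alone you cannot conclude $\|a_j\chi_{A_k}\|_X\lesssim\|\chi_{Q_j}\|_X$: associate-space duality produces $L^1$-pairings $\int a_j h\,dx\le\|a_j\|_{X_1}\|h\|_{X_1'}$, not bounds on $\|a_j\|_X$, and a H\"older-type inequality $\|a\chi_Q\|_X\lesssim\|a\|_{X_1}\|\chi_Q\|_Y$ needs a concrete relation between $X$ and $X_1$ (e.g.\ a power relation) that you do not supply. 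The same problem affects the large-atom regime, where the decay factor $(2^k/\ell(Q_j))^{\sigma-\sigma_1-\lambda}$ is asserted rather than derived; this is precisely the obstruction the paper's duality argument is designed to avoid. Two smaller issues: your Stage 1 uses $\|\chi_{Q_j}\|_{X_1}\|\chi_{Q_j}\|_{X_1'}\sim|Q_j|$, but only the lower bound $\ge|Q_j|$ is automatic for ball Banach function spaces, so the upper bound requires an extra hypothesis (such as boundedness of $M$ on $X_1$ and $X_1'$) that should be stated; and the comparison of $|Q_j|$ with $\|\chi_{Q_j}\|_{LM_{X,q}^{\lambda}}$ needs care because the Morrey norm here involves only balls centered at the origin. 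As it stands the proposal is a plausible program, not a proof.
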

\begin{lemma}
	Let $ 1<q\leq \infty$, $0\leq \lambda < \sigma$, each $A_{j}$ be a $\left(X ', 2^{j}\right)$-block and $\left\{\rho_{j}\right\}_{j=-\infty}^{\infty} \in \ell^{q^{\prime}}$. Suppose $\sigma$ and $\sigma_1$ satisfies \eqref{12}. Then
	$$
	h := \sum_{j=-\infty}^{\infty} \rho_{j}M\left[ \| A_j \chi_{ B(0, r)}\| _{X'_1}\right]\in LH_{X',q'}^{\lambda}  \left(\mathbb{R}^{n}\right)
	,~~~~
	\|h\|_{LH_{X',q'}^{\lambda}  } \leq C\left(\sum_{j=-\infty}^{\infty}\left|\rho_{j}\right|^{q^{\prime}}\right)^{1 / q^{\prime}}.
	$$
\end{lemma}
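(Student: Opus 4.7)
The plan is to exhibit $h$ as a sum of $(X', 2^k)$-blocks whose coefficients have controlled $\ell^{q'}$ norm. Fix a single block $A_j$, so $\operatorname{supp}(A_j) \subset B(0, 2^j)$ and $\|A_j\|_{X'} \leq 2^{-j(\lambda+1/q)}$, and study the function $F_j(x) := M\bigl[\|A_j \chi_{B(0,r)}\|_{X'_1}\bigr](x)$, where $\|A_j \chi_{B(0,r)}\|_{X'_1}$ is viewed as the radial function $x \mapsto \|A_j \chi_{B(0,|x|)}\|_{X'_1}$, which is nondecreasing in $|x|$ and saturates at $\|A_j\|_{X'_1}$ once $|x| \geq 2^j$. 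Using the size of the block together with an $X'$--$X'_1$ Hölder-type embedding and the hypothesis $\sigma_1 < \sigma - \lambda$ from \eqref{12}, I would first extract a pointwise bound of the form $F_j(x) \lesssim 2^{j(\sigma_1-\lambda-1/q)} \psi_j(x)$, where $\psi_j$ is a radial envelope that is $O(1)$ on $B(0, 2^{j+1})$ and decays outside.

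Next, I would dyadically localize: write $F_j = \sum_{k \geq j} F_{j,k}$ with $F_{j,k}$ supported in $B(0, 2^{k+1}) \setminus B(0, 2^k)$, together with a central piece for $k=j$. For the central piece I would invoke the $X'$-boundedness of $M$ (the dual of the assumption $X \in \mathbb{M}'$) to control $\|F_{j,j}\|_{X'}$, while for far annuli the saturation of $r \mapsto \|A_j \chi_{B(0,r)}\|_{X'_1}$ and standard envelope estimates for $M$ applied to a bounded radial function produce an extra geometric factor $2^{(j-k)\eta}$ with $\eta := (\sigma - \lambda) - \sigma_1 > 0$. After normalising each $F_{j,k}$ by $2^{k(\lambda+1/q)}$, hypothesis \eqref{12} ensures that the remainder is, up to a universal constant, an $(X', 2^k)$-block multiplied by the weight $2^{-(k-j)\eta}$.

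This produces the block decomposition
\[
h \;=\; \sum_{j} \rho_j \sum_{k \geq j} 2^{-(k-j)\eta}\, B_{j,k} \;=\; \sum_k \mu_k\, B_k, \qquad \mu_k := \sum_{j \leq k} \rho_j\, 2^{-(k-j)\eta},
\]
with each $B_{j,k}$ (and hence each aggregated $B_k$) an $(X', 2^k)$-block. Recognising $\mu_k$ as a discrete convolution of $\{\rho_j\}$ with a summable one-sided geometric sequence and invoking Young's inequality $\ell^{q'} \ast \ell^1 \hookrightarrow \ell^{q'}$ gives $\bigl(\sum_k |\mu_k|^{q'}\bigr)^{1/q'} \leq (1-2^{-\eta})^{-1} \bigl(\sum_j |\rho_j|^{q'}\bigr)^{1/q'}$, which is the required control on $\|h\|_{LH_{X',q'}^{\lambda}}$.

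The main obstacle will be the far-annulus estimate in Step 2: the maximal operator is applied not to $A_j$ itself but to a norm-valued radial function, so the standard off-diagonal decay $M[A_j](x) \lesssim 2^{-kn}\|A_j\|_{L^1}$ for $|x| \sim 2^k \gg 2^j$ is not directly available. My strategy is to observe that on $|x| \geq 2^j$ the underlying radial function is constant equal to $\|A_j\|_{X'_1}$, so $F_j$ there is globally bounded by this scalar, and then convert this uniform bound into the geometric decay $2^{-(k-j)\eta}$ via the positive scaling gap $\sigma - \lambda - \sigma_1 > 0$ supplied by \eqref{12}; matching this decay against the index-$j$ sum is precisely what makes the final $\ell^{q'}$ coefficient estimate close.
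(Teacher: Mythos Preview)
Your overall architecture---split $M[\cdot]$ into a central piece on $B(0,2^{j+1})$ and dyadic annuli outside, control the central piece by maximal boundedness, extract geometric decay on the annuli, then aggregate with Young's inequality on $\ell^{q'}$---is exactly the scheme the paper uses. The discrepancy is in how you read the object $M\bigl[\|A_j\chi_{B(0,r)}\|_{X'_1}\bigr]$, and this reading makes your far-annulus step fail.

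You interpret it as the Hardy--Littlewood maximal function of the radial function $x\mapsto\|A_j\chi_{B(0,|x|)}\|_{X'_1}$. But that radial function is nondecreasing and bounded above by $\|A_j\|_{X'_1}$, with equality for $|x|\ge 2^j$; its maximal function is therefore the \emph{constant} $\|A_j\|_{X'_1}$ on all of $\mathbb{R}^n$. A nonzero constant cannot be written as a sum of $(X',2^k)$-blocks with $\ell^{q'}$ coefficients: on the annulus at scale $2^k$ its $X'$-norm grows like $\|\chi_{B(0,2^k)}\|_{X'}$, so the coefficient you must attach to make a block at that scale \emph{grows} with $k$ rather than decaying like $2^{-(k-j)\eta}$. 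Your ``scaling gap'' workaround cannot manufacture spatial decay that is not there; the sum over $k$ diverges.

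What the paper (following the precedent in \cite{GH2017,BS2014}) actually means by this symbol is a maximal operator applied to $A_j$ itself---in the classical case the powered maximal function $(M|A_j|^{p_1'})^{1/p_1'}$, here its ball-space analogue. Because $\operatorname{supp}(A_j)\subset B(0,2^j)$, this function genuinely decays off the support: on the annulus $B(0,2^{j+k+2})\setminus B(0,2^{j+k+1})$ one has the standard estimate $M[\cdot](x)\lesssim 2^{-(j+k)n/?}\|A_j\|$ (the paper records the exponent as $\sigma_1'$), and it is \emph{this} decay, combined with $\|A_j\|_{X'_1}\le 2^{j(\sigma-\sigma_1)}\|A_j\|_{X'}\le 2^{j(\sigma-\sigma_1-\lambda-1/q)}$ and the gap $\sigma_1<\sigma-\lambda$, that makes the annular pieces legitimate blocks with geometrically decaying coefficients. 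Once you adopt the correct interpretation, your outline goes through and coincides with the paper's argument.
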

\begin{proof}
	By the $X'_1$-boundedness of the Hardy-Littlewood maximal operator and $q^{\prime}<\infty$, 
	$$
\sum_{j=-\infty}^{\infty} \rho_{j}  \chi_{ B(0,  2^{j+1})} M\left[ \| A_j \chi_{ B(0, r)}\| _{X'_1}\right]\in LH_{X',q'}^{\lambda}  \left(\mathbb{R}^{n}\right)
	$$
	and
	$$
	\left\|\sum_{j=-\infty}^{\infty} \rho_{j}  \chi_{ B(0,  2^{j+1})} M\left[ \| A_j \chi_{ B(0, r)}\| _{X'_1}\right] \right\|_{LH_{X',q'}^{\lambda} } \lesssim \left(\sum_{j=-\infty}^{\infty}\left|\rho_{j}\right|^{q^{\prime}}\right)^{1 / q^{\prime}}.
	$$
	Meanwhile, combining $\| A_j \| _{X'_1} \leq (2^j)^{\sigma-\sigma_1} \| A_j\| _{X'} \leq (2^j)^{\sigma-\sigma_1-\lambda-\frac{1}{q}} $ and \eqref{12}, therefore,
	$$
	\begin{aligned}
	&\sum_{j=-\infty}^{\infty} \rho_{j} \chi_{B\left(2^{j+1}\right)^{c}}M\left[ \| A_j \chi_{ B(0, r)}\| _{X'_1}\right]\\
	&=\sum_{k=0}^{\infty} \sum_{j=-\infty}^{\infty} \rho_{j} \chi_{B\left(2^{j+k+2}\right) \backslash B\left(2^{j+k+1}\right)}M\left[ \| A_j \chi_{ B(0, r)}\| _{X'_1}\right] \\
	& \leq C \sum_{k=0}^{\infty} \sum_{j=-\infty}^{\infty} \rho_{j} \frac{1}{2^{(j+k)\sigma'_1}}\left\| A_j  \chi_{ B(0,  2^{j})}\right\|_{X'_1}
	\chi_{B\left(2^{j+k+2}\right) \backslash B\left(2^{j+k+1}\right)} \\
		\end{aligned}
	$$
	$$
	\begin{aligned}
	\quad \quad 
	&\leq C \sum_{k=0}^{\infty} \sum_{j=-\infty}^{\infty} \rho_{j} \frac{2^{j \sigma-j \sigma_1}}{2^{(j+k)\sigma'_1}}\left\| A_j  \chi_{ B(0,  2^{j})}\right\|_{X'} \chi_{B\left(2^{j+k+2}\right) \backslash B\left(2^{j+k+1}\right)^{.}} \\
	&\leq C \sum_{k=0}^{\infty} \sum_{j=-\infty}^{\infty} \rho_{j} \frac{2^{j \sigma-j \sigma_1} (2^j)^{-\lambda-\frac{1}{q}} }{2^{(j+k)\sigma'_1}} \chi_{B\left(2^{j+k+2}\right) \backslash B\left(2^{j+k+1}\right)} 
	\lesssim \sum_{j=-\infty}^{\infty} \rho_{j}.
	\end{aligned}
	$$
	
\end{proof}
Next, to prove Theorem 8.1.
\begin{proof}
	To prove \eqref{13}, we resort to the duality obtained in Theorem 6.1
	$$
	\|f\|_{ L M_{X, q}^{\lambda}} = \sup \left\{\left|\int_{\mathbb{R}^{n}} f(x) g(x) d x\right|:\|g\|_{LH_{X',q'}^{\lambda}  }=1\right\} .
	$$
	We can assume that $\left\{\lambda_{j}\right\}_{j=1}^{\infty}$ is finitely supported thanks to the monotone convergence theorem. Let us assume in addition that the $a_{j}$ are non-negative without loss of generality. 
	$$
	g := \sum_{k=-\infty}^{\infty} \rho_{k} A_{k}, \quad G := \sum_{k=-\infty}^{\infty}\left|\rho_{k}\right| M\left[ \| A_j \chi_{ B(0, r)}\| _{X'_1}\right],
	$$
	where each $A_{k}$ is a $\left(\vec{p}^{\prime}, 2^{j}\right)$-block, Lemma 8.1 and
	$$
	\sum_{k=-\infty}^{\infty}\left|\rho_{k}\right|^{q^{\prime}} \leq 1.
	$$
	Then 
	$$
	\begin{aligned}
	\left|\int_{\mathbb{R}^{n}} f(x) g(x) d x\right| & \leq \sum_{(j, k) \in \mathbb{N} \times \mathbb{Z}} \lambda_{j}\left|\rho_{k}\right| \int_{B\left(2^{k}\right) \cap Q_{j}} a_{j}(x)\left|A_{k}(x)\right| d x \\
	& \leq \sum_{(j, k) \in \mathbb{N} \times \mathbb{Z}} \lambda_{j}\left|\rho_{k}\right|  \| a_j \chi_{ B(0, r)}\| _{X_1} \| A_j \chi_{ B(0, r)}\| _{X'_1} \\
	& \lesssim \sum_{(j, k) \in \mathbb{N} \times \mathbb{Z}} \lambda_{j}\left|\rho_{k}\right|  \int_{Q_{j}} M\left[ \| A_j \chi_{ B(y, r)}\| _{X'_1}\right] d x 
	< \infty.
	\end{aligned}
	$$ 
\end{proof}

With the aid of Proposition 2.2, we extend into Theorem 8.2.\\
\begin{theorem}
	Satisfying the conditions of theorem 8.1 but where $ \left\{a_{j}\right\}_{j=1}^{\infty} \subset L^{\infty}\left(\mathbb{R}^{n}\right) $ such that $ f := \sum_{j=1}^{\infty} \lambda_{j} a_{j}$  converges in $ \mathcal{S}^{\prime}\left(\mathbb{R}^{n}\right) \cap L^{\mathrm{loc}}_{1}\left(\mathbb{R}^{n}\right) $, that
	\begin{equation}\label{78}
	\left|a_{j}\right| \leq \chi_{Q_{j}}, \quad \int_{\mathbb{R}^{n}} x^{\alpha} a_{j}(x) d x=0,
	\end{equation}
	for all multi-indices $ \alpha=\left(\alpha_{1}, \alpha_{2}, \ldots, \alpha_{n}\right) $ with  $|\alpha| := \alpha_{1}+\alpha_{2}+\cdots+\alpha_{n} < \infty $ and, that for all  $v>0$ 
	\begin{equation}\label{79}
	\left\|\left(\sum_{j=1}^{\infty}\left(\lambda_{j} \chi_{Q_{j}}\right)^{v}\right)^{1 / v}\right\|_{ L M_{X, q}^{\lambda}} \leq C_{v}\|f\|_{ L M_{X, q}^{\lambda}} .
	\end{equation}
	Here the constant  $C_{v}>0$  is independent of  f .
\end{theorem}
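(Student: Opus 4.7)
The goal is the reverse atomic estimate \eqref{79}. My strategy is to (a) reduce to the grand maximal function $\mathcal{M}f$ via the characterization of Section~7, (b) prove a pointwise domination of each coefficient $\lambda_j$ by $\mathcal{M}f$ on $Q_j$, and (c) sum via the vector-valued maximal inequality of Section~5. Since the hypotheses of Theorem~8.1 are in force, we already know $f \in LM_{X,q}^{\lambda}$, and Theorem~7.2 gives $\|f\|_{LM_{X,q}^{\lambda}} \sim \|\mathcal{M}f\|_{LM_{X,q}^{\lambda}}$. Thus it suffices to bound $\left\|\left(\sum_j (\lambda_j \chi_{Q_j})^v\right)^{1/v}\right\|_{LM_{X,q}^{\lambda}}$ by $\|\mathcal{M}f\|_{LM_{X,q}^{\lambda}}$.

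The key step is the pointwise comparison: for each $j$ and every $x \in Q_j$,
\[
\lambda_j \chi_{Q_j}(x) \;\lesssim\; M\!\left[\mathcal{M}f \cdot \chi_{cQ_j}\right]\!(x)
\]
for a fixed dilation $c>1$ independent of $j$. The mechanism is standard for nonsmooth decompositions: since $|a_j| \leq \chi_{Q_j}$, the atom $\lambda_j a_j$ produces oscillation of size $\lambda_j$ at scale $\ell(Q_j)$ on $Q_j$, while the vanishing moments of the remaining atoms $a_k$ force their contribution to $\mathcal{M}f$ at the point $x$ to be negligible after integrating against a bump supported in $cQ_j$. One selects a test function $\varphi_j$ adapted to $Q_j$ (in the spirit of Definition~2.3 and Lemma~2.2(v)) so that $\lambda_j \lesssim t^{-n}\varphi_j(t^{-1}\cdot) * f(x_0)$ for some $x_0 \in Q_j$ and $t \sim \ell(Q_j)$, and then transfers this lower bound uniformly to all $x \in Q_j$ via the Hardy--Littlewood maximal operator.

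Granted the pointwise bound, I apply Theorem~5.1 (vector-valued boundedness of $M$ on $LM_{X,q}^{\lambda}$, which is available since $X \in \mathbb{M}'$) to obtain
\[
\left\|\left(\sum_j \bigl[M(\mathcal{M}f\cdot \chi_{cQ_j})\bigr]^v\right)^{1/v}\right\|_{LM_{X,q}^{\lambda}} \;\lesssim\; \left\|\left(\sum_j \bigl(\mathcal{M}f \cdot \chi_{cQ_j}\bigr)^v\right)^{1/v}\right\|_{LM_{X,q}^{\lambda}}.
\]
The final step uses the bounded overlap of the enlarged cubes $\{cQ_j\}$ (either inherent in the Whitney structure or extracted from Lemma~2.2(ii)) to dominate the right-hand side by $\|\mathcal{M}f\|_{LM_{X,q}^{\lambda}}$, and then Theorem~7.2 replaces $\mathcal{M}f$ by $f$ on the outside.

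The main obstacle is the pointwise estimate in the middle step. Realising it rigorously requires a careful near-/far-field decomposition: at $x \in Q_j$ one has to isolate the contribution of $\lambda_j a_j$ to a regularisation of $f$ at scale $\ell(Q_j)$, while controlling the tail $\sum_{k \ne j} \lambda_k a_k$ using the vanishing moments $\int x^\alpha a_k\,dx = 0$ of arbitrary order to gain decay of order $\ell(Q_k)^{n+|\alpha|+1}/\operatorname{dist}(x,Q_k)^{n+|\alpha|+1}$, in the spirit of the bound $\mathcal{M}b_{j,k}(x) \lesssim \mathcal{M}f(x)\chi_{Q_{j,k}}(x) + 2^j \ell_{j,k}^{n+d+1}/|x-x_{j,k}|^{n+d+1}$ from Lemma~2.2(v). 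Handling the summability of this tail requires the constraint $\sigma_1 < \sigma - \lambda$ from~\eqref{12}, which is exactly what makes the geometric series in $k$ converge after applying the quasi-norm $\|\cdot\|_X$.
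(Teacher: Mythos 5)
Your proposal misidentifies what has to be proved, and the key estimate it rests on is false in the generality you need. Theorem 8.2 is an existence statement: given $f\in LM^{\lambda}_{X,q}(\mathbb{R}^n)$, one must \emph{construct} a decomposition $f=\sum_j\lambda_ja_j$ satisfying \eqref{78} and \eqref{79}. You instead treat the decomposition as given and try to verify \eqref{79} for it, via the pointwise bound $\lambda_j\chi_{Q_j}(x)\lesssim M[\mathcal{M}f\cdot\chi_{cQ_j}](x)$. For an arbitrary family of atoms this cannot hold: append to any fixed decomposition of $f$ the pair $\lambda a$ and $\lambda(-a)$ for a single atom $a$ supported in a fixed cube $Q$ with $\lambda$ arbitrarily large; the sum is unchanged and both new atoms satisfy \eqref{78}, but the left-hand side of \eqref{79} blows up while $\|f\|_{LM^{\lambda}_{X,q}}$ does not. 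No near-/far-field analysis or use of the vanishing moments can recover $\lambda_j$ from $f$, because distinct atoms need not live at separated scales or locations. Relatedly, your step (c) assumes bounded overlap of $\{cQ_j\}$, which is not available for a generic family and, even for the Whitney cubes the paper uses, fails across generations: a point deep inside $\{\mathcal{M}f>2^J\}$ lies in cubes of every level $j\le J$.

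The paper's proof avoids all of this by constructing the atoms explicitly: it runs the Calder\'on--Zygmund decomposition of Lemma 2.1 relative to the grand maximal function, sets $\mathcal{O}_j=\{\mathcal{M}f>2^j\}$, shows $f=\sum_j(g_{j+1}-g_j)=\sum_{j,k}A_{j,k}$ in $\mathcal{S}'(\mathbb{R}^n)$ (this convergence is the content of Lemma 8.3 and is where \eqref{12} enters), and takes $a_{j,k}=A_{j,k}/(C_02^j)$, $\kappa_{j,k}=C_02^j$, with $Q_{j,k}$ the Whitney cubes of $\mathcal{O}_j$. Then \eqref{79} is essentially automatic from the construction: bounded overlap \emph{within each generation} gives $\sum_k\chi_{Q_{j,k}}\sim\chi_{\mathcal{O}_j}$, the nesting $\mathcal{O}_{j+1}\subset\mathcal{O}_j$ lets one replace $\sum_j(2^j\chi_{\mathcal{O}_j})^v$ by $\bigl(\sum_j 2^j\chi_{\mathcal{O}_j\setminus\mathcal{O}_{j+1}}\bigr)^v$ (it is this telescoping, not bounded overlap, that controls the cross-level accumulation), and $2^j<\mathcal{M}f$ on $\mathcal{O}_j$ yields $\beta\lesssim\|\mathcal{M}f\|_{LM^{\lambda}_{X,q}}\lesssim\|f\|_{LM^{\lambda}_{X,q}}$ by Theorem 7.2. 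The only ingredients of your plan that survive are the final appeal to Theorem 7.2 and the reliance on the Whitney bounded-overlap structure; the construction itself, which is the substance of the theorem, is absent from your proposal, and the pointwise domination you propose in its place cannot be established.
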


\begin{lemma}\cite{BS2014}
	Let $\varphi \in \mathcal{S}\left(\mathbb{R}^{n}\right)$. With the same notation as Lemma $2.1$, then
	\begin{equation}\label{30}
	\left|\left\langle b_{j}, \varphi\right\rangle\right| \leq C_{\varphi}\left\{\sum_{l=0}^{\infty}\left(\frac{1}{2^{l n}}\left\|\mathcal{M} f \cdot \chi \mathcal{O}_{j}\right\|_{L^{1}\left(B\left(2^{l}\right)\right)}\right)^{\mu}\right\}^{1 / \mu},
	\end{equation}
	where $\mu := \frac{n+d+1}{n}$ and the constant $C_{\varphi}$ in $\eqref{30}$ depends on $\varphi$ but not on $j$ or $k$.
\end{lemma}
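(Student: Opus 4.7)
The plan is to establish the pointwise distributional estimate for $\langle b_j,\varphi\rangle$ by exploiting the moment-vanishing property of each piece $b_{j,k}$ against polynomials $\eta_{j,k}\cdot P$ in $\mathcal{P}_d$, Taylor-approximating the Schwartz test function $\varphi$, and then grouping the Whitney cubes by dyadic distance from the origin so that Hölder's inequality delivers the $\ell^\mu$-type sum on the right-hand side.

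First, fix $k\in K_j$ and let $P_{j,k}^{\varphi}\in\mathcal{P}_d(\mathbb{R}^n)$ be the Taylor polynomial of $\varphi$ at the cube center $x_{j,k}$ of degree $d$. By Lemma 2.1(v) applied with $P=P_{j,k}^{\varphi}$,
$$\langle b_{j,k},\varphi\rangle=\langle (f-c_{j,k})\eta_{j,k},\,\varphi-P_{j,k}^{\varphi}\rangle.$$
Taylor's theorem, combined with the Schwartz semi-norms in Definition 2.3(i), yields
$$|\varphi(x)-P_{j,k}^{\varphi}(x)|\lesssim \rho_N(\varphi)\,\frac{\ell_{j,k}^{d+1}}{(1+|x_{j,k}|)^{N}},\qquad x\in Q_{j,k},$$
for any $N$ to be chosen in the last step. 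Meanwhile, since $b_{j,k}$ is supported in $Q_{j,k}$ and $\mathcal{M}b_{j,k}\geq|b_{j,k}|$, the pointwise control in Lemma 2.1(v) gives $|b_{j,k}(x)|\lesssim \mathcal{M}f(x)$ for $x\in Q_{j,k}\subset\mathcal{O}_j$. Integrating,
$$|\langle b_{j,k},\varphi\rangle|\lesssim C_\varphi\,\frac{\ell_{j,k}^{d+1}}{(1+|x_{j,k}|)^{N}}\int_{Q_{j,k}}\mathcal{M}f(x)\chi_{\mathcal{O}_j}(x)\,dx.$$

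Second, organize $\{Q_{j,k}\}_{k\in K_j}$ into dyadic location shells: set $K_j^0=\{k:Q_{j,k}\subset B(2)\}$ and $K_j^l=\{k:Q_{j,k}\subset B(2^{l+1}),\,Q_{j,k}\not\subset B(2^l)\}$ for $l\geq 1$, so that for $k\in K_j^l$ one has $1+|x_{j,k}|\sim 2^l$ and $\ell_{j,k}\lesssim 2^l$. Using the bounded intersection property of Lemma 2.1(ii), summation in $k$ over each shell gives
$$\sum_{k\in K_j^l}|\langle b_{j,k},\varphi\rangle|\lesssim C_\varphi\,\frac{2^{l(d+1)}}{2^{lN}}\,\|\mathcal{M}f\cdot\chi_{\mathcal{O}_j}\|_{L^1(B(2^{l+1}))}.$$

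Finally, choose $N=n+d+1+\varepsilon$ with $\varepsilon>0$, so that the prefactor becomes $2^{-ln}\cdot 2^{-l\varepsilon}$. Applying Hölder's inequality in $l$ with exponents $\mu=(n+d+1)/n$ and $\mu'=(n+d+1)/(d+1)$ and absorbing the shift $l+1\mapsto l$ into a constant,
$$|\langle b_j,\varphi\rangle|\lesssim C_\varphi\!\sum_{l\geq 0}2^{-l\varepsilon}\bigl(2^{-ln}\|\mathcal{M}f\cdot\chi_{\mathcal{O}_j}\|_{L^1(B(2^{l}))}\bigr)\lesssim C_\varphi\Bigl\{\sum_{l\geq 0}\bigl(2^{-ln}\|\mathcal{M}f\cdot\chi_{\mathcal{O}_j}\|_{L^1(B(2^l))}\bigr)^{\mu}\Bigr\}^{1/\mu},$$
where the geometric factor $(\sum_l 2^{-l\varepsilon\mu'})^{1/\mu'}$ is absorbed into the constant depending on $\varphi$, giving \eqref{30}.

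The main obstacle is the final bookkeeping step: the Taylor decay rate $N$ must be tuned large enough to outpace the growth $\ell_{j,k}^{d+1}\sim 2^{l(d+1)}$ and to leave an $\varepsilon$-power surplus for the Hölder pairing, yet the resulting weight must land exactly on $2^{-ln}$ so that the $\mu$-norm on the right-hand side takes the precise form claimed. A secondary subtlety is that the bounded overlap of the Whitney cubes and the shift from $B(2^{l+1})$ to $B(2^l)$ must be shown to cost only dimensional constants independent of $j$.
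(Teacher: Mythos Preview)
The paper does not supply its own proof of this lemma; it is quoted verbatim from \cite{BS2014} and used as a black box in the proof of Lemma~8.3. So there is no in-paper argument to compare against.

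Your reconstruction is the standard one and is essentially what appears in the cited source: exploit the cancellation $\langle b_{j,k},\eta_{j,k}P\rangle=0$ from Lemma~2.1(v) to subtract the degree-$d$ Taylor polynomial of $\varphi$ at $x_{j,k}$, bound the remainder by $\ell_{j,k}^{d+1}(1+|x_{j,k}|)^{-N}$ using the Schwartz seminorms, control $|b_{j,k}|$ on $Q_{j,k}$ by $\mathcal{M}f\cdot\chi_{\mathcal{O}_j}$, and then regroup the Whitney cubes into dyadic annuli and apply H\"older in $l$ with exponents $\mu,\mu'$. One small point of care: your justification $\mathcal{M}b_{j,k}\geq |b_{j,k}|$ is slightly indirect; the cleaner route is to use $b_{j,k}=(f-c_{j,k})\eta_{j,k}$ together with the standard bound $|c_{j,k}\eta_{j,k}|\lesssim 2^j\leq \mathcal{M}f$ on $Q_{j,k}\subset\mathcal{O}_j$ and $|f|\leq\mathcal{M}f$ a.e.\ by Lebesgue differentiation. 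With that adjustment your argument is complete and matches the literature.
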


\begin{lemma}
	Let $ 1<q\leq \infty$, $0\leq \lambda < \sigma$, $X\in \mathbb{M}'$, $f \in  L M_{X, q}^{\lambda}\left(\mathbb{R}^{n}\right)$ and  $\sigma, \sigma_1$ satisfies \eqref{12}. Then in the notation of Lemma 2.1, in the topology of $\mathcal{S}^{\prime}\left(\mathbb{R}^{n}\right)$,  $g_{j} \rightarrow 0$ as $j \rightarrow-\infty$ and $b_{j} \rightarrow 0$ as $j \rightarrow \infty$. In particular
	$$
	f=\sum_{j=-\infty}^{\infty}\left(g_{j+1}-g_{j}\right).
	$$
	
\end{lemma}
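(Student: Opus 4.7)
The plan is to establish the two convergences $g_j\to 0$ as $j\to -\infty$ and $b_j\to 0$ as $j\to +\infty$ in $\mathcal{S}'(\mathbb{R}^n)$ separately, and then derive the representation formula by a telescoping identity. For the first limit I would invoke Lemma 2.1(iv): since the good part of the Calderón--Zygmund decomposition at level $2^j$ is uniformly bounded by a multiple of $2^j$, for any $\varphi\in\mathcal{S}(\mathbb{R}^n)$ one has
$$
|\langle g_j,\varphi\rangle|\leq\|g_j\|_{L^\infty}\|\varphi\|_{L^1}\lesssim 2^j\|\varphi\|_{L^1}\longrightarrow 0\qquad(j\to-\infty),
$$
and this is immediate.

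For $b_j\to 0$ as $j\to +\infty$ I would start from the pointwise bound supplied by Lemma 8.2,
$$
|\langle b_j,\varphi\rangle|\leq C_\varphi\Bigl\{\sum_{l=0}^{\infty}\bigl(2^{-ln}\|\mathcal{M}f\cdot\chi_{\mathcal{O}_j}\|_{L^1(B(2^l))}\bigr)^\mu\Bigr\}^{1/\mu},
$$
and apply dominated convergence, first termwise and then over the series. For each fixed $l$ the integral $\|\mathcal{M}f\cdot\chi_{\mathcal{O}_j}\|_{L^1(B(2^l))}\to 0$, because $\mathcal{M}f\lesssim Mf$, Theorem 3.1 together with $X\in\mathbb{M}'$ places $Mf$ in $LM_{X,q}^\lambda\subset L^1_{\mathrm{loc}}$ (via Proposition 2.2 and the ball-Banach property), and $|\mathcal{O}_j\cap B(2^l)|\downarrow 0$ since $Mf<\infty$ a.e. To interchange with the sum I would dominate each summand by $\bigl(2^{-ln}\|\mathcal{M}f\|_{L^1(B(2^l))}\bigr)^\mu$ and invoke the ball-Banach pairing
$$
\|\mathcal{M}f\|_{L^1(B(2^l))}\lesssim\|\chi_{B(2^l)}\|_{X'}\|\mathcal{M}f\cdot\chi_{B(2^l)}\|_X\lesssim 2^{l(\sigma'+\lambda+1/q)}\|\mathcal{M}f\|_{LM_{X,q}^\lambda},
$$
where $\sigma'$ is the analogous scaling exponent for the associate space $X'$. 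The summability of the resulting geometric series then reduces to $n\mu = n+d+1 > \sigma'+\lambda+1/q$, which can be arranged by choosing the vanishing-moment order $d$ in Lemma 2.1 large enough. This summability verification is the main technical obstacle, since it requires calibrating the moment order against the growth of $X'$ and the Morrey exponent.

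Once both limits are in place, I would combine them with the relation $g_{j+1}-g_j=b_j-b_{j+1}$ (an immediate consequence of $f=g_j+b_j$) and telescope:
$$
\sum_{j=-N}^{N-1}(g_{j+1}-g_j)=g_N-g_{-N}=(f-b_N)-g_{-N}.
$$
Letting $N\to\infty$ in $\mathcal{S}'(\mathbb{R}^n)$, the two previously established limits give $b_N\to 0$ and $g_{-N}\to 0$, so the right-hand side converges to $f$, yielding $f=\sum_{j=-\infty}^{\infty}(g_{j+1}-g_j)$ as claimed.
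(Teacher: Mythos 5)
Your overall strategy coincides with the paper's: bound $|\langle b_j,\varphi\rangle|$ via Lemma 8.2, pass to the limit by dominated convergence using the pairing $\|g\|_{L^1(B)}\le\|\chi_B\|_{X'}\,\|g\chi_B\|_X$ together with the scaling of the local Morrey quasinorm, and finish by telescoping $g_{j+1}-g_j$. You are in fact more complete than the paper on one point: you explicitly prove $g_j\to 0$ as $j\to-\infty$ from the $L^\infty$ bound in Lemma 2.1(iv) (silently correcting the paper's $2^{-j}$ to the standard $2^{j}$), whereas the paper only treats $b_j\to 0$ and leaves the other limit implicit.

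There is, however, a concrete error in your summability step. The series you must control is $\sum_{l}\bigl(2^{-ln}\|\mathcal{M}f\|_{L^1(B(2^l))}\bigr)^\mu\lesssim\sum_l 2^{\,l\mu(\sigma'+\lambda+1/q-n)}$: the exponent $\mu$ is applied to the \emph{entire} summand of \eqref{30}, so it multiplies the whole exponent and cannot change its sign. Convergence is therefore equivalent to $\sigma'+\lambda+1/q<n$, i.e.\ (with $\sigma'=n-\sigma$) to $\sigma-\lambda>1/q$, and is completely independent of $d$. Your reduction to the condition $n\mu=n+d+1>\sigma'+\lambda+1/q$, and the proposed remedy of taking the vanishing-moment order $d$ large, do not work: for $\sigma=1$, $\lambda=0.9$, $q=2$ the hypotheses $0\le\lambda<\sigma$ hold, yet the ratio of your geometric series exceeds $1$ for every $d$. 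The paper at this point simply invokes \eqref{12} (and is itself not fully rigorous there). The correct repair is not a larger $d$ but a sharper pointwise bound: restricting the defining integral of $\|\cdot\|_{LM_{X,q}^{\lambda}}$ to $r\in[R,2R]$ gives $\|\mathcal{M}f\,\chi_{B(0,R)}\|_X\lesssim R^{\lambda}\|\mathcal{M}f\|_{LM_{X,q}^{\lambda}}$ without the spurious factor $R^{1/q}$, after which the geometric ratio becomes $2^{\mu(\lambda-\sigma)}<1$ under the standing hypothesis $\lambda<\sigma$.
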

\begin{proof}
	Observe that
	$$
	\begin{aligned}
	\frac{1}{2^{l n}}\left\|\mathcal{M} f \cdot \chi \mathcal{O}_{j}\right\|_{L^{1}\left(B\left(0,2^{l}\right)\right)} &\lesssim \frac{1}{2^{l n}}\|\mathcal{M} f\|_{L^{1}\left(B\left(0,2^{l}\right)\right)}\\
	&\lesssim \frac{1}{2^{l
			\sigma}}\|\mathcal{M} f \chi_{B(0,2^l)}\|_{X}\\
	&\lesssim \frac{1}{2^{l
			\sigma} (2^{l})^{-\lambda-\frac{1}{q}}}\|f\|_{H  L M_{X, q}^{\lambda}}\\
	&\lesssim \frac{1}{2^{l \sigma
		}}\|f\|_{L M_{X, q}^{\lambda}}.
	\end{aligned}
	$$
	Note that \eqref{12} and $\mu := \frac{n+d+1}{n}$.
	$$
	\sum_{l=1}^{\infty}\left(\frac{1}{2^{\l
			\sigma} 2^{-\l\lambda-\frac{\l}{q}}}\right)^{\mu}<\infty
	$$
	Consequently, we may use the Lebesgue convergence theorem to conclude that $b_{j} \rightarrow 0$ as $j \rightarrow \infty$. Hence, it follows that $f=\lim _{j \rightarrow \infty} g_{j}$ in $\mathcal{S}^{\prime}\left(\mathbb{R}^{n}\right)$. Consequently, it follows from Lemma 2.1 that $f=\lim _{j \rightarrow \infty} g_{j}=\lim _{j, k \rightarrow \infty} \sum_{l=-k}^{j}\left(g_{l+1}-g_{l}\right)$ in $\mathcal{S}^{\prime}\left(\mathbb{R}^{n}\right)$.
\end{proof}
\par 
Next, to prove Theorem 8.2.
\begin{proof}
	For each $j \in \mathbb{Z}$, consider the level set
	$$
	\mathcal{O}_{j} :=\left\{x \in \mathbb{R}^{n}: \mathcal{M} f(x)>2^{j}\right\}
	$$
	Then it follows immediately from the definition that
	$$
	\mathcal{O}_{j+1} \subset \mathcal{O}_{j} .
	$$
	Apply Lemma 2.1, then $f$ can be decomposed as
	$$
	f=g_{j}+b_{j}, \quad b_{j}=\sum_{k} b_{j, k}, \quad b_{j, k}=\left(f-c_{j, k}\right) \eta_{j, k}
	$$
	where each $b_{j, k}$ is supported in a cube $Q_{j, k}$ as described in Lemma 2.1.
	$$
	f=\sum_{j=-\infty}^{\infty}\left(g_{j+1}-g_{j}\right)
	$$
	with the series converging in the sense of distributions from Lemma 6.4. 
	$$
	f=\sum_{j, k} A_{j, k}, \quad g_{j+1}-g_{j}=\sum_{k} A_{j, k} \quad(j \in \mathbb{Z})
	$$
	in the sense of distributions, where each $A_{j, k}$, supported in $Q_{j, k}$, satisfies the pointwise estimate $\left|A_{j, k}(x)\right| \leq C_{0} 2^{j}$ for some universal constant $C_{0}$ and the moment condition $\int_{\mathbb{R}^{n}} A_{j, k}(x) q(x) d x=0$ for every $q(x) \in \mathcal{P}_{d}\left(\mathbb{R}^{n}\right)$. With these observations in mind, write
	$$
	a_{j, k} := \frac{A_{j, k}}{C_{0} 2^{j}}, \quad \kappa_{j, k} := C_{0} 2^{j} .
	$$
	Then we shall obtain that each $a_{j, k}$ satisfies
	$$
	\left|a_{j, k}\right| \leq \chi_{Q_{j, k}}, ~~~\quad~  \int_{\mathbb{R}^{n}} x^{\alpha} a_{j, k}(x) d x=0 
	$$
	and that $f=\sum_{j, k} \kappa_{j, k} a_{j, k}$ in the topology of $H  L M_{X, q}^{\lambda}\left(\mathbb{R}^{n}\right)$. Rearrange $\left\{a_{j, k}\right\}$ to obtain $\left\{a_{j}\right\}$. Do the same rearrangement for  $\left\{\lambda_{j, k}\right\}$.
	To establish \eqref{79}, write
	$$
	\beta  := \left\|\left(\sum_{j=-\infty}^{\infty}\left|\lambda_{j} \chi_{Q_{j}}\right|^{v}\right)^{1 / v}\right\|_{L M_{X, q}^{\lambda}} .
	$$
	Since
	$$
	\left\{\left(\kappa_{j, k} ; Q_{j, k}\right)\right\}_{j, k}=\left\{\left(\lambda_{j} ; Q_{j}\right)\right\}_{j},
	$$
	we have
	$$
	\beta =\left\|\left(\sum_{j=-\infty}^{\infty} \sum_{k \in K_{j}}\left|\kappa_{j, k} \chi_{Q_{j, k}}\right|^{v}\right)^{1 / v}\right\|_{L M_{X, q}^{\lambda}} .
	$$
	By using the definition of $\kappa_{j}$, we then have
	$$
	\beta =C_{0}\left\|\left(\sum_{j=-\infty}^{\infty} \sum_{k \in K_{j}}\left|2^{j} \chi_{Q_{j, k}}\right|{ }^{v}\right)^{1 / v}\right\|_{L M_{X, q}^{\lambda}}=C_{0}\left\|\left(\sum_{j=-\infty}^{\infty} 2^{j v} \sum_{k \in K_{j}} \chi_{Q_{j, k}}\right)^{1 / v}\right\|_{L M_{X, q}^{\lambda}} .
	$$
	Observe that \eqref{88}, together with the bounded overlapping property, yields
	$$
	\chi_{\mathcal{O}_{j}}(x) \leq \sum_{k \in K_{j}} \chi_{Q_{j, k}}(x) \leq \sum_{k \in K_{j}} \chi_{200 Q_{j, k}}(x) \lesssim \chi_{\mathcal{O}_{j}}(x) \quad\left(x \in \mathbb{R}^{n}\right) .
	$$
	Thus, 
	$$
	\beta  \lesssim \left\|\left(\sum_{j=-\infty}^{\infty}\left(2^{j} \chi_{\mathcal{O}_{j}}\right)^{v}\right)^{1 / v}\right\|_{L M_{X, q}^{\lambda}} .
	$$
	Recalling that $\mathcal{O}_{j} \supset \mathcal{O}_{j+1}$ for each $j \in \mathbb{Z}$, 
	$$
	\sum_{j=-\infty}^{\infty}\left(2^{j} \chi_{\mathcal{O}_{j}}(x)\right)^{v} \sim\left(\sum_{j=-\infty}^{\infty} 2^{j} \chi_{\mathcal{O}_{j}}(x)\right)^{v} \sim\left(\sum_{j=-\infty}^{\infty} 2^{j} \chi_{\mathcal{O}_{j} \backslash \mathcal{O}_{j+1}}(x)\right)^{v} \quad\left(x \in \mathbb{R}^{n}\right) .
	$$
	Then, 
	$$
	\beta  \lesssim\left\|\sum_{j=-\infty}^{\infty} 2^{j} \chi_{\mathcal{O}_{j} \backslash \mathcal{O}_{j+1}}\right\|_{L M_{X, q}^{\lambda}} .
	$$
	It follows by the definition of $\mathcal{O}_{j}$ that $2^{j}<\mathcal{M} f(x)$ for all $x \in \mathcal{O}_{j}$. Hence, 
	$$
	\beta  \lesssim \left\|\sum_{j=-\infty}^{\infty} \chi_{\mathcal{O}_{j} \backslash \mathcal{O}_{j+1}} \mathcal{M} f\right\|_{L M_{X, q}^{\lambda}} \lesssim\|\mathcal{M} f\|_{L M_{X, q}^{\lambda}},
	$$
	So we receive the proof of Theorem 8.2.
\end{proof}
\section{The Hardy operator  on the local Morrey space associated with ball quasi-Banach function spaces}
\begin{theorem}
	Suppose $ 1<q\leq \infty$, $0\leq \lambda < \sigma$, $X\in \mathbb{M}'$. Then
	$
	\|H f\|_{ L M_{X, q}^{\lambda}} \lesssim \|f\|_{ L M_{X, q}^{\lambda}}.
	$
\end{theorem}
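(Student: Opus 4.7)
The plan is to reduce Theorem 9.1 to the boundedness of the Hardy--Littlewood maximal operator already established in Theorem 3.1. The natural $n$-dimensional realization of the Hardy operator $H$, extending the one-dimensional operator $Hg(r)=\int_0^r g(t)\,dt$ introduced at the start of Section 3, is the Hardy averaging operator
$$
Hf(x) := \frac{1}{|B(0,|x|)|}\int_{B(0,|x|)} f(y)\,dy,
$$
and the key starting observation will be the pointwise bound $|Hf(x)| \leq C\, Mf(x)$ for almost every $x \in \mathbb{R}^n$. This follows because $B(0,|x|) \subset B(x,2|x|)$ with $|B(0,|x|)| \sim |B(x,2|x|)|$, so the average of $|f|$ over $B(0,|x|)$ is dominated by a constant multiple of the centered maximal average of $f$ at $x$.

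Once this pointwise estimate is in hand, axiom (ii) of Definition 2.1 yields $\|Hf\,\chi_{B(0,r)}\|_X \lesssim \|Mf\,\chi_{B(0,r)}\|_X$ for every $r>0$. Taking the $L_q$ quasi-norm in $r$ against $r^{-\lambda q}\,dr/r$ then gives $\|Hf\|_{LM_{X,q}^\lambda} \lesssim \|Mf\|_{LM_{X,q}^\lambda}$. Since $X\in\mathbb{M}'$ implies $X\in\mathbb{M}$ by Assumption 3.1, Theorem 3.1 delivers $\|Mf\|_{LM_{X,q}^\lambda} \lesssim \|f\|_{LM_{X,q}^\lambda}$, closing the chain.

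An alternative route, internal to Section 3's framework, would be to mimic Lemmas 3.1--3.3 to obtain the size estimate
$$
\|Hf\,\chi_{B(0,r)}\|_X \lesssim r^{\sigma} \int_r^\infty \|f\,\chi_{B(0,t)}\|_X\,\frac{dt}{t^{\sigma+1}},
$$
then reparametrize via $g_X(t)=\|f\,\chi_{B(0,t^{-1/\sigma})}\|_X$ and invoke Lemmas 3.4, 3.6, 3.5 in succession, exactly as in the proof of Theorem 3.1. I expect the only subtle point in either approach to be the verification of the pointwise (or averaged) domination of $Hf$ by $Mf$, or equivalently the analog of Lemma 3.3 for the Hardy averaging operator; once that step is in place, everything else is a direct quotation of the boundedness of $M$ from Section 3 and of the one-dimensional Hardy inequality encoded in Lemma 3.6.
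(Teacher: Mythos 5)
Your proof is correct, but it takes a genuinely different route from the paper. You reduce everything to the pointwise domination $|Hf(x)|\leq 2^{n}Mf(x)$ (valid since $B(0,|x|)\subset B(x,2|x|)$ and $|B(x,2|x|)|=2^{n}|B(0,|x|)|$), after which the lattice axiom (ii) of Definition 2.1 and Theorem 3.1 finish the argument; note you do not even need the second, ``internal to Section 3'' route you sketch, since the pointwise bound already does all the work. The paper instead treats Theorem 9.1 as an application of the nonsmooth decomposition of Section 8: it writes $f=\sum_{j}\lambda_{j}a_{j}$ via Theorem 8.2, introduces the spherical symmetrization $Sf(x)=\int_{\mathrm{SO}(n)}f(Ax)\,d\mu(A)$, uses the identity $Hf=HSf$ together with $|HSa_{j}|\lesssim S\chi_{Q_{j}}$, and concludes from the estimate $\bigl\|\sum_{j}\lambda_{j}\chi_{Q_{j}}\bigr\|_{LM_{X,q}^{\lambda}}\lesssim\|f\|_{LM_{X,q}^{\lambda}}$ of Theorem 8.2. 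Your argument is shorter, avoids the machinery of Sections 6--8 entirely, and in fact only requires $X\in\mathbb{M}$ rather than the stronger hypothesis $X\in\mathbb{M}'$ demanded by the decomposition theorem, so it proves the statement under formally weaker assumptions. What the paper's approach buys is a demonstration of how the atomic decomposition is used in practice, and a template that would survive for operators which are \emph{not} pointwise dominated by $M$ and for which the moment conditions on the atoms genuinely matter; for the Hardy averaging operator itself, however, that generality is not needed, and your direct comparison with $M$ is the more economical proof.
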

\begin{proof}
	Let $f=\sum_{j=1}^{\infty} \lambda_{j} a_{j}$, $\mu$ stands for the Haar measure of $\mathrm{SO}(n)$\cite{GH2017}.
	$$
	S f(x) := \int_{\mathrm{SO}(n)} f(A x) d \mu(A)
	$$
	Note that
	$$
	S:  L M_{X, q}^{\lambda}(\mathbb{R}) \rightarrow  L M_{X, q}^{\lambda}(\mathbb{R})
	$$
	is a bounded linear opeator. Since
	$$
	\begin{aligned}
	H f(x) & \sim \frac{1}{|x|^{n}} \int_{B(|x|)} f(y) d y \\
	&=\int_{\operatorname{SO}(n)} \frac{1}{|A x|^{n}} \int_{B(|A x|)} f(y) d y d \mu(A) \\
	&=\int_{\operatorname{SO}(n)} \frac{1}{|x|^{n}} \int_{B(|x|)} f(A y) d y d \mu(A) 
	=H S f(x),
	\end{aligned}
	$$
	therefore
	$$
	H f=H S f=\sum_{j=1}^{\infty} \lambda_{j} H S a_{j} .
	$$
	since $a_{j}$ is compactly supported 
	$
	\left|H S a_{j}\right| \lesssim S \chi_{Q_{j}},
	$ and Theorem 8.2, 
	$$
	\begin{aligned}
	\|H f\|_{ L M_{X, q}^{\lambda}} & \leq\left\|\sum_{j=1}^{\infty} \lambda_{j} H S a_{j}\right\|_{ L M_{X, q}^{\lambda}} 
	\lesssim \left\|\sum_{j=1}^{\infty} \lambda_{j} S_{\chi_{Q_j}}\right\|_{ L M_{X, q}^{\lambda}} \\
	& \lesssim \left\|\sum_{j=1}^{\infty} \lambda_{j} \chi_{Q_{j}}\right\|_{ L M_{X, q}^{\lambda}}
	\lesssim \|f\|_{ L M_{X, q}^{\lambda}}
	\end{aligned}
	$$
	
\end{proof}

\hspace*{-0.6cm}\textbf{\bf Competing interests}\\
	The authors declare that they have no competing interests.\\
	
\hspace*{-0.6cm}\textbf{\bf Funding}\\
	The research was supported by Natural Science Foundation of China (Grant Nos. 12061069).\\
	
\hspace*{-0.6cm}\textbf{\bf Authors contributions}\\
	All authors contributed equality and significantly in writing this paper. All authors read and approved the final manuscript.\\
	
\hspace*{-0.6cm}\textbf{\bf Acknowledgments}\\
	The authors would like to express their thanks to the referees for valuable advice regarding previous version of this paper.\\
	
	\hspace*{-0.6cm}\textbf{\bf Authors detaials}\\
	Mingwei shi and Jiang Zhou*, moluxiangfeng888@163.com and zhoujiang@xju.edu.cn, College of Mathematics and System Science, Xinjiang University, Urumqi, 830046, P.R China.

\vskip 0.5cm

\bigskip
\noindent Mingwei shi and Jiang Zhou\\
\medskip
\noindent
College of Mathematics and System Sciences\\
Xinjiang University\\
Urumqi 830046\\
\smallskip
\noindent{D-mail }:\\
\texttt{moluxiangfeng888@163.com} (Mingwei shi)\\
\texttt{zhoujiang@xju.edu.cn} (Jiang Zhou)
\bigskip \medskip

\begin{thebibliography}{999}
	
	\bibitem{CB1938}C. B. Morrey, On the solutions of quasi-linear elliptic partial differential equations, Trans. Amer. Math, 1938.
	\bibitem{DR1975} D. R. Adams, A note on Riesz potentials, Duke Math, 1975.
	\bibitem{Sn2009} N. Samko, Weighted Hardy and singular operators in Morrey spaces. Journal of Mathematical Analysis and Applications, 2009.
	\bibitem{TY2014}T. Iida,  Y. Sawano,  H. Tanaka, Atomic decomposition for Morrey spaces. Zeitschrift für Analysis und ihre Anwendungen, 2014.

\bibitem{BV2004}V.I. Burenkov, H. V. Guliyev, Necessary and sufficient conditions for boundedness
of the maximal operator in the local Morrey-type spaces, Studia Math, 2004.

\bibitem{BN2010}V. I. Burenkov, E. D. Nursultanov, Description of interpolation spaces for local Morrey-type spaces. Proceedings of the Steklov Institute of Mathematics, 2010.
\bibitem{BJ2011}V. I. Burenkov, P. Jain , T. V. Tararykova,  On boundedness of the Hardy operator in Morrey-type spaces. Eurasian Mathematical Journal, 2011.

\bibitem{BS2014}T. Batbold, Y. Sawano, Decompositions for local Morrey spaces, Eurasian Math, 2014.

\bibitem{GH2017}V. S. Guliyev, S. G. Hasanov,  Y. Sawano, Decompositions of local Morrey-type spaces. Positivity, 2017.

\bibitem{A1}V. I. Burenkov, H. V. Guliyev, V. S. Guliyev, : Necessary and sufficient conditions for boundedness of the fractional maximal operators in the local Morrey-type spaces, Comput. Appl. Math, 2007.

\bibitem{A2} V. I. Burenkov, H. V. Guliyev, V. S. Guliyev, On boundedness of the fractional maximal operator from complementary Morrey-type spaces to Morrey-type spaces, Contemp. Math, 2007.

\bibitem{A3}V. I. Burenkov , V. S. Guliyev, Necessary and sufficient conditions for the boundedness of the Riesz potential in local Morrey-type spaces, Potential Anal, 2009.

\bibitem{A4} V. I. Burenkov, A. Gogatishvili, V. S. Guliyev, R. Ch. Mustafayev,  Boundedness of the fractional maximal operator in local Morrey-type spaces. Complex Var. Elliptic Equ,2010. 
\bibitem{A5} V. I. Burenkov, V. S. Guliyev, A. Serbetci,  T. V. Tararykova, Necessary and sufficient conditions for the boundedness of genuine singular integral operators in local Morrey-type spaces, Eurasian Math, 2010.

\bibitem{A6}V. I. Burenkov, A. Gogatishvili, V. S. Guliyev, R. Ch. Mustafayev,  Boundedness of the fractional maximal operator in local Morrey-type spaces, Potential Anal, 67-87 (2011)

\bibitem{Wo1931}W. Orlicz, $\ddot{U}$ber konjugierte exponentenfolgen, Studia Mathematica, 1931.

 \bibitem{VA2008} V. Kokilashvili,  A. Meskhi, Maximal and Potential Operators in Variable Morrey Spaces Defined on Nondoubling Quasimetric Measure Spaces[J]. Bulletin of the Georgian National Academy of Sciences, 2008.
\bibitem{YC2020} T. L. Yee, K. L. Cheung, K. P. Ho,   C. K. Suen,  Local sharp maximal functions, geometrical maximal functions and rough maximal functions on local Morrey spaces with variable exponents. Math. Inequal. Appl,  2020.

\bibitem{HK2291}K. P. Ho, Calderón operator on local Morrey spaces with variable exponents, Mathematics, 2021.

\bibitem{HP221}K. P. Ho, Singular integral operators and sublinear operators on Hardy local Morrey spaces with variable exponents, Bulletin des Sciences Mathématiques, 2021.
\bibitem{AB1961}A.   Benedek, R. Panzone, The space $L^{\vec{p}}$, with mixed norm, Duke Mathematical Journal, 1961.

\bibitem{NT2019}T. Nogayama,  Mixed Morrey spaces, Positivity ,2019. 
	
\bibitem{ZH2021} H  Zhang, J Zhou, The Boundedness of Fractional Integral Operators in Local and Global Mixed Morrey-type Spaces, Positivity, 2021.



\bibitem{SJ2022} M. W. Shi, J Zhou, The Hardy-Littewood maximal operator in Local and
Global mixed Morrey-type Spaces, Submitted.

\bibitem{SD2022}  M. W. Shi, J Zhou,  Decompositions of Local mixed Morrey-type spaces and Application, Submitted.
  
\bibitem{WM2022}M.  Wei,  Boundedness criterion for some integral operators on generalized mixed Morrey spaces and generalized mixed Hardy–Morrey spaces, Banach Journal of Mathematical Analysis, 2022.  
  
\bibitem{SS2017} Y. Sawano, K. P. Ho, D. Yang ,  S. Yang,  Hardy spaces for ball quasi-Banach function spaces, Dissertationes mathematicae, 2017.
 
\bibitem{Z2021} Y. Zhang , D. Yang , W. Yuan , S. Wang,  Weak Hardy-type spaces associated with ball quasi-Banach function spaces I: Decompositions with applications to boundedness of Calderón-Zygmund operators, Science China Mathematics, 2021.

\bibitem{W2021} S, Wang , D. Yang , W.  Yuan, Y. Zhang  Weak Hardy-type spaces associated with ball quasi-Banach function spaces II: Littlewood–Paley characterizations and real interpolation, The Journal of Geometric Analysis, 2021.

\bibitem{HA2019}K. P. Ho. Approximation in vanishing rearrangement-invariant Morrey spaces and applications, Revista de la Real Academia de Ciencias Exactas,2019. 

\bibitem{HK2019}K. P. Ho,  Weak type estimates of singular integral operators on Morrey–Banach spaces, Integral Equations and Operator Theory, 2019.


 










\bibitem{ZYD202}Y. Zhang , L. Huang, D. Yang , W. Yuan  New ball Campanato-type function spaces and their applications, The Journal of Geometric Analysis, 2022.

  
  
  \bibitem{HP2022}K. P. Ho, Boundedness of operators and inequalities on Morrey–Banach spaces, Publications of the Research Institute for Mathematical Sciences, 2022.
  
  \bibitem{WZ2022} S. Wang, J. Zhou, Another proof of the boundedness of Calderón–Zygmund singular integrals on generalized Orlicz spaces, Bulletin des Sciences Mathématiques, 2022.
  
  \bibitem{H1} L. Huang,  J.  Liu,,  D. Yang,   W. Yuan, Atomic and Littlewood–Paley characterizations of anisotropic mixed-norm Hardy spaces and their applications, The Journal of Geometric Analysis, 2019.
  
 \bibitem{H2} F. Wang,  D. Yang, S. Yang,  Applications of Hardy spaces associated with ball quasi-Banach function spaces, Results in Mathematics, 2020.
  
 \bibitem{H3} D. C. Chang,   S. Wang,  D. Yang,  Y.  Zhang,  Littlewood-Paley characterizations of Hardy-type spaces associated with ball quasi-Banach function spaces, Complex Analysis and Operator Theory, 2020.
  
  \bibitem{H4}X. Yan,  D. Yang,  W. Yuan,   Intrinsic square function characterizations of Hardy spaces associated with ball quasi-Banach function spaces, Frontiers of Mathematics in China, 2020.
\bibitem{H5} K. P. Ho, Fourier-type transforms on rearrangement-invariant quasi-Banach function spaces, Glasgow Mathematical Journal,2019.
  
 \bibitem{CB1988} C. Bennett, R. Sharpley, Interpolation of Operators, Pure Appl. Math, 1988. 
 
 \bibitem{JE1972} J. E. Gilbert, Interpolation between weightedL p-spaces[J]. Arkiv för Matematik,  1972.
  \bibitem{SE1993}E. M. Stein, HarmonicAnalysis, real-variable methods, orthogonality and oscillatory integrals, Princeton
  University Press, Princeton 1993.

\bibitem{SY2017} Y. Sawano, K. P. Ho,  D. Yang, S. Yang,  Hardy spaces for ball quasi-Banach function spaces, Dissertationes mathematicae ,2017.

\bibitem{AC1961} A. Calderón, Lebesgue spaces of differentiable functions." Proc. Sympos, Pure Math, 1961.

\bibitem{LS2008} S.Z. Lu, D. Yang,  H. Guoen, Herz type spaces and their applications, Beijing: Science Press, 2008.
\bibitem{MW2021} M. Q. Wei,  A characterization of $C\dot{M}O^{\vec{q}}$ via the commutator of Hardy-typeoperators on mixed Herz spaces,  Applicable Analysis, 2021.
\bibitem{AA2012} A. Almeida,  D. Drihem,  Maximal, potential and singular type operators on Herz spaces with variable exponents, Journal of Mathematical Analysis and Applications, 2012.
\end{thebibliography}
\end{document}